\newtheorem*{remark*}{Remark}
\newcommand\backsslash{\mathbin{\backslash\mkern-6.5mu\backslash}}
\newcommand\OO{\mathcal{O}}
\newcommand\one{\mathbbm{1}}
\renewcommand{\bar}{\overline}
\numberwithin{equation}{subsection}
\newenvironment{psmatrix}
  {\left(\begin{smallmatrix}}
  {\end{smallmatrix}\right)}
\begin{document}

	\title[Endoscopic Relative Orbital Integrals on $\mathrm{U}_3$] {Endoscopic Relative Orbital Integrals on $\mathrm{U}_3$}

\author{Chung-Ru Lee}
\address{Department of Mathematics\\
Duke University\\
Durham, NC 27708}
\email{chung.ru.lee@duke.edu}
    

\subjclass[2010]{Primary 11F70;  Secondary 11F66}
\maketitle

\begin{abstract} Let $F$ be a nonarchimedean local field and consider the action of the reductive group $\mathrm{SO}_3(F)$ on the spherical variety $(\mathrm{U}_3/\mathrm{O}_3)(F)$.  
We compute the endoscopic orbital integrals of the basic function in this situation. Knowing the endoscopic orbital integrals is essential for observing the existence of transfer in this relative setting. This would be the first time such a computation has appeared in the literature for spherical varieties with type $N$-spherical roots.
\end{abstract}

\setcounter{tocdepth}{1}
\tableofcontents


\section{Introduction}\label{sec:intro}

Let $G$ be a connected reductive group over a number field $K$, and let $G_0 \leq G$ be a reductive subgroup. Denote by $\A_K$ the ring of ad\`{e}les of $K$.  We say that a cuspidal automorphic representation $\pi$ of $G(\A_K)$ is 
 \textit{distinguished} by $G_0$ if the \textit{period integral}
$$P(\varphi)=\int\limits_{G_0(K)\backslash G_0(\mathbb{A}_K)}\varphi(g)dg$$
is nonzero for some $\varphi$ in the $\pi$-isotypic subspace of automorphic forms on $G(\mathbb{A}_K)$.

\subsection{Background}
Characterization of distinguished representation is crucial for the study of automorphic representations. The celebrated conjectures of Sakellaridis and Venkatesh provide such a characterization in many cases. Particularly, they provide a thorough conjectural description of the representations of $G$ that are distinguished by $G_0$ when the groups $G$ and $G_0$ are both split and $G/G_0$ is a spherical variety --- that is, $G/G_0$ admits an open orbit under the action of a Borel subgroup of $G$ \cite{SV}.

When $G=\mathrm{GL}_n$, many cases of the conjecture are known due to the work of Jacquet and his school \cite{jacquet_2005}. However, for classical groups far less is understood. Apart from its intrinsic interest from automorphic representation theory, distinction problems for classical groups hold additional value due to its connection to algebraic cycles on certain Shimura varieties (See Remark \ref{rmk:shimura}).

Jacquet's strategy for studying distinction problems on classical groups is to use the comparisons of relative trace formulae.  This strategy has been used to fantastic effect to prove the unitary case of the Gan-Gross-Prasad conjecture \cite{beuzartplessis_2020}. The author remarks that the theory of endoscopy, which considerably complicates the comparisons of trace formulae, is absent in this instance.

Work of Getz and Wambach \cite{getz-wambach} treated a different comparison where endoscopy comes into play. Their work also suggested an informal procedure for reducing distinction problems on classical groups to distinction problems on the general linear groups.

Let us describe the framework of the Getz-Wambach comparison. Let $E/F$ be a quadratic extension of local fields. Over $F$, let $H$ be either $\mathrm{GL}_n$ or $\mathrm{Res}_{E/F}\mathrm{GL}_n$ and let $G$ be a classical group, realized as the fixed points (or the neutral component of the fixed points) of an involution $\tau$ on $H$. Moreover, let $\sigma$ be an involution on $H$ that commutes with $\tau$. It then restricts to an involution on $G$. Lastly, consider a third involution $\theta=\sigma \circ \tau$ given by composition.

Let $H^{\sigma}<H$, $H^{\theta}<H$, $G^{\sigma}<G$ be the respective subgroups which are fixed under $\sigma$ and $\theta$ accordingly. The idea of Getz and Wambach is to relate suitable local orbital integrals on the pair of quotients
$$
H^{\sigma} \backslash H/H^{\theta}\quad \textrm{ and }\quad G^{\sigma} \backslash G/G^{\sigma}
$$
and thereby prove (roughly) that for a quadratic extension $L/K$ of global fields, a representation of $G(\A_K)$ is $G^{\sigma}$-distinguished if and only if its transfer to $H(\A_K)$ with respect to a suitable $L$-map ${}^LG \longrightarrow {}^LH$ is distinguished by $H^\sigma$ and $H^{\tau}$. Note that the statements here are merely guiding principles and must be modified with caution according to the situation at hand.

After \cite{getz-wambach}, the next implementation of this strategy (with a suitable modification) appears in the forthcoming work of Xiao and Zhang. More specifically, Xiao and Zhang consider a setting where $G$ is a form of some unitary group in $2n$ variables and $G^{\sigma}$ is a product of two copies of some unitary group in $n$ variables. We refer to this as the \textit{unitary Friedberg-Jacquet case}.

In order to execute this comparison, one has to stabilize the relative trace formula that parametrizes automorphic representations of $H(\mathbb{A}_K)$ that are distinguished by $H^{\sigma}$. In the unitary Friedberg-Jacquet case, this has been accomplished largely by Leslie \cite{leslie2020endoscopic,leslie2021stabilization}.
However, his method does not obviously generalize to the setting considered in this paper. In particular, there remain many mysteries in the case where a spherical variety $G/G^\sigma$ has \textit{type $N$-spherical roots} in the sense of \cite{SV}.

When a symmetric variety $G/G^{\sigma}$ has type $N$-spherical roots, the generic stabilizer of a point in $G/G^{\sigma}(F)$ under the left action of $G$ can be disconnected, or even finite. This phenomenon creates exotic challenges to the theory.

In this paper, we step onto this unknown territory by computing endoscopic orbital integrals in a setting where type $N$-spherical roots present and the generic stabilizer is finite.

\subsection{Statement of the main theorem}
Let $E/F$ be an unramified quadratic extension of nonarchimedean local fields and let $\mathrm{Gal}\,(E/F)=\langle \theta \rangle$. We assume the residual characteristic to not be $2$. Let $\bar{F}$ be an algebraic closure of $F$ containing $E$.

The Galois automorphism $\theta$ defines canonically an automorphism of $\mathrm{Res}_{E/F}\mathrm{GL}_n$. We will also denote this automorphism by a bar above for simplicity in notation. Let $J_n$ be defined inductively by $J_1=(1)$ and
$$J_n=\begin{psmatrix}
& 1\\
J_{n-1} &
\end{psmatrix}.$$
Consider the quasi-split unitary group in $n$-variables, whose points in an $F$-algebra $R$ are given by 
\begin{align*}
G(R):=\mathrm{U}_n(R) & :=\{g\in\mathrm{Res}_{E/F}\,\mathrm{GL}_n(R)\mid J_n\overline{g}^{-t}J_n^{-1}=g\}.
\end{align*}
The group $G$ admits an involution, which can be given on its points by 
\begin{align}
\sigma(g)=J_ng^{-t}J_n^{-1}\label{eqn:defsigma}
\end{align}
and 
\begin{align*}
G^{\sigma}(R):= & \{g\in\mathrm{GL}_n(R)\mid \sigma(g)=g\}=\mathrm{O}_{n}(R).
\end{align*}
Thus $G^{\sigma}$ is a quasi-split orthogonal group.

Instead of working directly with $G^{\sigma} \backslash G$, it is convenient to work with the subscheme $\mathcal{S}\subset G$ whose points in an $F$-algebra $R$ are given by
$$
\mathcal{S}(R):=\{g \in G(R): g=\sigma(g)^{-1}\}.
$$
Then there is a natural action of $\mathrm{O}_n$ on $\mathcal{S}$ given by 
\begin{align*}
\mathcal{S}(R) \times G^\sigma(R) &\longrightarrow \mathcal{S}(R)\\
(\gamma,g) &\longmapsto g^{-1} \gamma g.
\end{align*}
Our goal is to study orbital integrals for this action.

We henceforth assume $n=3$. In this case, $G^{\sigma}=\{\pm 1\} \times \mathrm{SO}_3$ and the part $\{\pm 1\}$ acts trivially on all of $\mathcal{S}$. Thus we will integrate over the $F$-points of
$$
G_1:=\mathrm{SO}_3.
$$
instead of $G^\sigma(F)$.

We say that two elements of $\mathcal{S}(F)$ are in the same \textit{rational orbit} if they are in the same $G_1(F)$-orbit, and in the same \textit{stable orbit} if they are in the same $G_1(\bar{F})$-orbit. An element $\gamma\in\mathcal{S}(F)$ is \textit{regular} if it is regular as an element in $G(F)$.

For regular elements $\gamma \in \mathcal{S}(F)$ we study the orbital integrals
\begin{align} \label{O}
    O_{\gamma}(f):=\int\limits_{G_{1\gamma}(F) \backslash G_1(F)}f(g^{-1}\gamma g)dg
\end{align}
and weighted sums of orbital integrals, often referred to as $\kappa$-orbital integrals
\begin{align} \label{SOkap}
{SO}^{\kappa}_{\gamma}(f):=\sum_{\gamma' \sim \gamma}\kappa(\gamma')O_{\gamma'}(f).
\end{align}
Here the sum is over (a set of representatives for) the rational orbits in the stable orbit of $\gamma$ and $\kappa$ is a character of the finite group $\mathfrak{D}(F,G_{1\gamma},G_1)$ (see \S\ref{sec:orbits} for details). We are interested in those $\kappa$-orbital integrals with nontrivial $\kappa$.

We discover that these groups $\mathfrak{D}(F,G_{1\gamma},G_1)$ are nontrivial only for a particular family of stable orbits $[\gamma]_\mathrm{st}$. In fact, if $\mathfrak{D}(F,G_{1\gamma},G_1)$ is nontrivial, then $\gamma$ is in the stable orbit of some element of $\mathcal{S}(F)$ of the form
\begin{align} \label{gamma}
\begin{psmatrix}
x & & y\\
& z &\\
\nu y & & x
\end{psmatrix}
\end{align}
with $x,y,z\in E$ and $\nu \in \{1,\xi^2,\varpi,\xi^2\varpi\}$, where $\varpi$ is a uniformizer of $F$ and $\xi \in \mathcal{O}_E^\times$ satisfies $\Tr_{E/F}(\xi)=0$ (Lemma \ref{lemma:classification_n=3}). Thus it suffices to consider the endoscopic orbital integrals for $\gamma$ of such form.

Let $\gamma$ be of the form above. There is a canonical isomorphism between $\mathfrak{D}(F,G_{1\gamma},G_1)$ and 
\begin{align*}
D_\gamma=\begin{cases}
F^\times/\mathrm{N}_{E/F}(E^\times)=\{1,\varpi\} & \text{if }\nu=\xi^2,
\\F^\times/(F^\times)^2=\{1,\xi^2,\varpi,\xi^2\varpi\} & \text{if }\nu=1,\varpi\text{ or }\xi^2\varpi
\end{cases}
\end{align*}
(Lemma \ref{lemma:endoscopy}). We denote elements in $\mathfrak{D}(F,G_{1\gamma},G_1)$ by this identification to $D_\gamma$.

To define a $\kappa$-orbital integral, we start with a character on $\mathfrak{D}(F,G_{1\gamma},G_1)$. When $\nu=\xi^2$, let $\kappa_1$ be the unique nontrivial character of $\mathfrak{D}(F,G_{1\gamma},G_1)$. When $\nu=1,\varpi$ or $\xi^2\varpi$, for any $s\neq 1\in D_\gamma$, there is a unique nontrivial character of $D_\gamma$ such that $\kappa(s)=1$. We will denote the corresponding character on $\mathfrak{D}(F,G_{1\gamma},G_1)$ by $\kappa_s$.

The eigenvalues of $\gamma$, written in terms of the coordinates, will be labelled as
\begin{align}
\lambda_1:=x+{\nu}^{1/2} y, \quad 
\lambda_2:=z, \quad 
\lambda_3:=x-{\nu}^{1/2} y.\label{def_eigenvalues}
\end{align}
We let

\begin{align}\label{eqn:define_invariants}
\begin{split}
M_{ij}&:=v(\lambda_i-\lambda_j),\\
N_{ij}&:=v(\lambda_i+\lambda_j),\\
z_y&:=\frac{(\lambda_2+\lambda_3)(\lambda_2-\lambda_1)}{2yz}+\nu^{1/2}.
\end{split}
\end{align}
All of these are invariants of the $G_1(\bar{F})$-orbit of $\gamma$. We also define a quadratic symbol $\left(\frac{a}{F}\right)$ that detects whether $a\in (F^\times)^2$ (see Definition \ref{def:quad_symbol} for details).

To describe the formulae, we write
\begin{align*}
\underline{M}=\min\limits_{i<j} M_{ij},\\
\overline{M}=\max\limits_{i<j} M_{ij}.
\end{align*}

\begin{thm}
Let $\mathbbm{1}_{\mathcal{S}(\mathcal{O}_F)}$ be the characteristic function of $\mathcal{S}(\mathcal{O}_F)=\mathcal{S}(F) \cap G(\mathcal{O}_F)$. Let $\gamma$ and $\kappa_s$ be as above. Then the $\kappa_s$-orbital integrals
$$
{SO}^{\kappa_s}_{\gamma} (\mathbbm{1}_{\mathcal{S}(\mathcal{O}_F)})=0
$$
for $M_{ij}$ or $N_{ij}<0$.

When $M_{ij}$ and $N_{ij}$ are non-negative, ${SO}^{\kappa_s}_{\gamma} (\mathbbm{1}_{\mathcal{S}(\mathcal{O}_F)})$ are computed as in the following tables:
\vspace{2mm}
\noindent If $\nu=\xi^2$,
$$SO_\gamma^{\kappa_1}(\mathbbm{1}_{\mathcal{S}(\mathcal{O}_F)})=\frac{1}{2}(-1)^{M_{12}-M_{13}}\left(1+\left(\frac{z_y^2-\nu}{F}\right)\right)\frac{q^{\lceil M_{12}/2\rceil}-1}{q-1}+\frac{1}{2}(-1)^{M_{13}}.$$
If $\nu=1$, there are three choices for $\kappa=\kappa_s$:
\begin{align*}
SO^{\kappa_{\xi^2}}_\gamma(\mathbbm{1}_{\mathcal{S}(\mathcal{O}_F)})=\frac{1}{2}(-1)^{M_{12}-M_{13}}&\left(1+\left(\frac{z_y^2-\nu}{F}\right)\right)\frac{q^{\lceil \underline{M}/2\rceil}-1}{q-1}+\frac{1}{2}\left(\lfloor\frac{\overline{M}}{2}\rfloor-\lfloor\frac{M_{13}}{2}\rfloor\right)q^{\lfloor M_{13}/2\rfloor}\\
&+\frac{1}{2}(-1)^{M_{13}}
\end{align*}
and
\begin{align*}
SO^{\kappa_s}_\gamma(\mathbbm{1}_{\mathcal{S}(\mathcal{O}_F)})=\frac{1}{2}\Bigg(\kappa_s\left(\left[-2(z_y+\sqrt{z_y^2-1})\right]\right)&\left(1+\left(\frac{z_y^2-1}{F}\right)\right)\frac{q^{\lceil \underline{M}/2\rceil}-1}{q-1}\\&+\left(\frac{\pm2}{F}\right)\left(\lfloor\frac{\overline{M}}{2}\rfloor-\lfloor\frac{M_{13}}{2}\rfloor\right)q^{\lfloor M_{13}/2\rfloor}\Bigg)
\end{align*}
where the sign for $\pm2$ depends on the ordering of $M_{12}$ and $M_{23}$.

At last, if $\nu=\varpi$ or $\xi^2\varpi$,
$$SO^{\kappa_\nu}_{\gamma}(\mathbbm{1}_{\mathcal{S}(\mathcal{O}_F)})=\frac{1}{2}\left(\left(\frac{z_y^2-\nu}{F}\right)+1\right)\frac{q^{\lceil\frac{M_{12}}{2}\rceil}-1}{q-1}$$
and the weighted sum is $0$ for the other two nontrivial characters.
\end{thm}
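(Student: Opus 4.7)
The first claim, that $SO_\gamma^{\kappa_s}(\mathbbm{1}_{\mathcal{S}(\mathcal{O}_F)})=0$ whenever some $M_{ij}$ or $N_{ij}$ is negative, follows immediately from the support condition: any $\gamma'\in\mathcal{S}(\mathcal{O}_F)\cap G(\mathcal{O}_F)$ has integral eigenvalues, so $v(\lambda_i\pm\lambda_j)\geq 0$. Since $M_{ij}$ and $N_{ij}$ are stable invariants, if any of them is negative then no element of the stable orbit of $\gamma$ meets $\mathcal{S}(\mathcal{O}_F)$, and every summand of \eqref{SOkap} vanishes.

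For the explicit formulae the plan is three-fold. First, using Lemma \ref{lemma:endoscopy} I would choose explicit representatives $\{\gamma_d\}_{d\in D_\gamma}$ for the rational orbits inside the stable orbit of $\gamma$, constructed by twisting the entry $y$ (or the outer block) by $d$, so that the action of $\kappa_s$ on each representative is transparent, and write
$$SO^{\kappa_s}_\gamma(\mathbbm{1}_{\mathcal{S}(\mathcal{O}_F)})=\sum_{d\in D_\gamma}\kappa_s(d)\,O_{\gamma_d}(\mathbbm{1}_{\mathcal{S}(\mathcal{O}_F)}).$$
Second, for each $\gamma_d$ I would compute $O_{\gamma_d}$: the centralizer $G_{1,\gamma_d}$ is a one-dimensional torus splitting over $F(\sqrt{z_y^2-\nu})$, so I would parametrize $G_{1,\gamma_d}(F)\backslash G_1(F)$ by a Bruhat or Iwasawa section of $SO_3$ adapted to this torus, push $\gamma_d$ through the parametrization, and reduce the integrality condition $g^{-1}\gamma_d g\in\mathcal{S}(\mathcal{O}_F)$ to a finite system of valuation inequalities in the coordinates of $g$ and in $\lambda_i\pm\lambda_j$. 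The volume of each resulting stratum is a geometric sum in $q$ whose length is governed by one of the $M_{ij}$ or $N_{ij}$; the floors and ceilings of $M_{ij}/2$ reflect parity constraints specific to $SO_3$ (as opposed to $GL_3$), and the quadratic symbol $\left(\frac{z_y^2-\nu}{F}\right)$ appears as the indicator that the centralizer torus is split over $F$, which controls whether certain strata contribute at all.

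The main obstacle will be the third step: assembling the weighted sum and matching the character data to the signs and symbols in the statement. Tracking the effect of each twist $d\in D_\gamma$ on the individual volume computation, so that the character values $\kappa_s(d)$, the signs $(-1)^{M_{ij}}$, and the symbols $\left(\frac{\pm2}{F}\right)$ and $\kappa_s\bigl([-2(z_y+\sqrt{z_y^2-1})]\bigr)$ all appear with the correct coefficients, is delicate; in particular, the sign of $\pm 2$ in the $\nu=1$ case should trace back to the ordering of $M_{12}$ and $M_{23}$ via which eigenvalue pair supplies the relevant norm, and the vanishing of the two "wrong" characters for $\nu\in\{\varpi,\xi^2\varpi\}$ should fall out of a cancellation between paired strata. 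Each of the four cases $\nu\in\{\xi^2,1,\varpi,\xi^2\varpi\}$ will have to be handled separately because the arithmetic of $D_\gamma$, the splitting behaviour of the centralizer torus, and the explicit form of the $\gamma_d$ all change with $\nu$; the bulk of the work is this combinatorial bookkeeping rather than any single hard estimate.
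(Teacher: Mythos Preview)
Your high-level strategy---choose explicit representatives $\gamma_d$ for the rational orbits, compute each $O_{\gamma_d}(\mathbbm{1}_{\mathcal{S}(\mathcal{O}_F)})$, then take the character sum---is exactly the route the paper follows (Propositions \ref{prop:main1}, \ref{prop:main2}, \ref{prop:main3} give the individual $O_{\gamma_\mu}$, and Corollaries \ref{cor:kappa1}, \ref{cor:kappa2}, \ref{cor:kappa3} assemble the $\kappa$-sums). Your argument for vanishing when some $M_{ij}$ or $N_{ij}$ is negative is also correct and matches Lemma \ref{lem:triv:case}.

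However, there is a genuine error in your second step. You write that ``the centralizer $G_{1,\gamma_d}$ is a one-dimensional torus splitting over $F(\sqrt{z_y^2-\nu})$'' and propose to parametrize $G_{1,\gamma_d}(F)\backslash G_1(F)$ by a section adapted to this torus. In fact $G_{1\gamma}$ is \emph{not} a torus at all: it is the finite \'etale group scheme $T_\nu\cap\mathrm{SO}_3\cong(\mathbb{Z}/2\mathbb{Z})^2$ (Lemma \ref{lemma:stabilizer}). This is precisely the ``type $N$-spherical root'' phenomenon highlighted in the introduction: the generic stabilizer for the $\mathrm{SO}_3$-action on $\mathcal{S}$ is finite, not a maximal torus, because $\gamma$ lives in $\mathrm{U}_3$ rather than in $\mathrm{SO}_3$ and the intersection of its $\mathrm{U}_3$-centralizer with $\mathrm{SO}_3$ drops dimension to zero. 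Consequently the orbital integral is simply $\tfrac{1}{\#G_{1\gamma}(F)}$ times an integral over all of $G_1(F)$, and the paper proceeds via the standard Iwasawa decomposition $G_1(F)=N(F)A(F)\mathcal{K}$ with coordinates $(u,t)\in F\times F^\times$ (see \S\ref{sec:OI}, especially Proposition \ref{prop:elementary}). There is no torus quotient to adapt to.

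Relatedly, your interpretation of the quadratic symbol $\left(\tfrac{z_y^2-\nu}{F}\right)$ as detecting whether ``the centralizer torus is split over $F$'' is off for the same reason. In the paper this symbol arises purely from solving the quadratic congruence $(u^2+2\mu\nu^{-1}z_y)^2\equiv 4\mu^2\nu^{-2}(z_y^2-\nu)$ coming from the integrality conditions (see e.g.\ \eqref{eqn:Y_I_1_result}): it governs whether the quartic in $u$ factors over $F$, which in turn decides whether certain valuation strata in the $(u,t)$-domain are nonempty. The floors, ceilings, and case splits you anticipate are indeed present, but they come from this congruence analysis over the Iwasawa coordinates, not from any torus splitting.
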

\noindent This main result is the combination of Corollaries \ref{cor:kappa1}, \ref{cor:kappa2}, and \ref{cor:kappa3}. For more detail, one should refer to those corollaries.

The expectation is that these endoscopic orbital integrals are related, by a transfer factor, to stable orbital integrals on simpler symmetric spaces. Establishing such connection is the subject of the author's current research. We want to point out that this paper does the majority of the work in this direction, as orbital integrals on symmetric spaces smaller than $\mathcal{S}$ are extremely simple in structure. We also compute orbital integrals for every $\gamma$ in the stable orbit of an element of the form \eqref{gamma} above (see Corollary \ref{coro:rational_orbits}). We invite the reader to compare the formulae above with the individual formulae for each $O_{\gamma}(\mathbbm{1}_{\mathcal{S}(\mathcal{O}_F)})$ given in Theorems \ref{prop:main1}, \ref{prop:main2}, and \ref{prop:main3}.

\subsection{A remark on relation to the cohomology of Shimura varieties}\label{rmk:shimura}
The distinction problem we are addressing in this paper is of geometric interest due to its connection to algebraic cycles on Shimura varieties. Now we elaborate on this point.

For the moment, we change notation and let $K=\mathbb{Q}$. Let $L/\mathbb{Q}$ be an imaginary quadratic extension.  One can choose reductive algebraic groups $G$ and $H$ over $\mathbb{Q}$ such that
$$\mathrm{Lie}\, G^\mathrm{der} =\mathrm{Lie}\,\mathrm{U}(2,n)\quad\text{ and }\quad
\mathrm{Lie}\, H^\mathrm{der} =\mathrm{Lie}\,\mathrm{O}(2,n),
$$
and Shimura data $(H,X_H)$ and $(G,X_G)$ so that there exists a map $H\hookrightarrow G$ that sends $X_H$ to $X_G$.

The dimension of the Shimura variety $\mathrm{Sh}(H,X_H)$ is $n$ and the dimension of $\mathrm{Sh}(G,X_G)$ is $2n$
(see \cite[Page 518, Table V]{helgason_2001} for details).  Thus $\mathrm{Sh}(H,X_H)$ defines a special cycle on $\mathrm{Sh}(G,X_G)$ in middle degree.   This is an interesting cycle to study from the point of view of the Tate conjecture \cite[\S 15.6, 15.8]{getz-hahn}.

It is a natural approach to try understanding it using a comparison of relative trace formulae, as suggested earlier in the introduction. Studying the $\kappa$-orbital integrals computed in this paper and their analogues in higher ranks is a necessary first step in this process.

\subsection{Outline of the paper}
We close the introduction by outlining the sections of this article.

We classify the stable orbits that contain multiple rational orbits in \S \ref{sec:orbits}. At the same time, we also construct a group $D_\gamma$ to parametrize the rational orbits explicitly. In \S \ref{sec:OI}, we simplify the computation of orbital integrals to computing iterated integrals, taken over the product of $F^\times$ and an affine space over $F$. The computation would eventually depend on
\begin{itemize}
    \item solving quadratic congruences, and
    \item counting problem arising from combinatorial data.
\end{itemize}
Those are then executed in \S\ref{sec:main1}, \S\ref{sec:main2} and \S\ref{sec:main3} for the separate cases. In particular, these cases in consideration are determined by our classification of tori from \S\ref{sec:orbits}.

\section*{Acknowledgments}
The author would like to express his deepest gratitude to his advisor Jayce R. Getz for suggesting this topic and for his guidance plus incalculable supports. The author expresses his thanks to Heekyoung Hahn for her patience in proofreading this paper and her advice. The author also thanks Spencer Leslie for several illuminating conversation throughout these years.

\section{Classification of regular orbits in the symmetric space}\label{sec:orbits}

\subsection{Selected Notation}
To set up, let $F$ be a local field of characteristic $0$ and residue characteristic $p\neq 2$. We write $\mathcal{O}_F$ for its valuation ring, $\varpi_F$ for a chosen uniformizer of $\mathcal{O}_F$, and $k_F$ for its residue field ${\mathcal{O}_F}/{\varpi_F\mathcal{O}_F}$. Let $q_F=\#k_F$, we then normalize the norm on $F$ so that $|\varpi_F|_F=q_F^{-1}$.

Let $E/F$ be an unramified quadratic extension of local fields with a cyclic Galois group $\mathrm{Gal}\,(E/F)=\left<\theta\right>$. Since $E/F$ is unramified, we may take $\varpi_E=\varpi_F$. This choice of uniformizer will be denoted simply by $\varpi$ from now on. Note that $q_F=q_E^{1/2}$. We will write this number as $q$.

We denote the \textit{norm} $\mathrm{N}_{E/F}(x)=\mathrm{N}(x)=x\theta(x)$ and the \textit{trace} $\Tr_{E/F}(x)=\Tr(x)=x+\overline{x}$ for any $x\in E$. Fix a generator $\xi\in\mathcal{O}_E^\times$ for $E=F[\xi]$ satisfying $\Tr_{E/F}\xi=0$. Also, for any $x\in E$, let $v(x)$ be its valuation.

For any local field $F_\circ$ and $x\in F_\circ$, the \textit{leading coefficient} of $x$ refers to the image of $\varpi_{F_\circ}^{-v(x)}x$ under the quotient map by $\left<\varpi_{F_\circ}\right>$ (which will be regarded as an element of $k_{F_\circ}$).

For brevity, we often write
$$
g^{\ast}=g^{-\sigma}.
$$
Note that it is not a homomorphism, but an anti-involution.

\subsection{Stable orbits in the symmetric space}
Recall that
$$
\mathcal{S}(R):=\{g \in G(R): g=g^\ast\}.
$$
In this section, we study regular stable orbits in $\mathcal{S}$ under the action of $G_1$. Also, we parametrize the rational orbits inside such stable orbits.

\begin{lem}\label{lemma:isomorphism_symmetric}
The natural map
\begin{align}
{G_1}\backsslash{\mathcal{S}} \longrightarrow {G}\backsslash{G}
\end{align}\label{eqn:isomorphism_symmetric}induced by the inclusion $\mathcal{S} \longrightarrow G$ 
is an isomorphism. Here the quotients are the geometric invariant theoretic quotients with respect to the adjoint action.
\end{lem}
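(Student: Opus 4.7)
The plan is to reduce the geometric isomorphism to an identification of invariant rings, via the standard theory of symmetric varieties. Since both quotients are affine $F$-schemes of finite type, it suffices to show that the induced restriction map on global sections
\[
\mathcal{O}(G)^{G} \longrightarrow \mathcal{O}(\mathcal{S})^{G_1}
\]
is an isomorphism of $F$-algebras; by faithfully flat descent I may check this after base change to $\bar F$. Before invoking general machinery I would record two simplifications. First, since $G^\sigma = \{\pm 1\}\times G_1$ with the central factor acting trivially on $\mathcal{S}$, one has $\mathcal{O}(\mathcal{S})^{G_1} = \mathcal{O}(\mathcal{S})^{G^\sigma}$. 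Second, because $\sigma$ restricts to the identity on $G^\sigma$, the plain conjugation action of $G^\sigma$ on $\mathcal{S}$ literally coincides with the twisted conjugation $g\cdot x = g x \sigma(g)^{-1}$; in particular their rings of invariants agree.

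These reductions place us in the standard setup for symmetric varieties. Via the Cartan-style map $G/G^\sigma\to \mathcal{S}$ sending $gG^\sigma\mapsto g\sigma(g)^{-1}$, the twisted conjugation action becomes left translation, and Richardson's theorem for symmetric varieties then yields
\[
\mathcal{O}(\mathcal{S})^{G^\sigma} \;\cong\; \mathcal{O}(T^{-})^{W_\sigma},
\]
where $T^{-}\subset G_{\bar F}$ is a maximal $\sigma$-split torus and $W_\sigma=N_{G^\sigma}(T^{-})/Z_{G^\sigma}(T^{-})$ is the little Weyl group. This is to be paired with Chevalley's classical $\mathcal{O}(G)^G = \mathcal{O}(T)^W$ for any maximal torus $T\subset G_{\bar F}$, so the lemma reduces to a purely structural comparison of these two polynomial rings of invariants.

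That comparison succeeds because the symmetric pair $(\mathrm{GL}_3,\mathrm{O}_3)$ has full rank: a maximal $\sigma$-split torus of $G_{\bar F}$ is already a maximal torus of $G_{\bar F}$, and the little Weyl group $W_\sigma$ coincides with the full Weyl group $W = S_3$. I expect this structural input to be the main, and essentially only, obstacle in a self-contained write-up. However, for $n=3$ it is elementary to verify by hand: over $\bar F$ one can diagonalize the form $J$, conjugating $\sigma$ to the standard involution $g\mapsto g^{-t}$, for which the diagonal torus is $\sigma$-split of dimension three and whose normaliser in $\mathrm{O}_3(\bar F)$ surjects onto $S_3$. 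With this in hand the two rings of invariants agree canonically, and descent from $\bar F$ back to $F$ is automatic since the map in the statement is already defined over $F$.
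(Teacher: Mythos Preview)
Your proof is correct and follows essentially the same route as the paper: pass to $\bar{F}$, conjugate $\sigma$ to the standard involution $g\mapsto g^{-t}$ on $\mathrm{GL}_3$, apply Chevalley restriction together with Richardson's generalization for symmetric varieties, and then reduce to the observation that the little Weyl group $W_\sigma$ coincides with the full $S_3$ because the diagonal torus is simultaneously maximal and $\sigma$-split. The paper's proof is slightly terser, invoking Richardson's \cite[Corollary 11.5]{Richardson} directly for $G_1\backsslash\mathcal{S}$ without your intermediate remarks on $G^\sigma$ versus $G_1$ and on twisted versus ordinary conjugation, but these extra observations of yours are harmless clarifications rather than a different strategy.
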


For the proof and later, we recall that a $\sigma$-\textit{split torus} in $\mathcal{S}$ is a torus $T \subset \mathcal{S}$ such that for all $F$-algebras $R$ and  $\gamma \in T(R)$ one has $\gamma=\gamma^\ast.$  A $\sigma$-split torus is \textit{maximal} if it is maximal among $\sigma$-split tori in $\mathcal{S}.$

Maximal $\sigma$-split tori always exist \cite[\S 2]{Richardson}. Be aware that in his paper, Richardson uses $\sigma$\textsl{-anisotropic tori} to refer to $\sigma$-split tori.

\begin{proof}
It suffices to verify that the map is an isomorphism over $\overline{F}.$ We have an isomorphism
$$
G_{\bar{F}} \overset{\sim}{\longrightarrow} \mathrm{GL}_{3\bar{F}}
$$
intertwining $\sigma$ with the involution $\sigma'(g):=g^{-t}$.  In $\mathrm{GL}_{3}$, the maximal torus $T$ of diagonal matrices is a maximal torus and is $\sigma'$-split. Temporarily, we write ${\mathrm{SO}_3}'$ for the neutral component of the fixed points of $\sigma'$ acting on $\mathrm{GL}_3.$

Let $W(T,{\mathrm{SO}_3}'):=N_{{\mathrm{SO}_3}'}(T)/Z_{{\mathrm{SO}_3}'}(T)$ and $W(T,\mathrm{GL}_3):=N_{\mathrm{GL}_3}(T)/Z_{\mathrm{GL}_3}(T)$ be the respective Weyl groups. These are constant finite group schemes over $\bar{F}.$  

By Chevalley Restriction Theorem and its generalization due to Richardson \cite[Corollary 11.5]{Richardson}, we have a commutative diagram
\begin{center}
\begin{tikzcd}
{W(T,{\mathrm{SO}_3}')}\backsslash{T} \arrow[d,"\rotatebox{90}{$\sim$}"]\ar[r]& {W(T,\mathrm{GL}_3)}\backsslash{T} \arrow[d,"\rotatebox{90}{$\sim$}"]\\
{\mathrm{SO}_3}\backsslash{\mathcal{S}} \ar[r] & {\mathrm{GL}_3}\backsslash{\mathrm{GL}_3}.
\end{tikzcd}
\end{center}
Since there exists a set of representatives for $W(T,\mathrm{GL}_3)(\bar{F})$ in ${\mathrm{SO}_3}'(\bar{F})$, we see that the groups $W(T,{\mathrm{SO}_3}')$ and $W(T,\mathrm{GL}_3)$ are indeed isomorphic. Thus, the isomorphism in (\ref{eqn:isomorphism_symmetric}) follows from the diagram above.
\end{proof}

For any $\gamma \in G(F)$, we let
\begin{align*}
G_\gamma(R) & :=\{g \in G(R)\mid g^{-1} \gamma g=\gamma\},\\
G_{1\gamma}(R) & :=G_1(R)\cap G_\gamma(R)
\end{align*}
be the stabilizer for the adjoint action.

For any $\gamma\in\mathcal{S}(F)$, we say that $\gamma$ is regular (resp.~semisimple) if it is regular (resp.~semisimple) as an element of $G(F)$. Assume $\gamma$ is a regular semisimple element in $G(F)$. By definition $G_{\gamma}^{\circ}$ will be a maximal torus in $G$.  We denote by $[\gamma]_{\mathrm{st}}$ the stable class of $\gamma$. That is, $[\gamma]_\mathrm{st}$ is the set of all elements of $\mathcal{S}({F})$ that are in the $G_1(\bar{F})$-orbit of $\gamma\in\mathcal{S}(F)$.

\subsection{Related cohomological data}
The inclusion $G_{1\gamma} \hookrightarrow G_1$ induces a map of Galois cohomology pointed sets 
\begin{align*}
    H^1(F,G_{1\gamma}) \longrightarrow H^1(F,G_1).
\end{align*}

\begin{defn}
We define the set
\begin{align}
 \mathfrak{D}(F,G_{1\gamma},G_1):=\ker[H^1(F,G_{1\gamma})\rightarrow H^1(F,G_1)]
\end{align}
to be classes in $H^1(F,G_{1\gamma})$ mapping to the neutral element of $H^1(F,G_{1})$.
\end{defn}

Recall the following well-known lemma \cite[\S 3.1]{rogawski_1990}:
\begin{lem}
$\mathfrak{D}(F,G_{1\gamma},G_1)$ parametrizes the rational orbits inside $[\gamma]_\mathrm{st}$.  Explicitly, the inverse image of the coboundary $\sigma \mapsto g\sigma(g^{-1})$ for $g \in G_1(\overline{F})$ is $g^{-1}\gamma g$.\qed
\end{lem}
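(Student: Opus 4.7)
The plan is to exhibit a bijection between the rational orbits inside $[\gamma]_{\mathrm{st}}$ and the pointed set $\mathfrak{D}(F,G_{1\gamma},G_1)$, matching the explicit coboundary recipe stated. I will use $\tau$ for a Galois element to avoid collision with the involution $\sigma$. The general shape of the argument is standard nonabelian Galois descent, essentially as in \cite[\S 3.1]{rogawski_1990}.

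First I would set up the map from orbits to cohomology. Given $\gamma' \in [\gamma]_{\mathrm{st}}(F)$, pick $g \in G_1(\bar F)$ with $\gamma' = g^{-1}\gamma g$. Applying $\tau$ to this identity and using that both $\gamma$ and $\gamma'$ are $F$-rational shows that $c(\tau) := g\tau(g)^{-1}$ centralizes $\gamma$, hence lies in $G_{1\gamma}(\bar F)$, and a routine computation shows $c \in Z^1(F,G_{1\gamma})$. By construction $c$ is a coboundary when pushed forward to $G_1$, so its class lies in $\mathfrak{D}(F,G_{1\gamma},G_1)$. I would then check that this class is unchanged if one (i) replaces $g$ by $hg$ for $h \in G_{1\gamma}(\bar F)$ (the only ambiguity once $\gamma'$ is fixed), which alters $c$ by a coboundary in $G_{1\gamma}$, or (ii) replaces $\gamma'$ by $h_0^{-1}\gamma' h_0$ for $h_0 \in G_1(F)$, which replaces $g$ by $gh_0$ and leaves $c$ literally unchanged because $\tau(h_0)=h_0$.

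Next I would construct the inverse and verify bijectivity. Given a class in $\mathfrak{D}$, lift to $c \in Z^1(F,G_{1\gamma})$ whose image in $Z^1(F,G_1)$ is the coboundary of some $g \in G_1(\bar F)$, and set $\gamma' := g^{-1}\gamma g$. A short direct calculation (using $\tau(g) = c(\tau)^{-1}g$ and the fact that $c(\tau)$ centralizes $\gamma$) gives $\tau(\gamma') = \gamma'$ for every $\tau$, and $\sigma(\gamma')={\gamma'}^{-1}$ follows from $g \in G_1$; hence $\gamma' \in \mathcal{S}(F)$, is visibly in $[\gamma]_{\mathrm{st}}$, and is sent back to the starting class. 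For injectivity of the original map, if $\gamma'_1,\gamma'_2$ with conjugators $g_1,g_2$ produce cohomologous cocycles, modifying $g_1$ on the left by an element of $G_{1\gamma}(\bar F)$ achieves $g_1\tau(g_1)^{-1} = g_2\tau(g_2)^{-1}$ for every $\tau$, whence $g_2^{-1}g_1 \in G_1(F)$ conjugates $\gamma'_1$ to $\gamma'_2$.

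The argument is entirely formal and presents no real obstacle. The two places to be careful are the placement of inverses in the cocycle/coboundary formulas (to land exactly on the recipe $\tau \mapsto g\tau(g)^{-1}$ with preimage $g^{-1}\gamma g$ as stated) and the fact that $G_{1\gamma}$ need not be connected; since $H^1(F,-)$ is taken here as a pointed set, this causes no issue beyond handling nonabelian cohomology throughout.
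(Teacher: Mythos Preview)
Your argument is correct and is exactly the standard Galois-descent proof; the paper itself does not give a proof but simply records the lemma as well-known with a reference to \cite[\S 3.1]{rogawski_1990}, which is the same source you invoke. There is nothing to compare: your write-up \emph{is} the proof behind that citation.
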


In general, the set $\mathfrak{D}(F,G_{1\gamma},g_1)$ might not have an a priori group structure. However, in our setting we have
\begin{lem}
$\mathfrak{D}(F,G_{1\gamma},G_1)$ is an abelian subgroup of $H^1(F,G_{1\gamma})$.
\end{lem}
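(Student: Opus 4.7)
The strategy has two ingredients: verifying that $G_{1\gamma}$ is commutative (so that $H^1(F,G_{1\gamma})$ carries an abelian group structure), and then verifying that $\mathfrak{D}(F,G_{1\gamma},G_1)$ is closed under this group operation.

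Since $\gamma$ is regular semisimple in $G$, the centralizer $G_\gamma$ is a maximal $F$-torus $T$ of $G$ (in particular it is connected, so $G_\gamma=G_\gamma^\circ=T$). Thus $G_{1\gamma}=G_1\cap T$ is a closed subgroup scheme of the torus $T$, and is therefore commutative. Consequently $H^1(F,G_{1\gamma})$ carries the canonical abelian group structure, whose product is given on cocycles by pointwise multiplication (which is again a cocycle because $G_{1\gamma}$ is abelian).

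For closure, I would use the cocycle description from the preceding lemma: a class in $\mathfrak{D}$ is represented by $c_i(\tau)=g_i\tau(g_i)^{-1}\in G_{1\gamma}(\bar F)$ with $g_i\in G_1(\bar F)$ and $g_i^{-1}\gamma g_i\in\mathcal{S}(F)$. Given two such classes, I want to realize the product cocycle $\tau\mapsto c_1(\tau)c_2(\tau)$ as $g_3\tau(g_3)^{-1}$ for some $g_3\in G_1(\bar F)$. My plan is to factor through the maximal torus: the inclusion $G_{1\gamma}\hookrightarrow T$ induces a homomorphism of abelian groups $H^1(F,G_{1\gamma})\to H^1(F,T)$ under which $\mathfrak{D}$ lands in $\ker[H^1(F,T)\to H^1(F,G)]$, which is a subgroup of the abelian group $H^1(F,T)$ (a standard fact for maximal tori in connected reductive groups, via Kottwitz-type abelianized cohomology). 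This at least places $\mathfrak{D}$ inside an abelian subgroup of $H^1(F,G_{1\gamma})$.

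The main obstacle I foresee is promoting ``$\mathfrak{D}$ is contained in an abelian subgroup'' to ``$\mathfrak{D}$ is itself a subgroup.'' Concretely, the naive candidate $g_3=g_1g_2$ yields
$$
g_3\tau(g_3)^{-1}\;=\;g_1\,c_2(\tau)\,g_1^{-1}\cdot c_1(\tau),
$$
which equals $c_1(\tau)c_2(\tau)$ precisely when $g_1$ commutes with $c_2(\tau)\in T(\bar F)$. Since $G_1=\mathrm{SO}_3$ is nonabelian and $g_1$ need not normalize $T$, this is not automatic. I expect the final step to exploit either the freedom to modify each $g_i$ within its $G_{1\gamma}(\bar F)$-coset, or else the explicit form \eqref{gamma} of $\gamma$, in order to produce a better $g_3$; I view this as the principal technical point of the proof.
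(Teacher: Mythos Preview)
Your first step is correct and matches the paper: $G_{1\gamma}$ sits inside the maximal torus $G_\gamma$, hence is commutative, so $H^1(F,G_{1\gamma})$ is an honest abelian group.

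However, your closure argument has a genuine gap, and you have correctly located it yourself. The cocycle manipulation with $g_3=g_1g_2$ fails for the reason you give, and neither of your proposed rescues works. Modifying $g_i$ by elements of $G_{1\gamma}(\bar F)$ does not help: such modifications change $c_i$ by a coboundary but leave the conjugation obstruction $g_1 c_2(\tau) g_1^{-1}$ intact, since $g_1$ itself is only determined up to right multiplication by $G_{1\gamma}(\bar F)$ and need not normalize $T$. Your detour through $T\hookrightarrow G$ is also inconclusive: the map $H^1(F,G_{1\gamma})\to H^1(F,T)$ is not injective in general (here $G_{1\gamma}$ is a finite group scheme and $T$ a torus), so knowing that the \emph{image} of $\mathfrak D$ lies in a subgroup of $H^1(F,T)$ tells you nothing about $\mathfrak D$ as a subset of $H^1(F,G_{1\gamma})$. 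Since $H^1(F,G_{1\gamma})$ is already abelian, the statement ``$\mathfrak D$ is contained in an abelian subgroup'' is vacuous.

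The paper resolves the problem with exactly the tool you mention in passing but apply to the wrong map: Borovoi's abelianized cohomology, applied directly to $G_{1\gamma}\to G_1$. One has a commutative square
\[
\begin{tikzcd}
H^1(F,G_{1\gamma}) \arrow[r] \arrow[d] & H^1(F,G_1) \arrow[d] \\
H^1_{\mathrm{ab}}(F,G_{1\gamma}) \arrow[r] & H^1_{\mathrm{ab}}(F,G_1)
\end{tikzcd}
\]
in which the bottom row is a homomorphism of abelian groups and, over a nonarchimedean field, both vertical abelianization maps are bijections. The left vertical map is moreover a group isomorphism because $G_{1\gamma}$ is abelian. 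Thus $\mathfrak D$ is identified, via a group isomorphism, with the kernel of a group homomorphism, and is therefore a subgroup. No explicit cocycle splicing is needed.
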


\begin{proof}
We have a commutative diagram
\begin{equation}
\begin{tikzcd}
H^1(F,G_{1\gamma}) \arrow[r] \arrow[d,"\rotatebox{90}{$\sim$}"] &  H^1(F,G_1) \arrow[d]\\
H^1_{\mathrm{ab}}(F,G_{1\gamma}) \arrow[r] & H^1_{\mathrm{ab}}(F,G_1).
\end{tikzcd}
\end{equation}
\noindent Here the vertical arrows are the abelianization maps, which are bijective when $F$ is nonarchimedean \cite[Corollary 5.4.1]{borovoi_1998}.  Moreover the vertical arrow on the left is a group isomorphism since $G_{1\gamma}$ is abelian. The lemma then follows.
\end{proof}

Let $\gamma \in\mathcal{S}(F)$ be regular semisimple.  Let $\mathrm{O}_{3\gamma}$
be the intersection of the stabilizer $G_{\gamma}$ of $\gamma$ in $G$ with $G^\sigma=\mathrm{O}_{3}$. Then by passage to the algebraic closure we know that
$$
G_{\gamma}[2]=\mathrm{O}_{3\gamma},
$$
where the $[2]$ notation following a group denotes the $2$-torsion of that group. Furthermore, we deduce that
\begin{align*}
G_{\gamma}[2]=\{ \pm \mathbbm{1}_3 \} \times G_{1\gamma}.
\end{align*}
There is an exact sequence of algebraic $F$-groups
$$
1 \longrightarrow G_{\gamma}[2] \longrightarrow G_{\gamma} \overset{[2]}{\longrightarrow} G_{\gamma} \longrightarrow 1,
$$
where the arrow captioned by $[2]$ is the doubling map. Taking Galois cohomology, this gives rise to an exact sequence
\begin{align} \label{exact}
    1 \longrightarrow G_{\gamma}(F)/G_{\gamma}(F)^2 \longrightarrow H^1(F,G_{\gamma}[2]) \longrightarrow H^1(F,G_{\gamma}) \overset{[2]}{\longrightarrow} H^1(F,G_{\gamma})
\end{align}
where the last arrow is induced by the doubling map. Now $G_\gamma$ is a maximal torus of a unitary group and hence is isomorphic to one of the following \cite[\S 3.6]{rogawski_1990}:
\begin{enumerate}
\item[I.] $\mathrm{Res}_{E/F}\,\mathbb{G}_\mathrm{m} \times\mathrm{U}_{1,E/F}$,
\item[II.] $\mathrm{U}_{1,E/F}\times\mathrm{U}_{1,E/F}\times\mathrm{U}_{1,E/F}$,
\item[III.] $\mathrm{Res}_{F_\circ/F}\,\mathrm{U}_{1,E_\circ/F_\circ}\times\mathrm{U}_{1,E/F}$ with $F_\circ/F$ quadratic, $F_\circ\neq E$, and $E_\circ=EF_\circ$,
\item[IV.] $\mathrm{Res}_{F_\circ/F}\,\mathrm{U}_{1,E_\circ/F_\circ}$ with $F_\circ/F$ cubic and $E_\circ=EF_\circ$.
\end{enumerate}

\noindent Here $\mathrm{U}_{1,F_2/F_1}$ is the absolute rank $1$ unitary group attached to the quadratic extension $F_2/F_1$. We will refer to these tori as type I-IV.

Using the long exact sequence in Galois cohomology attached to the sequence 
\begin{align} \label{exact:seq}
1 \longrightarrow \mathrm{U}_{1,F_2/F_1} \longrightarrow \mathrm{Res}_{F_2/F_1}\mathbbm{G}_m \overset{\mathrm{N}}{\longrightarrow} \mathbbm{G}_m \longrightarrow 1,
\end{align}
one computes that $H^1(F_1,\mathrm{U}_{1,F_2/F_1})=\mathbb{Z}/2\mathbb{Z}$. Therefore
\begin{align*}
H^1(F,G_\gamma)=\begin{cases} \mathbb{Z}/2\mathbb{Z} & \textrm{ in type I},\\
(\mathbb{Z}/2\mathbb{Z})^3 & \textrm{ in type II},\\
(\mathbb{Z}/2\mathbb{Z})^2 & \textrm{ in type III},\\
\mathbb{Z}/2\mathbb{Z} & \textrm{ in type IV}.
\end{cases}
\end{align*}
In particular, we conclude that $H^1(F,G_\gamma)$ is $2$-torsion. It implies that the last arrow in \eqref{exact} is trivial, and from this we deduce an exact sequence
\begin{align}\label{eqn:2-torsion}
   1 \longrightarrow G_{\gamma}(F)/G_{\gamma}(F)^2 \overset{\delta}{\longrightarrow} H^1(F,G_{\gamma}[2]) \longrightarrow H^1(F,G_\gamma) \longrightarrow 1
\end{align}
The connecting homomorphism $\delta$ is given explicitly as follows. For any $g \in G_{\gamma}(F),$ choose an $h \in G_{\gamma}(\bar{F})$ such that $h^2=g$. Then $\delta$ associate to $g$ the cocycle $\sigma \mapsto h^{-1}\sigma(h)$ \cite[\S I.5.4]{Serre}.

\begin{lem}\label{lemma:quadratic}
Let $F$ be a nonarchimedean local field. Then
$$H^1(F,\mathbb{Z}/2\mathbb{Z})\cong(\mathbb{Z}/2\mathbb{Z})^2.$$
Here $\mathbb{Z}/2\mathbb{Z}$ is regarded as a $\mathrm{Gal}\,(\overline{F}/F)$-module with trivial action. In particular, we know that $H^1(F,\mathbb{Z}/2\mathbb{Z})$ is generated by the cocycles $\sigma\mapsto\frac{\sigma(\varpi^{1/2})}{\varpi^{1/2}}$ and $\sigma\mapsto\frac{\sigma(\xi)}{\xi}$.
\end{lem}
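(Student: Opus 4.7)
The plan is to reduce the computation of $H^1(F,\mathbb{Z}/2\mathbb{Z})$ to understanding $F^\times/(F^\times)^2$ via Kummer theory, and then to use the explicit structure of the unit group of a nonarchimedean local field of odd residue characteristic.

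First I would invoke the Kummer short exact sequence
\begin{equation*}
1 \longrightarrow \mu_2 \longrightarrow \bar{F}^\times \overset{x\mapsto x^2}{\longrightarrow} \bar{F}^\times \longrightarrow 1.
\end{equation*}
Since $\mathrm{char}\,k_F\neq 2$, the group $\mu_2=\{\pm 1\}$ is contained in $F$, and the Galois action on it is trivial; hence $\mu_2\cong \mathbb{Z}/2\mathbb{Z}$ as Galois modules. Applying Hilbert 90 to the long exact sequence in Galois cohomology yields the canonical isomorphism
\begin{equation*}
F^\times/(F^\times)^2 \overset{\sim}{\longrightarrow} H^1(F,\mathbb{Z}/2\mathbb{Z}),
\end{equation*}
under which the class of $a\in F^\times$ goes to the cocycle $\sigma\mapsto \sigma(\sqrt{a})/\sqrt{a}$, where $\sqrt{a}$ is any square root of $a$ in $\bar{F}$.

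Next I would compute $F^\times/(F^\times)^2$ directly. The choice of uniformizer $\varpi$ gives a decomposition $F^\times\cong \mathbb{Z}\times\mathcal{O}_F^\times$, while the reduction map fits $\mathcal{O}_F^\times$ into the exact sequence $1\to 1+\varpi\mathcal{O}_F\to\mathcal{O}_F^\times\to k_F^\times\to 1$. Because the residue characteristic is odd, Hensel's lemma shows that the principal units $1+\varpi\mathcal{O}_F$ are $2$-divisible, so squaring is surjective on this subgroup. Since $k_F^\times$ is cyclic of even order, $k_F^\times/(k_F^\times)^2\cong\mathbb{Z}/2\mathbb{Z}$, and the snake lemma gives $\mathcal{O}_F^\times/(\mathcal{O}_F^\times)^2\cong\mathbb{Z}/2\mathbb{Z}$. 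Combining, $F^\times/(F^\times)^2\cong(\mathbb{Z}/2\mathbb{Z})^2$, with the two factors generated by the class of $\varpi$ and by the class of any unit whose residue is a non-square in $k_F^\times$.

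Finally, I would check that the two prescribed cocycles correspond to such a set of generators. The class of $\varpi$ manifestly produces the cocycle $\sigma\mapsto \sigma(\varpi^{1/2})/\varpi^{1/2}$. For the second generator, note that $\xi\in E\setminus F$ with $\mathrm{Tr}_{E/F}(\xi)=0$ forces $\bar\xi=-\xi$, so $\xi^2=-\mathrm{N}_{E/F}(\xi)\in F^\times$, and $\xi$ is a square root of $\xi^2$. The element $\xi^2$ lies in $\mathcal{O}_F^\times$ (as $\xi\in\mathcal{O}_E^\times$), and it is not a square in $F^\times$, since a square root would lie in $F$ and contradict $\xi\notin F$. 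Hence, as $E/F$ is unramified, the reduction of $\xi^2$ is a non-square in $k_F^\times$, so $\xi^2$ represents the nontrivial class in $\mathcal{O}_F^\times/(\mathcal{O}_F^\times)^2$. Under Kummer, the class of $\xi^2$ maps to the cocycle $\sigma\mapsto \sigma(\xi)/\xi$, completing the proof.

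The argument encounters no real obstacle; the only subtlety is the verification that $\xi^2$ really does represent the nontrivial unit square class, which is handled by combining $\xi\notin F$ with Hensel's lemma in residue characteristic $\neq 2$.
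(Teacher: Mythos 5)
Your proposal is correct and takes essentially the same route as the paper: both start from the Kummer sequence $1\to\{\pm1\}\to\mathbb{G}_\mathrm{m}\overset{[2]}{\to}\mathbb{G}_\mathrm{m}\to1$ to identify $H^1(F,\mathbb{Z}/2\mathbb{Z})$ with $F^\times/(F^\times)^2$ and then read off the generators via $a\mapsto[\sigma\mapsto\sigma(\sqrt{a})/\sqrt{a}]$. You merely make explicit the computation of $F^\times/(F^\times)^2\cong(\mathbb{Z}/2\mathbb{Z})^2$ (via Hensel on principal units) and the fact that $\xi^2$ is a non-square unit, details the paper leaves implicit or delegates to the citation of Serre.
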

\begin{proof}
Consider the exact sequence
$$1\longrightarrow\{\pm1\}\longrightarrow\mathbb{G}_\mathrm{m}\overset{[2]}{\longrightarrow}\mathbb{G}_\mathrm{m}\longrightarrow 1$$
with the captioned arrow being the doubling map. Note that $\{\pm 1\}$ is a trivial Galois module isomorphic to $\mathbb{Z}/2\mathbb{Z}$.

In particular, the exact sequence of Galois cohomology induced from it implies that
$$H^1(F,\mathbb{Z}/2\mathbb{Z})\cong F/F^2.$$
Thus $H^1(F,\mathbb{Z}/2\mathbb{Z})$ is in bijection to the quadratic extensions over $F$. The second assertion on the generators follows again from \cite[\S I.5.4]{Serre}.
\end{proof}

\begin{lem}\label{lemma:cohomology_unitary}
Let $E_\circ/F_\circ$ be an unramified quadratic extension of nonarchimedean local fields that contains $F$ as a subfield. Then
$$H^1(F,\mathrm{Res}_{F_\circ/F}\mathrm{U}_{1,E_\circ/F_\circ}) \cong \mathbb{Z}/2\mathbb{Z}.$$

Moreover,
$$\mathrm{U}_{1,E_\circ/F_\circ}(F_\circ)/\mathrm{U}_{1,E_\circ/F_\circ}(F_\circ)^2\cong\mathbb{Z}/2\mathbb{Z}.$$
\end{lem}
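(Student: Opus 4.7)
The plan is to establish both assertions by reducing to well-known computations on the unitary group of rank one via Shapiro's lemma and the exact sequence \eqref{exact:seq}.

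For the first assertion, I would begin by invoking Shapiro's lemma for Galois cohomology of a Weil restriction, which gives an isomorphism
$$H^1(F,\mathrm{Res}_{F_\circ/F}\mathrm{U}_{1,E_\circ/F_\circ})\cong H^1(F_\circ,\mathrm{U}_{1,E_\circ/F_\circ}).$$
Then I would apply the long exact sequence in Galois cohomology attached to the exact sequence \eqref{exact:seq} in the case of $E_\circ/F_\circ$, namely
$$1\longrightarrow \mathrm{U}_{1,E_\circ/F_\circ}\longrightarrow \mathrm{Res}_{E_\circ/F_\circ}\mathbb{G}_\mathrm{m}\overset{\mathrm{N}}{\longrightarrow}\mathbb{G}_\mathrm{m}\longrightarrow 1.$$
Hilbert 90 (together with another application of Shapiro's lemma identifying $H^1(F_\circ,\mathrm{Res}_{E_\circ/F_\circ}\mathbb{G}_\mathrm{m})$ with $H^1(E_\circ,\mathbb{G}_\mathrm{m})$) yields
$$H^1(F_\circ,\mathrm{U}_{1,E_\circ/F_\circ})\cong F_\circ^\times/\mathrm{N}_{E_\circ/F_\circ}(E_\circ^\times).$$
Local class field theory identifies the right-hand side with $\mathbb{Z}/2\mathbb{Z}$, completing the first isomorphism.

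For the second assertion, I would exploit the Kummer-type sequence for $\mathrm{U}_{1,E_\circ/F_\circ}$: its $2$-torsion (computed over $\overline{F_\circ}$) is the constant Galois module $\{\pm 1\}\cong\mathbb{Z}/2\mathbb{Z}$, since the equations $x\bar{x}=1$ and $x^2=1$ cut out $\{\pm 1\}$ in $E_\circ^\times$. The doubling map on $\mathrm{U}_{1,E_\circ/F_\circ}$ is therefore fit into an exact sequence
$$1\longrightarrow \mathbb{Z}/2\mathbb{Z}\longrightarrow \mathrm{U}_{1,E_\circ/F_\circ}\overset{[2]}{\longrightarrow}\mathrm{U}_{1,E_\circ/F_\circ}\longrightarrow 1,$$
and the associated long exact sequence yields
$$1\longrightarrow \mathrm{U}_{1,E_\circ/F_\circ}(F_\circ)/\mathrm{U}_{1,E_\circ/F_\circ}(F_\circ)^2\longrightarrow H^1(F_\circ,\mathbb{Z}/2\mathbb{Z})\longrightarrow H^1(F_\circ,\mathrm{U}_{1,E_\circ/F_\circ})[2]\longrightarrow 1,$$
using that $H^1(F_\circ,\mathrm{U}_{1,E_\circ/F_\circ})\cong\mathbb{Z}/2\mathbb{Z}$ is already $2$-torsion so the $[2]$-image in the last term is zero.

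Finally, I would count: Lemma \ref{lemma:quadratic} gives $|H^1(F_\circ,\mathbb{Z}/2\mathbb{Z})|=4$ (the hypothesis that $F_\circ$ is nonarchimedean of residue characteristic $\neq 2$ is inherited from $F$ through the unramified extension), and the first assertion gives $|H^1(F_\circ,\mathrm{U}_{1,E_\circ/F_\circ})|=2$, forcing the first term of the short exact sequence above to have order $2$, hence to be $\mathbb{Z}/2\mathbb{Z}$. I do not anticipate a substantial obstacle here; the only subtlety is keeping careful track of which field plays the role of base in each Shapiro/Hilbert 90 reduction, and verifying that the $2$-torsion subgroup scheme of $\mathrm{U}_{1,E_\circ/F_\circ}$ is the constant group $\{\pm 1\}$ rather than a twist thereof.
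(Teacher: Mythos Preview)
Your proposal is correct and follows essentially the same approach as the paper: Shapiro's lemma to pass from $F$ to $F_\circ$, the long exact sequence attached to \eqref{exact:seq} together with Hilbert~90 for the first assertion, and the Kummer-type sequence for the doubling map on $\mathrm{U}_{1,E_\circ/F_\circ}$ combined with a cardinality count using Lemma~\ref{lemma:quadratic} for the second. The only cosmetic difference is that the paper leaves the identification $H^1(F_\circ,\mathrm{U}_{1,E_\circ/F_\circ})\cong F_\circ^\times/\mathrm{N}_{E_\circ/F_\circ}(E_\circ^\times)$ implicit, whereas you spell out Hilbert~90 and local class field theory; also note that the residue characteristic of $F_\circ$ is inherited from $F$ simply because $F\subset F_\circ$, not via the unramified hypothesis on $E_\circ/F_\circ$.
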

\begin{proof}
The first assertion follows from the long exact sequence in Galois cohomology induced by \eqref{exact:seq} and the discussion in \cite[\S I.5.4]{Serre}. We consider (\ref{exact:seq}) for $E_\circ/F_\circ$ where the exact sequence is defined over $F_\circ$. Then we have $H^1(F_\circ,\mathrm{U}_{1,E_\circ/F_\circ})\cong\mathbb{Z}/2\mathbb{Z}$. On the other hand, by Shapiro's Lemma, one has
$$H^1(F,\mathrm{Res}_{F_\circ/F}\mathrm{U}_{1,E_\circ/F_\circ})\cong H^1(F_\circ,\mathrm{U}_{1,E_\circ/F_\circ}).$$
The cocycle $[\sigma\mapsto \frac{\sigma(\varpi_{F_\circ}^{1/2})}{\varpi_{F_\circ}^{1/2}}]$ defines a nontrivial class in $H^1(F_\circ,\mathrm{U_{1,E_\circ/F_\circ}})$. The cohomology group is therefore generated by it.

Consider the exact sequence
$$1\longrightarrow\mathrm{U}_{1,E_\circ/F_\circ}[2]\longrightarrow\mathrm{U}_{1,E_\circ/F_\circ}\overset{[2]}{\longrightarrow}\mathrm{U}_{1,E_\circ/F_\circ}\longrightarrow 1$$
defined over $F_\circ$. Note that $\mathrm{U}_{1,E_\circ/F_\circ}[2]=\{\pm 1\}$.

In particular, the exact sequence of Galois cohomology induced from it implies

\begin{adjustbox}{width=\textwidth}
$1\longrightarrow\mathrm{U}_{1,E_\circ/F_\circ}(F_\circ)/\mathrm{U}_{1,E_\circ/F_\circ}(F_\circ)^2\longrightarrow H^1(F_\circ,\mathbb{Z}/2\mathbb{Z})\longrightarrow H^1(F_\circ,\mathrm{U}_{1,E_\circ/F_\circ})\overset{[2]}{\longrightarrow} H^1(F_\circ,\mathrm{U}_{1,E_\circ/F_\circ})\longrightarrow 1,
$
\end{adjustbox}

\noindent where the last nontrivial arrow is the doubling map. This map is trivial since $H^1(F_\circ,\mathrm{U}_{1,E_\circ/F_\circ})$ is $2$-torsion.

Thus we have an exact sequence $$1\longrightarrow\mathrm{U}_{1,E_\circ/F_\circ}(F_\circ)/\mathrm{U}_{1,E_\circ/F_\circ}(F_\circ)^2\longrightarrow(\mathbb{Z}/2\mathbb{Z})^2\longrightarrow\mathbb{Z}/2\mathbb{Z}\longrightarrow 0.
$$



\end{proof}


\begin{cor}\label{coro:cohomology_stabilizer}
Let I-IV denote the four types in \cite[\S 3.6]{rogawski_1990}. Then, we have
\begin{align*}
H^1(F,G_{1\gamma})=\begin{cases} (\mathbb{Z}/2\mathbb{Z})^2 & \textrm{ in type I,} \\
(\mathbb{Z}/2\mathbb{Z})^4 & \textrm{ in type II,} \\
(\mathbb{Z}/2\mathbb{Z})^2 & \textrm{ in type III,}\\
0 & \textrm{ in type IV.}
\end{cases}
\end{align*}
In particular, for any $\gamma$ in a type IV torus, $\mathfrak{D}(F,G_{1\gamma},G_1)=0$.
\end{cor}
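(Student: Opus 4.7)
The key observation is the direct product decomposition $G_\gamma[2]=\{\pm\mathbbm{1}_3\}\times G_{1\gamma}$ recorded above, which holds as $F$-group schemes: $\{\pm\mathbbm{1}_3\}$ is central and $F$-rational with trivial Galois action, $G_{1\gamma}$ is $F$-defined as $G_\gamma\cap G_1$, and the two factors intersect trivially because $\det(-\mathbbm{1}_3)=-1$. Galois cohomology respects this direct product, giving
\[
H^1(F,G_\gamma[2])\;\cong\;H^1(F,\{\pm 1\})\oplus H^1(F,G_{1\gamma}),
\]
and Lemma \ref{lemma:quadratic} identifies the first summand with $(\mathbb{Z}/2\mathbb{Z})^2$. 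The task thus reduces to computing $H^1(F,G_\gamma[2])$ in each of the four types and then stripping off the $(\mathbb{Z}/2\mathbb{Z})^2$ contributed by $\{\pm\mathbbm{1}_3\}$.

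For each type I would identify $G_\gamma[2]$ as a product of Weil restrictions of the constant $F$-group scheme $\{\pm 1\}$. Specifically, $\mathrm{U}_{1,F_2/F_1}[2]=\{\pm 1\}$ as an $F_1$-group scheme, while $(\mathrm{Res}_{F_2/F_1}\mathbb{G}_\mathrm{m})[2]=\mathrm{Res}_{F_2/F_1}\{\pm 1\}$. Applying Shapiro's Lemma and then Lemma \ref{lemma:quadratic} over the intermediate nonarchimedean local field $F_\circ$ gives $H^1(F,\mathrm{Res}_{F_\circ/F}\{\pm 1\})=H^1(F_\circ,\{\pm 1\})=(\mathbb{Z}/2\mathbb{Z})^2$. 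For the four types this identifies $G_\gamma[2]$ with $\mathrm{Res}_{E/F}\{\pm 1\}\times\{\pm 1\}$, $\{\pm 1\}^3$, $\mathrm{Res}_{F_\circ/F}\{\pm 1\}\times\{\pm 1\}$, and $\mathrm{Res}_{F_\circ/F}\{\pm 1\}$ respectively, so that
\[
H^1(F,G_\gamma[2])\;\cong\;\begin{cases}(\mathbb{Z}/2\mathbb{Z})^4 & \text{in type I,}\\(\mathbb{Z}/2\mathbb{Z})^6 & \text{in type II,}\\(\mathbb{Z}/2\mathbb{Z})^4 & \text{in type III,}\\(\mathbb{Z}/2\mathbb{Z})^2 & \text{in type IV.}\end{cases}
\]
Subtracting the $(\mathbb{Z}/2\mathbb{Z})^2$ from $\{\pm\mathbbm{1}_3\}$ yields the claimed ranks for $H^1(F,G_{1\gamma})$. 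In particular $H^1(F,G_{1\gamma})=0$ in type IV, and then $\mathfrak{D}(F,G_{1\gamma},G_1)=0$ follows since it is a subgroup of the trivial group.

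The only step requiring real thought is justifying the $F$-rational direct-product splitting of $G_\gamma[2]$, but this is essentially formal and already implicit in the preceding discussion. As a cross-check, the same answers can be extracted from the exact sequence \eqref{eqn:2-torsion}: the values of $H^1(F,G_\gamma)$ are already tabulated in the text, and the quotients $G_\gamma(F)/G_\gamma(F)^2$ can be read off from Lemma \ref{lemma:cohomology_unitary} together with $E^\times/(E^\times)^2=(\mathbb{Z}/2\mathbb{Z})^2$ for the type I factor. I do not anticipate any substantive obstacle.
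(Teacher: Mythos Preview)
Your proof is correct. Both you and the paper hinge on the decomposition $G_\gamma[2]=\{\pm\mathbbm{1}_3\}\times G_{1\gamma}$ to isolate $H^1(F,G_{1\gamma})$ as a direct summand of $H^1(F,G_\gamma[2])$, and then determine the latter. The paper does this indirectly via the exact sequence \eqref{eqn:2-torsion}, feeding in the previously tabulated $H^1(F,G_\gamma)$ together with $G_\gamma(F)/G_\gamma(F)^2$ (the unitary factors of which are supplied by Lemma \ref{lemma:cohomology_unitary}). You instead compute $H^1(F,G_\gamma[2])$ directly by recognising $G_\gamma[2]$ in each type as a product of Weil restrictions of $\{\pm 1\}$ and applying Shapiro's lemma plus Lemma \ref{lemma:quadratic} over the relevant intermediate local field. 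Your route is a bit more self-contained and bypasses the exact sequence altogether (you invoke it only as a consistency check), whereas the paper's route recycles computations already in hand; the arithmetic is the same either way.
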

\begin{proof}
Consider the exact sequence (\ref{eqn:2-torsion}). We may write
$$H^1(F,G_\gamma[2])=G_\gamma(F)/G_\gamma(F)^2\times H^1(F,G_{1\gamma}).$$
Note that all the other terms except for $H^1(F,G_{1\gamma})$ have been computed in previous derivation.
\end{proof}

For $\nu\in E^\times$, let $T_\nu\subset G$ be the $\sigma$-split torus whose points in an $F$-algebra $R$ are given by the neutral component
\begin{align}\label{eqn:canonical_representative}
T_\nu(R)=\left\{\left(\begin{smallmatrix}
x & & y\\
& z & \\
\nu y & & x\end{smallmatrix}\right)\in G(R)\mid x,y,z \in E \otimes_F R\right\}^\circ.
\end{align}

We are interested in those $\gamma$ with nontrivial $H^1(F,G_{1\gamma})$.

\begin{lem}\label{lemma:classification_n=3}
Any regular semisimple element $\gamma\in\mathcal{S}(F)$ in a torus of type I-III, $[\gamma]_\mathrm{st}$ intersects $T_\nu(F)$ for some $\nu \in \{1,\xi^2,\varpi,\xi^2\varpi\}$.
\end{lem}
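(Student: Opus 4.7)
The plan is to reduce the assertion via Lemma \ref{lemma:isomorphism_symmetric} to matching characteristic polynomials, and then to exhibit an explicit element of $T_\nu(F)$ with the same eigenvalues as $\gamma$. Concretely, I would first invoke Lemma \ref{lemma:isomorphism_symmetric} to identify $G_1\backsslash\mathcal{S}$ with $G\backsslash G$ as geometric invariant theoretic quotients. Regular semisimple elements of $\mathcal{S}(\bar F)$ have closed orbits, so they are $G_1(\bar F)$-conjugate if and only if they determine the same point of $G\backsslash G$, which for regular elements means having the same characteristic polynomial. So the task reduces to: for each $\gamma$ as in the statement, produce a $\nu \in \{1,\xi^2,\varpi,\xi^2\varpi\}$ and $\gamma'\in T_\nu(F)$ whose eigenvalues match those of $\gamma$.

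Next I would read off the eigenvalue structure from the Rogawski classification of $G_\gamma^\circ$. In type II all three eigenvalues lie in $U_{1,E/F}\subset E^\times$; in type I exactly one lies in $U_{1,E/F}$ and the other two form a $\mathrm{Gal}(E/F)$-pair $\{\lambda,\bar\lambda^{-1}\}$ in $E^\times$; in type III one lies in $U_{1,E/F}$ while the other two inhabit $E_\circ = EF_\circ$ and are swapped by $\mathrm{Gal}(F_\circ/F)$, where $F_\circ$ is a ramified quadratic extension of $F$ different from $E$, hence $F_\circ\in\{F[\sqrt\varpi],F[\sqrt{\xi^2\varpi}]\}$. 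I would then assign $\nu = 1,\xi^2,\varpi,\xi^2\varpi$ to the four cases respectively, chosen so that $\sqrt\nu$ lies in the field generated by $\lambda_1-\lambda_3$ over $E$.

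After that, labeling so that $\lambda_2\in U_{1,E/F}$, I would set
$$
x=\frac{\lambda_1+\lambda_3}{2},\quad y=\frac{\lambda_1-\lambda_3}{2\sqrt\nu},\quad z=\lambda_2.
$$
A short check gives $x,y,z\in E$: in types I and II because $\sqrt\nu\in E$ and $\lambda_1,\lambda_3\in E$; in type III because $\lambda_1-\lambda_3$ lies in the $\mathrm{Gal}(F_\circ/F)$-antiinvariant subspace $E\sqrt\nu$ of $E_\circ$. The candidate
$$
\gamma'=\begin{psmatrix}x&&y\\&z&\\\nu y&&x\end{psmatrix}
$$
then has eigenvalues $x\pm\sqrt\nu\,y=\lambda_1,\lambda_3$ and $\lambda_2$ by construction. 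The defining relations of $T_\nu(F)$, namely $z\bar z=1$, $x\bar x+\nu y\bar y=1$, and $\mathrm{Tr}_{E/F}(x\bar y)=0$, will follow from $\lambda_i\bar\lambda_i=1$ (where in the type III case $\bar\cdot$ is the extension of the $E/F$-Galois action to $E_\circ$ fixing $\sqrt\nu$).

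The hardest part will be the type-to-$\nu$ matching. With the wrong square class of $\nu$ the identity $x\bar x+\nu y\bar y=1$ collapses to a condition equivalent to $\lambda_1=\lambda_3$, contradicting regularity. The mismatch reflects the fact that each of $T_1, T_{\xi^2}, T_\varpi, T_{\xi^2\varpi}$ has a distinct $F$-isomorphism class, corresponding exactly to types II, I, and the two subcases of III. Thus the isomorphism class of $G_\gamma^\circ$ as an $F$-torus singles out a unique $\nu$ in the list, and the four listed values exhaust the admissible $F$-types appearing in I--III (type IV, which would require a cubic splitting, is automatically excluded from the conclusion and indeed contributes trivially by Corollary \ref{coro:cohomology_stabilizer}).
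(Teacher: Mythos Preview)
Your approach is correct but genuinely different from the paper's. The paper argues at the level of tori: it invokes the classification of maximal tori in $G$ together with Rogawski's Lemma 3.4.1 to the effect that two maximal $F$-tori of $G$ that are abstractly $F$-isomorphic are already $G(\overline F)$-conjugate via an element inducing an $F$-isomorphism. Since $G_\gamma$ is $F$-isomorphic to $T_\nu$ for the appropriate $\nu$, this produces $g\in G(\overline F)$ with $gG_\gamma g^{-1}=T_\nu$ as $F$-tori, whence $g\gamma g^{-1}\in T_\nu(F)$; Lemma \ref{lemma:isomorphism_symmetric} then upgrades this to a $G_1(\overline F)$-conjugacy inside $\mathcal S$. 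No explicit element of $T_\nu(F)$ is ever written down.

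Your route, by contrast, uses Lemma \ref{lemma:isomorphism_symmetric} only to reduce $G_1(\overline F)$-conjugacy in $\mathcal S$ to equality of characteristic polynomials, and then manufactures the matching element of $T_\nu(F)$ by hand from the eigenvalues. This is more elementary and has the virtue of being fully explicit; it also makes the assignment of $\nu$ to each type transparent, since you can see directly that $y=(\lambda_1-\lambda_3)/(2\sqrt\nu)$ lands in $E$ precisely when $\sqrt\nu$ generates the right extension. The paper's argument is shorter and more structural, leaning on a black-box citation, but it requires a bit of care to see that the quoted conjugacy of tori really does carry $\gamma$ into $T_\nu(F)$ (rather than merely $T_\nu(\overline F)$); that step is implicit in the clause ``conjugation by $g$ induces an isomorphism $G_\gamma\overset{\sim}{\longrightarrow} G_{\gamma'}$ over $F$.'' Both arguments ultimately rest on Lemma \ref{lemma:isomorphism_symmetric} to pass from $G$-conjugacy to $G_1$-conjugacy.
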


\begin{proof}
Recall that a maximal torus in $G$ is isomorphic to a tori of type I-IV, and two regular semisimple elements $\gamma$ and $\gamma' \in G(F)$ are $G(\overline{F})$-conjugate if and only if their centralizers $G_{\gamma}$ and $G_{\gamma'}$ are isomorphic \cite[Lemma 3.4.1]{rogawski_1990}. This occurs if and only if there is a $g \in G(\overline{F})$ such that conjugation by $g$ induces an isomorphism $G_{\gamma} \overset{\sim}{\longrightarrow} G_{\gamma'}$ over $F$.  
Thus if we choose a set of maximal tori in $G$ whose isomorphism classes form a set of representatives for the isomorphism classes in types I-IV, then every element of $G(F)$ is $G(\overline{F})$-conjugate to an element in one of these representative tori.

Using Lemma \ref{lemma:isomorphism_symmetric} we know that every element of $\mathcal{S}(F)$ is $G_1(\overline{F})$-conjugate to an element in one of these representative tori.  As representatives for the isomorphism classes I, II, and III we may take $T_\nu$ with $\nu=\xi^2$, $\nu=1$, and $\nu=\varpi$ or $\xi^2\varpi$ respectively. The two possibilities for III correspond to the two ramified choices for $F_\circ/F$.
\end{proof}
\begin{remark}
Assuming that $\nu \in \{1,\xi^2,\varpi,\xi^2\varpi\}$, one has $T_{\nu} \cong T_{\nu'}$ if and only if $\nu=\nu'.$  Thus $\nu$ is an invariant of the stable class $[\gamma]_{\mathrm{st}}.$
\end{remark}

For the rest of the paper, we focus on type I-III and omit type IV tori when we mention types. For tori of type I-III, consider
\begin{align*}
\gamma=\left(\begin{smallmatrix} x & & y \\ & z & \\ \nu y & & x  \end{smallmatrix}\right)\in T_\nu(F)\label{eq:gamma}
\end{align*}
that are regular semisimple as elements of $G(F)$. Then $G_{\gamma}^\circ=T_\nu$, and thus $G_{1\gamma}=T_\nu \cap G_1$ is a finite \'etale group scheme over $F$ (see Lemma \ref{lemma:stabilizer} for details).

We can choose an isomorphism 
$    E \otimes_F \overline{F} \cong \overline{F} \oplus \overline{F}$ intertwining $\theta$ with $(x,y) \mapsto (y,x).$  Using this isomorphism we can construct an isomorphism
$G_{\overline{F}} \cong \mathrm{GL}_{3\overline{F}}$ sending 
$G_{1\overline{F}}$ to $\mathrm{SO}_{3\overline{F}}$, where
$$
\mathrm{SO}_{3}(R):=\{g \in \mathrm{GL}_3(R)\mid g^tJ_3g=J_3\text{ and }\det g=1\}
$$
for $F$-algebras $R$. We use this isomorphism to identify $G_{\overline{F}}$ and $\mathrm{GL}_{3\overline{F}}$, hence identifying $G(\overline{F})$ and $\mathrm{GL}_3(\overline{F})$.

Then under this identification
\begin{align*}
T_{\nu\overline{F}}(\overline{F}) & =\left\{\left(\begin{smallmatrix}
x & & y\\
& z & \\
\nu y & & x\end{smallmatrix}\right)\in \mathrm{GL}_3(\overline{F})\mid x,y,z \in \overline{F}\right\}.
\end{align*}

\begin{lem}\label{lemma:stabilizer}
Using the isomorphism above, we have
\begin{align}
G_{1\gamma}(\overline{F})=\left\langle \left(\begin{smallmatrix}
-1 & & \\
& 1 & \\
& & -1
\end{smallmatrix}\right), \left(\begin{smallmatrix}
& & \nu^{-1/2}\\
& -1 & \\
\nu^{1/2} & & 
\end{smallmatrix}\right)\right\rangle\cong(\mathbb{Z}/2\mathbb{Z})^2.
\end{align}
\end{lem}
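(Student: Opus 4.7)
The plan is to reduce the problem to an elementary computation inside $\mathrm{SO}_{3,\overline{F}}$, using that the centralizer of a regular semisimple element in $\mathrm{GL}_3$ is exactly the unique maximal torus containing it.

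First I would argue that, over $\overline{F}$, the full stabilizer satisfies $G_{\gamma}(\overline{F})=T_{\nu}(\overline{F})$. Indeed, under the identification $G_{\overline{F}}\cong\mathrm{GL}_{3,\overline{F}}$, the element $\gamma$ is diagonalizable with the three distinct eigenvalues $\lambda_1,\lambda_2,\lambda_3$ of \eqref{def_eigenvalues} (distinctness being the regular semisimple hypothesis). Hence its centralizer in $\mathrm{GL}_3(\overline{F})$ is the maximal torus conjugate to the diagonal torus; but $T_{\nu,\overline{F}}$ is already a maximal torus containing $\gamma$, so $G_{\gamma}(\overline{F})=T_{\nu}(\overline{F})$. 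Consequently $G_{1\gamma}(\overline{F})=T_{\nu}(\overline{F})\cap\mathrm{SO}_3(\overline{F})$.

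Next I would parametrize a general element
$$\tau=\begin{psmatrix} x & & y \\ & z & \\ \nu y & & x \end{psmatrix}\in T_{\nu}(\overline{F}),$$
and impose the defining equations of $\mathrm{SO}_3$, namely $\tau^{t}J_3\tau=J_3$ and $\det\tau=1$. A direct computation gives
$$\tau^{t}J_3\tau=\begin{psmatrix} 2\nu xy & & x^{2}+\nu y^{2} \\ & z^{2} & \\ x^{2}+\nu y^{2} & & 2xy \end{psmatrix},$$
so the system reduces to $z^{2}=1$, $x^{2}+\nu y^{2}=1$, $xy=0$, together with $\det\tau=z(x^{2}-\nu y^{2})=1$. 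Since $\nu\neq 0$, the condition $xy=0$ forces the disjunction $y=0$ or $x=0$. If $y=0$ then $x=\pm 1$ and the determinant condition forces $z=1$; this yields the identity and $\mathrm{diag}(-1,1,-1)$. If $x=0$ then $y=\pm\nu^{-1/2}$ and the determinant condition forces $z=-1$; this yields the second generator listed in the statement, together with its negative (which is the product of the two generators).

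Collecting the four solutions gives an order $4$ elementary abelian group, and reading off its generators recovers the description in the lemma. The main thing to be careful about is the initial reduction step $G_{\gamma}=T_{\nu}$ over $\overline{F}$: one must emphasize that although $T_{\nu}$ was defined in \eqref{eqn:canonical_representative} as the \emph{neutral component}, over $\overline{F}$ the centralizer of a matrix with pairwise distinct eigenvalues is connected, so no component group issue arises. Beyond that the argument is a routine verification of quadratic equations, not an obstacle.
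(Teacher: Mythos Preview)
Your proof is correct and essentially the same as the paper's. The paper simply asserts $G_{1\gamma}=T_\nu\cap G_1$ (relying on the earlier remark that $G_\gamma^\circ=T_\nu$) and then solves $gg^\ast=\mathbbm{1}$ for $g\in T_\nu(\overline{F})$; since $T_\nu$ is $\sigma$-split this amounts to $g^2=\mathbbm{1}$, giving the same system $z^2=1$, $x^2+\nu y^2=1$, $xy=0$ together with $\det g=z(x^2-\nu y^2)=1$, and hence the same four solutions. Your version spells out the reduction $G_\gamma(\overline{F})=T_\nu(\overline{F})$ more carefully and phrases the orthogonality condition as $\tau^tJ_3\tau=J_3$ rather than $gg^\ast=\mathbbm{1}$, but these are the same computation.
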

\begin{proof}
Since $G_{1\gamma}=T_\nu\cap G_1$, to compute $G_{1\gamma}$ we solve for $g\in T_\nu(\overline{F})$ that satisfy $gg^\ast=\mathbbm{1}$. For $g=\begin{psmatrix}
x & & y\\
& z & \\
\nu y & & x
\end{psmatrix}$ to satisfy $gg^\ast=\mathbbm{1}$, we have $z^2=1$, $x^2+\nu y^2=1$, and $z(x^2-\nu y^2)=1$. In particular, we have $z=\pm 1$. The two cases correspond to solutions
$$g=\left(\begin{smallmatrix}
\pm 1 & & \\
& 1 & \\
& & \pm 1
\end{smallmatrix}\right)\quad\text{ and }\quad g=\left(\begin{smallmatrix}
& & \pm\nu^{-1/2}\\
& -1 & \\
\pm\nu^{1/2} & & 
\end{smallmatrix}\right)$$
respectively.
\end{proof}

\subsection{Parametrization of rational orbits}
In previous parts of the section, we know that any $\gamma$ of interest are in the (finite) union of $T_\nu(F)$ and computed the cohomological data in need. Now, we can finally parametrize the rational orbits inside the stable orbit of those $\gamma$.

\begin{lem}\label{lemma:endoscopy}
Let $\nu\in\{1,\xi^2,\varpi,\xi^2\varpi\}$ and $\gamma \in T_{\nu}(F)$ be regular semisimple. Then the group $\mathfrak{D}(F,G_{1\gamma},G_1)$ is isomorphic to $$\mathfrak{D}(F,G_{1\gamma},G_1)\cong\begin{cases}
{\mathbb{Z}}/{2\mathbb{Z}} & \text{in type I,}\\
({\mathbb{Z}}/{2\mathbb{Z}})^2 & \text{in types II and III}\\
\end{cases}$$
respectively.  More precisely, there are isomorphisms
\begin{align*}
\phi:D_\gamma & \longrightarrow\mathfrak{D}(F,G_{1\gamma},G_1)\\
\mu & \longmapsto\left[\sigma\mapsto\sigma(a_\mu)a_\mu^{-1}\right]
\end{align*}
where $D_\gamma=\begin{cases}F^\times/N_{E/F}(E^{\times}) & \text{if }\nu=\xi^2,\\
F^\times/(F^\times)^2 & \text{if }\nu=1,\varpi,\xi^2\varpi
\end{cases}$ and $a_\mu=\begin{psmatrix}
\mu^{1/2} & & \\
& 1 & \\
& & \mu^{-1/2}
\end{psmatrix}$.
\end{lem}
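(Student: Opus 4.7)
The plan is to define $\phi$ via the explicit cocycle formula and verify its properties in four stages.

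\textbf{Stage 1: Constructing the cocycle.} I would first verify that $a_\mu\in G_1(\bar F)$ for every $\mu\in F^\times$ by direct computation: $a_\mu^tJ_3a_\mu=J_3$ and $\det a_\mu=1$. Setting $c_\mu(\sigma):=\sigma(a_\mu)a_\mu^{-1}$, since $\sigma(\mu^{1/2})=\pm\mu^{1/2}$, the values of $c_\mu$ lie in $\{I,r_1\}$, where $r_1=\mathrm{diag}(-1,1,-1)$ is the first generator of $G_{1\gamma}(\bar F)$ from Lemma \ref{lemma:stabilizer}. Hence $c_\mu\in Z^1(F,G_{1\gamma})$, and because $a_\mu\in G_1(\bar F)$ trivializes $c_\mu$ in $H^1(F,G_1)$, we obtain $[c_\mu]\in\mathfrak D(F,G_{1\gamma},G_1)$.

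\textbf{Stage 2: Descent to $D_\gamma$.} To see that $\phi$ factors through $F^\times/(F^\times)^2$, I would observe that replacing $\mu$ by $\mu t^2$ with $t\in F^\times$ replaces $a_\mu$ by $a_\mu\cdot\mathrm{diag}(t,1,t^{-1})$; since $\mathrm{diag}(t,1,t^{-1})\in G_1(F)$ is Galois-fixed, $c_\mu$ is unchanged. For type I ($\nu=\xi^2$), a further descent to $F^\times/N_{E/F}(E^\times)$ is needed. This I would achieve by exhibiting an explicit trivialization of $c_{\xi^2}$ in $G_{1\gamma}$: the element $r_2=\bigl(\begin{smallmatrix}&&\xi^{-1}\\&-1&\\\xi&&\end{smallmatrix}\bigr)\in G_{1\gamma}(E)$ satisfies $\theta(r_2)r_2^{-1}=r_1$, exactly matching $c_{\xi^2}(\theta)$. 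This shows $N_{E/F}(E^\times)\subset\ker\phi$.

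\textbf{Stage 3: Injectivity.} Suppose $[c_\mu]=0$ in $H^1(F,G_{1\gamma})$, i.e.\ there exists $g\in G_{1\gamma}(\bar F)=\{I,r_1,r_2,r_1r_2\}$ with $c_\mu(\sigma)=\sigma(g)g^{-1}$ for all $\sigma$. The Galois action on $r_2$ is controlled by the restriction $\sigma|_{F(\nu^{1/2})}$, while $c_\mu(\sigma)$ is controlled by $\sigma|_{F(\mu^{1/2})}$. A case-by-case comparison, using the generators of $\mathrm{Gal}(\bar F/F)$ that move $\xi$, $\varpi^{1/2}$, and $\mu^{1/2}$, determines exactly when such a trivializer exists and identifies the resulting equivalence class of $\mu$ with the $D_\gamma$ stated for each type.

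\textbf{Stage 4: Surjectivity.} Using Corollary \ref{coro:cohomology_stabilizer} together with the fact that $H^1(F,G_1)\cong\mathbb Z/2\mathbb Z$, I would compute $|\mathfrak D|$ in each type and compare with $|D_\gamma|$; once equality is established, the injective homomorphism $\phi$ is automatically surjective.

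The main obstacle is Stage 3: one must track carefully how the coboundaries built from $r_2$, whose entries live in the (possibly ramified) quadratic extension $F(\nu^{1/2})$, interact with the cocycles $c_\mu$ controlled by $F(\mu^{1/2})$. In particular, for type III one must resolve the Galois cohomology inside the biquadratic compositum of $E$ and the ramified extension $F(\nu^{1/2})$ and verify that the rational orbit structure matches precisely the stated $D_\gamma$.
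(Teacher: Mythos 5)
The decisive problem is Stage 3, and it is not a deferrable technicality. Your Stage 2 mechanism for type I applies verbatim in type III: the generator $r_2=\left(\begin{smallmatrix}&&\nu^{-1/2}\\&-1&\\\nu^{1/2}&&\end{smallmatrix}\right)$ of $G_{1\gamma}(\bar F)$ from Lemma \ref{lemma:stabilizer} has entries in $F(\nu^{1/2})$ and satisfies $\sigma(r_2)r_2^{-1}=r_1$ exactly when $\sigma(\nu^{1/2})=-\nu^{1/2}$, for every $\nu$, ramified or not. Consequently the coboundaries with values in $G_{1\gamma}(\bar F)$ are precisely $\{1,\ \sigma\mapsto r_1^{\epsilon_\nu(\sigma)}\}$, where $\epsilon_\nu$ cuts out $F(\nu^{1/2})/F$, while $c_\mu(\sigma)=r_1^{\epsilon_\mu(\sigma)}$; so the kernel of $\mu\mapsto[c_\mu]$ on $F^\times/(F^\times)^2$ is $\{1,[\nu]\}$ in every case. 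For type I this is exactly the descent to $F^\times/N_{E/F}(E^\times)$, and for type II ($\nu=1$) it gives injectivity, but for type III it gives $\phi([\nu])=0$, so the case-by-case comparison you promise cannot come out as ``$D_\gamma=F^\times/(F^\times)^2$'': carried out honestly, your Stage 3 contradicts the statement rather than proving it. The collapse is visible concretely: $h=r_2a_\nu=\left(\begin{smallmatrix}&&\nu^{-1}\\&-1&\\\nu&&\end{smallmatrix}\right)$ lies in $\mathrm{SO}_3(F)$, centralizing considerations give $h^{-1}\gamma h=a_\nu^{-1}\gamma a_\nu$, so $\gamma=\gamma_1$ is rationally conjugate to the representative with parameter $\nu$ in Corollary \ref{coro:rational_orbits} (compare Proposition \ref{prop:main3}, whose formulae are indeed invariant under $\mu\mapsto\mu\nu$). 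Either you must explain what breaks the parallel with type I --- nothing in your outline does, and the Galois-cohomological computation does not distinguish the ramified from the unramified quadratic extension here --- or injectivity in type III fails for the map as defined; as written the proposal cannot establish the lemma in that case.

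Stage 4 is also not an argument. Corollary \ref{coro:cohomology_stabilizer} gives the order of $H^1(F,G_{1\gamma})$ and one knows $H^1(F,G_1)\cong\mathbb{Z}/2\mathbb{Z}$, but $\mathfrak{D}(F,G_{1\gamma},G_1)$ is the fibre over the base point of a map of pointed sets, and its cardinality is not determined by the cardinalities of source and target. In type II you would need that exactly four of the sixteen classes in $H^1(F,G_{1\gamma})$ die in $H^1(F,\mathrm{SO}_3)$; no counting of the kind you describe yields this (a surjective group homomorphism would even predict a kernel of order eight). What is actually required is a determination of which classes bound in $\mathrm{SO}_3(\bar F)$ --- for instance via the obstruction in $H^2(F,\mu_2)$ attached to the Klein four subgroup $G_{1\gamma}\subset\mathrm{SO}_3\cong\mathrm{PGL}_2$, whose preimage in $\mathrm{SL}_2$ is a quaternion group, or some equally explicit coboundary analysis --- and neither your outline nor the cardinality comparison supplies it. Stages 1 and 2 are fine (modulo the easy check that $\phi$ is a homomorphism, which follows since the $a_\mu$ commute and the cocycle values are central in the relevant computation), but with Stage 3 going the wrong way in type III and Stage 4 vacuous, the two load-bearing steps of the proposal have genuine gaps.
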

\begin{proof}
In the course of the proof here, we will work out explicit cocycles that represent elements of $\mathfrak{D}(F,G_{1\gamma},G_1)$. These will be recorded in Corollary \ref{coro:rational_orbits}.

If we follow the argument of the proof to Corollary \ref{coro:cohomology_stabilizer}, we can write down explicitly the cocycles in $H^1(F,G_{1\gamma})$. Consider those who are represented by a coboundary in $G_1(\overline{F})=\mathrm{SO}_3(\overline{F})$. It turns out that $\mathfrak{D}(F,G_{1\gamma},G_1)$ is generated by cocycles $\sigma\mapsto\sigma(a)a^{-1}$ with $a$ of the form
$$a=\begin{psmatrix}
\mu^{1/2} & & \\
& 1 & \\
& & \mu^{-1/2}
\end{psmatrix}.$$
In particular we can choose
$$\mu=\begin{cases}
\varpi & \text{in type I}\\
\xi^2,\varpi & \text{in types II and III}
\end{cases}$$
to generate $\mathfrak{D}(F,G_{1\gamma},G_1)$. Note that $\mathfrak{D}(F,G_{1\gamma},G_1)$ is in bijection to $F^\times/\mathrm{N}_{E/F}(E^\times)$ for type I, and to $F^\times/(F^\times)^2$ for type II and III canonically. The group structure on $\mathfrak{D}(F,G_{1\gamma},G_1)$ can thereby be determined.
\end{proof}
By Lemma \ref{lemma:endoscopy}, we define a map $F^\times\rightarrow D_\gamma$ via projection. We will denote the image of $a$ in $D_\gamma$ as $[a]$.

\begin{cor}\label{coro:rational_orbits}
The rational orbits lying within $[\gamma]_\mathrm{st}$ with $\gamma\in T_\nu(F)$ can be represented by
$$\left(\begin{smallmatrix}
x & & \mu y\\
& z &\\
\mu^{-1}\nu y & & x
\end{smallmatrix}\right)$$
with $\mu$ chosen in
$\begin{cases}
\{1,\varpi\} & \text{in type I},\\
\{1,\xi^2,\varpi,\xi^2\varpi\} & \text{in types II or III}.\\
\end{cases}$
\end{cor}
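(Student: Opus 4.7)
The proof plan is to combine Lemma \ref{lemma:endoscopy} with the lemma at the start of \S 2.3, which says that the cocycle realized as a coboundary $\sigma\mapsto g\sigma(g^{-1})$ for $g\in G_1(\overline{F})$ corresponds to the rational orbit with representative $g^{-1}\gamma g$. By Lemma \ref{lemma:endoscopy}, every class in $\mathfrak{D}(F,G_{1\gamma},G_1)$ is of the form $[\sigma\mapsto\sigma(a_\mu)a_\mu^{-1}]$ with $a_\mu=\mathrm{diag}(\mu^{1/2},1,\mu^{-1/2})$, with $\mu$ ranging over a set of coset representatives of $D_\gamma$. The strategy is therefore to exhibit this cocycle as a coboundary in $G_1(\overline{F})$ and then conjugate $\gamma$ by the corresponding element.

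The natural choice is $g=a_\mu^{-1}$. Checking that $a_\mu \in \mathrm{SO}_{3}(\overline{F})$ is immediate: $a_\mu^{t}J_3 a_\mu=J_3$ because the product of the $(1,1)$ and $(3,3)$ entries is $1$, and $\det a_\mu=1$. Since $a_\mu$ is diagonal, it commutes with $\sigma(a_\mu)$, so
$$g\sigma(g^{-1})=a_\mu^{-1}\sigma(a_\mu)=\sigma(a_\mu)a_\mu^{-1},$$
which is exactly the target cocycle. The associated rational orbit is thus represented by $g^{-1}\gamma g=a_\mu\gamma a_\mu^{-1}$. A direct multiplication, using that the $(i,j)$-entry of $\gamma$ is scaled by $(a_\mu)_{ii}/(a_\mu)_{jj}$, leaves the diagonal entries $x, z, x$ unchanged and sends the $(1,3)$ and $(3,1)$ entries to $\mu y$ and $\mu^{-1}\nu y$ respectively, reproducing the claimed matrix. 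Although $a_\mu$ is only defined over $\overline{F}$, only $\mu$ and $\mu^{-1}$ (and not $\mu^{1/2}$) appear in the output, so the new matrix has $F$-rational entries, as it must, being in the $G_1(\overline{F})$-orbit of $\gamma\in\mathcal{S}(F)$ and representing a rational orbit.

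The remaining step is to verify that the specified ranges for $\mu$ exhaust $D_\gamma$: in type I, since $E/F$ is unramified one has $\mathrm{N}_{E/F}(E^\times)=\mathcal{O}_F^\times\cdot\varpi^{2\mathbb{Z}}$, so $\{1,\varpi\}$ represents $F^\times/\mathrm{N}_{E/F}(E^\times)$; in types II and III, $\{1,\xi^2,\varpi,\xi^2\varpi\}$ represents $F^\times/(F^\times)^2$ because $\xi^2$ is a non-square unit in $F$ (were it a square, $E=F[\xi]$ would degenerate to $F$). No serious obstacle is anticipated: the substantive content already resides in the classification (Lemma \ref{lemma:classification_n=3}) and the cohomological parametrization (Lemma \ref{lemma:endoscopy}), and this corollary simply translates those into explicit matrix representatives via the single conjugation $\gamma \mapsto a_\mu\gamma a_\mu^{-1}$.
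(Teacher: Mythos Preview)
Your proposal is correct and follows essentially the same approach as the paper: the paper's proof simply invokes the lemma that the cocycle $\sigma\mapsto g\sigma(g^{-1})$ corresponds to the orbit $g^{-1}\gamma g$ and refers back to the explicit coboundaries $a_\mu$ produced in the proof of Lemma~\ref{lemma:endoscopy}. Your write-up is in fact more detailed than the paper's two-sentence argument, making explicit the verification that $a_\mu\in\mathrm{SO}_3(\overline{F})$, the commutativity step $a_\mu^{-1}\sigma(a_\mu)=\sigma(a_\mu)a_\mu^{-1}$, the conjugation computation, and the exhaustion of $D_\gamma$ by the listed $\mu$.
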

\begin{proof}
The rational classes are represented by $g^{-1}\gamma g$ with $g\in G_1(\overline{F})$ that represents the corresponding coboundary in $\mathfrak{D}(F,G_1,G_{1\gamma})$. The corollary then follows from the proof of Lemma \ref{lemma:endoscopy}.
\end{proof}

Using Corollary \ref{coro:rational_orbits}, we will identify group elements in $\mathfrak{D}(F,G_1,G_{1\gamma})$ with the representatives $\gamma_\mu$ of the rational orbit they parametrize. To study the endoscopic orbital integral we have to compute $O_{\gamma_\mu}(\mathbbm{1}_{\mathcal{S}(\mathcal{O}_F)})$ for each of the rational classes within a stable class, parametrized as in the corollary.

\section{A preliminary formula for the orbital integrals} \label{sec:OI}
Consider regular semisimple elements in a torus of type I-III. As indicated by Lemma \ref{lemma:classification_n=3}, these are the only cases of interest. The stable orbits of such elements can be represented by elements of the form
$$\left(\begin{smallmatrix}
x & & y\\
& z &\\
\nu y & & x
\end{smallmatrix}\right)$$
with $\nu\in\{1,\xi^2,\varpi,\xi^2\varpi\}$. On the other hand, the rational orbits within $[\gamma]_\mathrm{st}$ are
$$
\gamma_\mu=\left(\begin{smallmatrix}
x & & \mu y\\
& z &\\
\mu^{-1}\nu y & & x
\end{smallmatrix}\right)
$$
with $\mu$ described as in Corollary \ref{coro:rational_orbits}. In particular, we fix the stable orbit representative to be $\gamma=\gamma_1$.

We apply Iwasawa decomposition to write
$G_1(F)=B(F)\mathcal{K}=N(F)A(F)\mathcal{K}$ with $B$ being the Borel subgroup consists of upper-triangular matrices in $G_1$ and
\begin{align*}
N(F) & =\left\{\left(\begin{smallmatrix}
1 & u & -u^2/2\\
& 1 & -u\\
& & 1
\end{smallmatrix}\right)\mid u\in F\right\},\\
A(F) & =\left\{\left(\begin{smallmatrix}
t & & \\
& 1 & \\
& & t^-1
\end{smallmatrix}\right)\mid t\in F^\times\right\},\\
\mathcal{K} & =\mathrm{SO}_3(\mathcal{O}_F).
\end{align*}

For $\gamma\in\mathcal{S}(F)$, $f\in C^\infty\left(\mathcal{S}(F)\right)$, and $\kappa\in\mathfrak{D}(F,G_{1\gamma},G_1)^\mathrm{D}$ (the Pontryagin dual group of $\mathfrak{D}(F,G_{1\gamma},G_1)$ by Lemma \ref{lemma:endoscopy}), 
the $\kappa$-orbital integral in \eqref{SOkap} is equal to 
$$
SO^\kappa_{\gamma}(f)=\sum_{\mu}\kappa(\mu)O_{\gamma_\mu}(f).
$$
When $\kappa=1$, we say that $SO_\gamma^\kappa(f)$ is a \textit{stable (relative) orbital integral}. On the other hand, if $\kappa$ is nontrivial, $SO_\gamma^\kappa(f)$ is an \textit{endoscopic (relative) orbital integral}. The latter is the object of interest here.

For simplicity, we will restrict our test function to be $f=\mathbbm{1}_{\mathcal{S}(\mathcal{O}_F)}$. The Haar measure on $G_1(F)$ will be normalized so that $\mathcal{K}$ has volume $1$ under such measure. We also assign on finite sets $G_{1\gamma}(F)$ the counting measure: $\mathrm{vol}(X,dg_\gamma)=\#X$.

Then,
\begin{align*}
O_{\gamma_\mu}(f) & =\frac{1}{\#G_{1\gamma}(F)}\int\limits_{G_1(F)}\mathbbm{1}_{\mathcal{S}(\mathcal{O}_F)}(g^{-1}\gamma_{\mu}g)dg\\
& =\frac{1}{\#G_{1\gamma}(F)}\int\limits_{F \times F^\times} \mathbbm{1}_{\mathcal{S}(\mathcal{O}_F)}\left(\left(\begin{smallmatrix} t & u & -t^{-1}u^2/2\\ & 1 & t^{-1}u \\ & & t^{-1}\end{smallmatrix}\right)^{-1}
\left(\begin{smallmatrix}
x & & \mu y\\
& z &\\
\mu^{-1}\nu y & & x
\end{smallmatrix}\right)\left(\begin{smallmatrix} t & u & -t^{-1}u^2/2\\ & 1 & -t^{-1}u \\ & & t^{-1}\end{smallmatrix}\right)\right)\frac{du d^\times t}{|t|}.
\end{align*}
Consider the matrix
\begin{align*}
    A:=\left(\begin{smallmatrix} t & u & -t^{-1}u^2/2\\ & 1 & t^{-1}u \\ & & t^{-1}\end{smallmatrix}\right)^{-1}
\left(\begin{smallmatrix}
x & & \mu y\\
& z &\\
\mu^{-1}\nu y & & x
\end{smallmatrix}\right)\left(\begin{smallmatrix} t & u & -t^{-1}u^2/2\\ & 1 & -t^{-1}u \\ & & t^{-1}\end{smallmatrix}\right).
\end{align*}
By the nature of action defined, $A$ is an element of a unitary group $G(F)$. Its determinant is a unit, hence integral. The entries of $A$ (excluding the repeated ones) are:
\begin{enumerate}
\item $x-\frac{1}{2}u^2 \mu^{-1}\nu y$,
\item $tu\mu^{-1}\nu y$,
\item $u^2\mu^{-1}\nu y+z$,
\item $t^2\mu^{-1}\nu y$,
\item $t^{-1}(ux-uz-\frac{1}{2} u^3 \mu^{-1}\nu y)$,
\item $t^{-2}(\mu y-u^2x+u^2z+\frac{1}{4} u^4\mu^{-1}\nu y)$.
\end{enumerate}
Here $z \in \mathrm{U}_{1,E/F}(F)$ is always integral.

Note that $A\in\mathcal{S}(\mathcal{O}_F)$ if and only if all of its entries are in $\mathcal{O}_F$. Thus, for the orbital integral we compute the volume of the set of $u\in F$ and $t\in F^\times$ such that these entries are integral.

The entries $(1)-(4)$ are integral if and only if one has $x\in\mathcal{O}_E$ along with the less trivial conditions:
\begin{align}\label{eqn:pre_trivial_entries}
t^2\mu^{-1}\nu y\in\mathcal{O}_E\quad\text{ and }\quad
u^2\mu^{-1}\nu y\in\mathcal{O}_E.
\end{align}
To simplify notation, we will start writing shorthand $v(t)=m$ and $v(u)=k$.

The conditions in (\ref{eqn:pre_trivial_entries}) are then equivalent to
\begin{align}
2m\geq -v(y)+v(\mu)-v(\nu)\quad\text{ and }\quad 2k\geq -v(y)+v(\mu)-v(\nu).\label{eqn:trivial_entries}
\end{align}
Thus 
\begin{align}\label{eqn:first_integral}
\begin{split}
    &\hspace{.1in}\#G_{1\gamma}(F)\cdot O_{\gamma_{\mu}}(f)\\
    =&\int\limits_{F \times F^\times} \mathbbm{1}_{\mathcal{O}_E^2}\left(t^{-1}(ux-uz-\frac{1}{2} u^3 \mu^{-1}\nu y),t^{-2}(\mu y-u^2x+u^2z+\frac{1}{4} u^4\mu^{-1}\nu y) \right)\\
    &\hspace{.3in}\times\mathbbm{1}_{\mathcal{O}_E^3}\left(t^2\mu^{-1}\nu y,u^2\mu^{-1}\nu y,x \right)\frac{du d^\times t}{|t|}\\
=&\sum_{m}\,q^{m}\int\limits_{F} \mathbbm{1}_{\mathcal{O}_E^2}\left(\frac{ux-uz-\frac{1}{2} u^3 \mu^{-1}\nu y}{\varpi^m},\frac{\mu y-u^2x+u^2z+\frac{1}{4} u^4\mu^{-1}\nu y}{\varpi^{2m}} \right)\\
&\hspace{.6in}\times \mathbbm{1}_{\mathcal{O}_E^2}\left(u^2\mu^{-1}\nu y,x \right)du
\end{split}
\end{align}
with the sum taken for $m\geq \lceil\frac{-v(y)+v(\mu)+v(\nu)}{2}\rceil$. 

Let 
\begin{align*}
J_m(u):=\mathbbm{1}_{\mathcal{O}_E^2}\left(\varpi^{-m}(ux-uz-\frac{1}{2} u^3 \mu^{-1}\nu y),\varpi^{-2m}(\mu y-u^2x+u^2z+\frac{1}{4} u^4\mu^{-1}\nu y) \right).
\end{align*}
If we assume integrality for $u^2\mu^{-1}\nu y$ and $x$ (so that equation (\ref{eqn:first_integral}) does not vanish), then for $k\geq m$ this function takes a simpler form:
\begin{align}
J_m(u)=\mathbbm{1}_{\mathcal{O}_E}(\varpi^{-2m}\mu y).\label{eqn:K-inv}
\end{align}

\begin{lem}\label{lemma:second_integral}
If $x\in \mathcal{O}_E$ and $u^2\mu^{-1}\nu y\in\mathcal{O}_E$, then
\begin{align*}
\int\limits_{F}J_m(u)\mathbbm{1}_{\mathcal{O}_E}\left(u^2\mu^{-1}\nu y \right)du= \sum_{k}\,\int\limits_{\varpi^{k}\mathcal{O}_F-\varpi^{k+1}\mathcal{O}_F}J_m(u)du+\mathbbm{1}_{\mathcal{O}_E}(\varpi^{-2m}\mu y)q^{-m}
\end{align*}
with the sum taken over $m>k\geq\left\lceil\frac{-v(y)+v(\mu)+v(\nu)}{2}\right\rceil$.

Otherwise, if either $x$ or $u^2\mu^{-1}\nu y$ is not in $\mathcal{O}_F$, the integral vanishes.
\end{lem}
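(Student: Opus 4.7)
The plan is to partition the integral over $u \in F$ by the valuation $k = v(u)$ and apply equation (\ref{eqn:K-inv}) to collapse the portion where $k \geq m$ into a single closed-form contribution. First I would dispose of the ``otherwise'' clause: the condition $x \in \mathcal{O}_E$ does not involve $u$, so if $x \notin \mathcal{O}_E$ the factor $\mathbbm{1}_{\mathcal{O}_E}(x)$ already present in (\ref{eqn:first_integral}) annihilates the integrand; similarly, if $u^2\mu^{-1}\nu y \notin \mathcal{O}_E$ then the explicit indicator in the lemma's integrand kills the integrand pointwise.

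For the main assertion, I would write
\[
F \setminus \{0\} = \bigsqcup_{k \in \mathbb{Z}}\left(\varpi^{k}\mathcal{O}_F \setminus \varpi^{k+1}\mathcal{O}_F\right)
\]
(noting that $\{0\}$ is a null set with respect to the additive Haar measure) and restrict to those $k$ compatible with $u^2\mu^{-1}\nu y \in \mathcal{O}_E$, which yields precisely the lower bound recorded in the statement. I would then split this decomposition at $k = m$: the part with $k < m$ is kept unchanged and supplies the sum in the formula, while the part with $k \geq m$ reassembles into the single integral $\int_{\varpi^m\mathcal{O}_F} J_m(u)\,du$.

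On the tail $\{v(u) \geq m\}$, equation (\ref{eqn:K-inv}) asserts $J_m(u) = \mathbbm{1}_{\mathcal{O}_E}(\varpi^{-2m}\mu y)$, which I would verify by inspecting entries (5) and (6) of the conjugated matrix $A$. With $\varpi^{-m}u \in \mathcal{O}_F$ and the standing hypotheses $x, u^2\mu^{-1}\nu y \in \mathcal{O}_E$, every term in entry (5) factors as $(\varpi^{-m}u)$ times an integral quantity (using $z \in \mathcal{O}_E$ and the identity $u^3\mu^{-1}\nu y = u \cdot (u^2\mu^{-1}\nu y)$); in entry (6) each $u$-dependent term factors through $(\varpi^{-m}u)^2$ times an integral quantity (using $u^4\mu^{-1}\nu y = u^2 \cdot (u^2\mu^{-1}\nu y)$), leaving $\varpi^{-2m}\mu y$ as the sole obstruction to integrality. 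Consequently, the tail contributes
\[
\mathbbm{1}_{\mathcal{O}_E}(\varpi^{-2m}\mu y)\cdot \mathrm{vol}(\varpi^m\mathcal{O}_F) = \mathbbm{1}_{\mathcal{O}_E}(\varpi^{-2m}\mu y)\,q^{-m},
\]
which is the trailing summand in the displayed formula.

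The proof is really just a careful partitioning plus (\ref{eqn:K-inv}); the one step that demands attention is the verification of (\ref{eqn:K-inv}), namely that $\varpi^{-2m}\mu y$ is the only potentially non-integral contribution after the $k \geq m$ simplification. This is a direct check on entries (5) and (6) using the two assumed integralities, so I do not expect any genuine obstacle beyond this bookkeeping.
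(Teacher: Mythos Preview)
Your proposal is correct and follows essentially the same approach as the paper: partition the integral by $k=v(u)$, use the lower bound from the integrality condition $u^2\mu^{-1}\nu y\in\mathcal{O}_E$, and collapse the tail $k\geq m$ via (\ref{eqn:K-inv}) to obtain the $\mathbbm{1}_{\mathcal{O}_E}(\varpi^{-2m}\mu y)q^{-m}$ term. Your explicit verification of (\ref{eqn:K-inv}) from entries (5) and (6) is more detailed than the paper, which simply states (\ref{eqn:K-inv}) before the lemma and invokes it in the proof.
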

\begin{proof}
We split the integral on $u$ with respect its valuation in $F$. The lower bound for the summation comes from the condition in (\ref{eqn:trivial_entries}) that $u^2\mu^{-1}\nu y\in\mathcal{O}_E$. The latter term comes from the contribution of $k\geq m$, which is computed by substituting (\ref{eqn:K-inv}) for $J_m(u)$.
\end{proof}

Let $N$ be an integer. We define a function
\begin{align}
    \delta_N:\mathfrak{D}(F,G_{1\gamma},G_1) \longrightarrow \{0,1\}
\end{align}
as follows.

By Lemma \ref{lemma:endoscopy}, there is an isomorphism
\begin{align*}
    \phi:\mathfrak{D}(F,G_{1\gamma},G_1) \overset{\sim}{\longrightarrow} D_\gamma
\end{align*}
where $D_\gamma$ is either $F^\times/N_{E/F}(F^\times)$ or $F^\times/(F^\times)^2$. Particularly, $v(\phi(a))$
is well-defined mod $2$. Let
$$\delta_N(a):=\begin{cases}
1 & \text{if }N+v(\phi(a))\text{ is even},\\
0 & \text{otherwise}.
\end{cases}
$$
In particular, $\delta_N=\delta_{N+2}$ for any $N\in\mathbb{Z}$, and $\delta_N(a)+\delta_N(a\varpi)=1$ for any $a\in D_\gamma$.

Recall the notation of (\ref{eqn:define_invariants}): 
\begin{align*}
    M_{ij}=v(\lambda_i-\lambda_j),\\
    N_{ij}=v(\lambda_i+\lambda_j).
\end{align*}
Those are invariants attached to the stable orbit of $\gamma$. In particular, $v(x)=N_{13}$ and $v(y)=M_{13}-\frac{1}{2}v(\nu)$ are invariants.

\begin{lem}\label{lemma:before_elementary}
We have
\begin{align}
O_{\gamma_{\mu}}(\mathbbm{1}_{\mathcal{S}(\mathcal{O}_F)})=\frac{1}{\#G_{1\gamma}(F)}\Big(\sum_{m\geq m_0}q^{m}&\sum_{k=m_0}^{m-1}\,\int\limits_{\varpi^{k}\mathcal{O}_F-\varpi^{k+1}\mathcal{O}_F}J_m(u)du\notag\\
& +v(y)+\delta_{v_{y}}(\mu)\big(1-v(\nu)\big)\Big)\mathbbm{1}_{\mathcal{O}_E}(x)\label{eqn:elementary}
\end{align}
where $m_0:=\left\lceil\frac{-v(y)+v(\mu)+v(\nu)}{2}\right\rceil.$
\end{lem}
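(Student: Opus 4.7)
The plan is to apply Lemma \ref{lemma:second_integral} directly to the inner $u$-integral appearing in \eqref{eqn:first_integral}. The hypotheses of that lemma---$x\in\mathcal{O}_E$ and $u^2\mu^{-1}\nu y\in\mathcal{O}_E$---account for the overall $\mathbbm{1}_{\mathcal{O}_E}(x)$ factor in the conclusion and for the lower bound $m\geq m_0=\lceil(-v(y)+v(\mu)+v(\nu))/2\rceil$ in the outer sum. Once substituted into \eqref{eqn:first_integral}, the ``off-diagonal'' piece $\sum_{k=m_0}^{m-1}\int$ of Lemma \ref{lemma:second_integral}, multiplied by $q^m$ and summed over $m$, reproduces verbatim the first summand of the claimed formula.

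The ``diagonal'' correction $\mathbbm{1}_{\mathcal{O}_E}(\varpi^{-2m}\mu y)\,q^{-m}$ in Lemma \ref{lemma:second_integral}, when multiplied by $q^m$ and summed over $m\geq m_0$, collapses to
$$\sum_{m\geq m_0}\mathbbm{1}_{\mathcal{O}_E}(\varpi^{-2m}\mu y) \;=\; \#\{\,m\in\mathbb{Z}:m_0\leq m\leq\lfloor(v(\mu)+v(y))/2\rfloor\,\}.$$
The remaining task is to show this cardinality equals $v(y)+\delta_{v_y}(\mu)(1-v(\nu))$. I would split on the two values $v(\nu)\in\{0,1\}$ and the parity of $v(\mu)+v(y)$, giving four cases. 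In each case, elementary manipulations of $\lceil\cdot\rceil$ and $\lfloor\cdot\rfloor$---using that $v(\mu)-v(y)$ and $v(\mu)+v(y)$ share a parity---reduce the count to either $v(y)+1$ or $v(y)$. One finds $v(y)+1$ precisely when $v(\nu)=0$ and $v(\mu)+v(y)$ is even, and $v(y)$ in the remaining three cases. Recalling that $\delta_{v_y}(\mu)=1$ iff $v(y)+v(\mu)$ is even (since $v(\phi(\mu))\equiv v(\mu)\pmod 2$ under the identification in Lemma \ref{lemma:endoscopy}), this matches $v(y)+\delta_{v_y}(\mu)(1-v(\nu))$ on the nose.

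The main obstacle is purely bookkeeping: the four-case parity analysis above. There is no conceptual difficulty beyond Lemma \ref{lemma:second_integral} itself, which already carries out the substantive separation of the $u$-integral into a $k<m$ piece and a $k\geq m$ tail via \eqref{eqn:K-inv}; the present lemma simply repackages this together with a finite count of the diagonal contributions.
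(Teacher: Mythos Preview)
Your proposal is correct and follows essentially the same approach as the paper: substitute Lemma \ref{lemma:second_integral} into \eqref{eqn:first_integral}, keep the $k<m$ piece as is, and evaluate the diagonal sum $\sum_{m\geq m_0}\mathbbm{1}_{\mathcal{O}_E}(\varpi^{-2m}\mu y)$ as the number of integers $m$ with $-v(y)+v(\mu)+v(\nu)\le 2m\le v(y)+v(\mu)$. The paper simply asserts this count equals $v(y)+\delta_{v(y)}(\mu)(1-v(\nu))$ without spelling out the parity cases, whereas you make the four-case check explicit; otherwise the arguments are identical.
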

\begin{proof}
With (\ref{eqn:first_integral}) and Lemma \ref{lemma:second_integral}, it remains to show
$$\sum_{m\geq\lceil(-v(y)+v(\mu)+v(\nu))/2\rceil}\mathbbm{1}_{\mathcal{O}_E}(\varpi^{-2m}\mu y)=v(y)+\delta_{v(y)}\left(\mu\right)\left(1-v(\nu)\right).$$
Indeed, this is counting the number of solutions $m$ to
$$v(y)+v(\mu)\geq 2m \geq -v(y)+v(\mu)+v(\nu),$$
which is described in the right-hand side of the equation.
\end{proof}


Now, we are left with the core of this computation, which is to integrate $J_m(u)$ in Lemma \ref{lemma:before_elementary}. For computational purpose, it is convenient to rewrite the pair of congruences in $\mathcal{O}_E$ contained in the definition of $J_m(u)$ as a quadruple of congruences over $\mathcal{O}_F$. This will allow us to exploit the condition that $u \in F$. We leave the orbital integral for a moment and write $z$ in terms of a basis of $E$ over $F$ suitable for our purpose.

We have $\Tr(x\overline{y})=0$. So if $x$ and $y$ are both nonzero, then they are $F$-linearly independent and hence $\{x,y\}$ forms an basis of $E$ over $F$. Let us record this fact as
\begin{lem}\label{lemma:independence}
When the coefficients $x$, $y$ are both nonzero, $\{x,y\}$ forms a basis for $E$ over $F$.\qed
\end{lem}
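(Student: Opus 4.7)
The plan is to exploit the two constraints that the representative $\gamma \in T_\nu(F) \subset G(F) \cap \mathcal{S}(F)$ must satisfy: being in $\mathcal{S}$ is automatic for elements of the form \eqref{eqn:canonical_representative}, but the condition $\gamma \in G(F) = \mathrm{U}_3(F)$ forces relations among the entries $x, y, z$. The key identity to extract is $\Tr_{E/F}(x\bar{y})=0$, which is exactly the fact asserted in the sentence preceding the lemma. The rest of the argument is then a one-line $F$-dimension count.

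First I would unpack $\gamma \in \mathrm{U}_3(F)$. Since $\gamma \in \mathcal{S}$ gives $\gamma = \sigma(\gamma)^{-1}$, combining this with the defining condition $g = J_3 \bar g^{-t} J_3^{-1}$ for $\mathrm{U}_3$ yields $\gamma\bar\gamma = I$. Multiplying the matrix in \eqref{eqn:canonical_representative} by its Galois conjugate (using $\bar\nu=\nu$, since $\nu\in F$), the $(1,3)$-entry of $\gamma\bar\gamma$ is $x\bar y + y\bar x = \Tr_{E/F}(x\bar y)$, so the vanishing of that entry gives $\Tr_{E/F}(x\bar y) = 0$. (As a byproduct the diagonal entries recover $\mathrm{N}_{E/F}(x) + \nu\, \mathrm{N}_{E/F}(y) = 1$ and $\mathrm{N}_{E/F}(z) = 1$, which I will not need here.)

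Next I would rule out $F$-linear dependence. Suppose $x, y \in E^\times$ and $y = cx$ for some $c \in F^\times$. Then $x\bar y = \bar c \, x\bar x = c\,\mathrm{N}_{E/F}(x) \in F$, so $\Tr_{E/F}(x\bar y) = 2c\,\mathrm{N}_{E/F}(x)$. Since the residue characteristic is not $2$ and $c, x$ are both nonzero, this contradicts $\Tr_{E/F}(x\bar y)=0$. Hence $x$ and $y$ are $F$-linearly independent, and because $[E:F]=2$, two linearly independent elements form a basis.

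There is no real obstacle here; the only thing to be careful about is making the implicit step --- that being in $\mathcal{S} \cap G$ forces $\gamma\bar\gamma = I$, and hence $\Tr_{E/F}(x\bar y) = 0$ --- fully explicit, since the lemma is quoted as a consequence of a sentence rather than given its own computation. Everything else is elementary linear algebra over $E/F$.
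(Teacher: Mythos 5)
Your proof is correct and takes essentially the same route as the paper: the paper simply records $\mathrm{Tr}_{E/F}(x\overline{y})=0$ (the consequence of $\gamma\overline{\gamma}=\mathbbm{1}$ for $\gamma\in T_\nu(F)\subset\mathcal{S}(F)\cap G(F)$) in the sentence before the lemma and concludes $F$-linear independence, hence a basis since $[E:F]=2$. Your only addition is to make explicit the derivation of $\mathrm{Tr}_{E/F}(x\overline{y})=0$ from the $(1,3)$-entry of $\gamma\overline{\gamma}=\mathbbm{1}$ and the computation $\mathrm{Tr}_{E/F}(x\overline{cx})=2c\,\mathrm{N}_{E/F}(x)\neq 0$, both of which the paper leaves implicit.
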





\begin{defn}\label{def:quad_symbol}
For any $F$-algebra $R$ and any nonzero element $a\in\overline{F}\otimes_F R$, we define a quadratic symbol
$$\left(\frac{a}{R}\right):=\begin{cases}
1 & \text{if }a\text{ is a square in }R^\times,\\
0 & \text{if }a\in R-R^\times,\\
-1 & \text{otherwise}.
\end{cases}$$
\end{defn}
\begin{remark}
The definition above does not require $a$ to be an element of $R$. By definition $\left(\frac{a}{R}\right)=-1$ whenever $a\notin R$.
\end{remark}
When $E/F$ is a finite unramified field extension and $a\in E$, by Hensel's lemma we know that $\left(\frac{a}{E}\right)=1$ if and only if $v_E(a)$ is an even integer, and the leading coefficient of $a$ is a square in $k_E$. In particular, it is multiplicative on $\mathcal{O}_F^\times$.
\begin{lem}\label{lemma:X=0}
Suppose $x=0$, then we have:
$$O_{\gamma_{\mu}}(\mathbbm{1}_{\mathcal{S}(\mathcal{O}_F)}) =\frac{1}{\#G_{1\gamma}(F)}\delta_{v(y)}(\mu)\left(1-v(\nu)\right)$$
if $\nu\neq 1$.
$$O_{\gamma_{\mu}}(\mathbbm{1}_{\mathcal{S}(\mathcal{O}_F)}) =\frac{1}{4}\left(\left(\left(\frac{2\mu}{F}\right)+1\right)\lfloor\frac{M_{23}}{2}\rfloor+\left(\left(\frac{-2\mu}{F}\right)+1\right)\lfloor\frac{M_{12}}{2}\rfloor+\delta_{M_{13}}(\mu)\right)$$
if $\nu=1$.
\end{lem}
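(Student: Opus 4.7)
The plan is to apply Lemma \ref{lemma:before_elementary} with $x = 0$ and analyze the remaining double sum
\[
S := \sum_{m \geq m_0} q^m \sum_{k = m_0}^{m-1} \int_{\varpi^k \mathcal{O}_F - \varpi^{k+1}\mathcal{O}_F} J_m(u)\,du,
\]
case by case on $\nu$. The key algebraic observation is that when $x = 0$, the two components of $J_m(u)$ are exactly the conditions $v_E(P(u)) \ge 2m$ and $v_E(P'(u)) \ge m$, where $P(u) := \mu y + u^2 z + \tfrac{1}{4} u^4 \mu^{-1}\nu y$, because $P'(u) = -2u Q(u)$ for the $Q$ appearing in $J_m$. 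I would also first record the necessary condition coming from $\gamma \in G(F)$ (i.e.\ $\gamma\overline{\gamma}=\mathbbm{1}$), which with $x=0$ reduces to $\nu N(y) = 1$.

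For $\nu \in \{\varpi, \xi^2\varpi\}$, the identity $\nu N(y) = 1$ gives $v(N(y)) = -v(\nu)$ odd, contradicting $v(N(y)) = 2v(y)\in 2\mathbb{Z}$; hence no such $\gamma$ exists and $O_{\gamma_\mu} = 0$, matching the formula since $1-v(\nu) = 0$. For $\nu = \xi^2$, we have $N(y) = \xi^{-2}$ so $v(y) = 0$, and the eigenvalues $\lambda_1 = \xi y$, $\lambda_2 = z$, $\lambda_3 = -\xi y$ satisfy $N(\lambda_1) = -1$, $N(\lambda_2) = N(\lambda_3) = 1$. Since the residue characteristic is not $2$, $\overline{N(\lambda_i) - N(\lambda_j)}$ is nonzero in $k_F$, which forces $M_{12} = M_{23} = 0$. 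I would then argue $S = 0$: for $k \geq 1$ the term $\mu y$ dominates in $P(u)$, giving $v_E(P(u)) = v(\mu) < 2m$; for $k = m_0$, imposing that any root $T \in k_F$ of $\overline{P}$ is fixed by $\mathrm{Gal}(k_E/k_F)$ leads, via the reduction of $y\overline{y} = \xi^{-2}$ and $z\overline{z} = 1$, to $\overline{z}^2 = \overline{\xi}^2\overline{y}^2$, i.e.\ $M_{12} > 0$ or $M_{23} > 0$, a contradiction. Combining $S = 0$ with the constant term $\delta_{v(y)}(\mu)(1 - v(\nu)) = \delta_0(\mu)$ and $\#G_{1\gamma}(F) = 2$ gives the claimed formula.

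For $\nu = 1$, now $y, z \in \mathrm{U}_1(F)$, $v(y) = 0$, and $\#G_{1\gamma}(F) = 4$. Again $k \geq 1$ kills $J_m(u)$, so $S$ reduces to the $k = 0$ contribution $\sum_{m \geq 1} q^m \int_{\mathcal{O}_F^\times} J_m(u)\,du$. I would factor the quartic $P(u)$ over $\overline{F}$ with roots $\pm\sqrt{a} \pm \sqrt{b}$, where $a = -\mu(y+z)/y$ and $b = \mu(y-z)/y$, using the identity $\sqrt{z \pm \sqrt{z^2 - y^2}} = \sqrt{(z+y)/2} \pm \sqrt{(z-y)/2}$; the pairs of roots satisfy $r_1 r_2 = -2\mu$ and $r_1 r_3 = 2\mu$. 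The integral localizes around the $F$-rational roots of $P$; since $v(a) = v(\mu) + M_{23}$ and $v(b) = v(\mu) + M_{12}$, the sizes of the approximating $p$-adic neighborhoods give rise to $\lfloor M_{23}/2\rfloor$ and $\lfloor M_{12}/2\rfloor$ respectively. Rationality of $\sqrt{a}$ reduces, via $N((y+z)/y) = 2 + \mathrm{Tr}(z\overline{y}) \in F^\times$ being a unit, to whether $-2\mu$ is a square in $F$ (giving $\left(\tfrac{-2\mu}{F}\right)$ paired with $\lfloor M_{12}/2\rfloor$ via $r_1 r_2 = -2\mu$); symmetrically for $\sqrt{b}$ and $\left(\tfrac{2\mu}{F}\right)$ with $\lfloor M_{23}/2\rfloor$. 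Summing the geometric contributions, adding the constant $\delta_{M_{13}}(\mu) = \delta_0(\mu)$, and dividing by $4$ yields the stated formula.

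The main obstacle will be the $\nu = 1$ case: verifying that Hensel lifting of the approximate roots $\pm\sqrt{a}\pm\sqrt{b}$ to $F$ produces precisely the claimed quadratic symbols, and that the weighted count of such roots (via the factor $q^m$ and the volume of neighborhoods of size $q^{-m}$) assembles cleanly into the floor-function contributions $\lfloor M_{12}/2\rfloor$ and $\lfloor M_{23}/2\rfloor$. The pairing structure $r_1 r_2 = -2\mu$, $r_1 r_3 = 2\mu$ suggests that the two square classes contribute asymmetrically, correlating with the ordering of $M_{12}$ and $M_{23}$ exactly as required by the formula.
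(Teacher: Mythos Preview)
Your treatment of $\nu \in \{\varpi, \xi^2\varpi\}$ and $\nu = \xi^2$ is essentially correct (apart from a harmless slip: $N(\lambda_3) = -1$, not $1$), though for $\nu = \xi^2$ you take a longer route than the paper. The paper's argument for $\nu \ne 1$ is uniform and very short: dividing the congruence $v(P(u)) \ge 2m$ by $u^{2}$ gives $ay \equiv -z \pmod{\varpi^{2m-2k}}$ with $a := \tfrac{\mu}{u^{2}} + \tfrac{u^{2}}{4\mu}\nu \in F$; taking norms and using $\nu N(y) = 1$, $N(z) = 1$ yields $\nu \equiv a^{2}$, which is impossible for $\nu = \xi^{2}$ and forces $a \equiv \pm 1$ for $\nu = 1$. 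You never exploit that this particular combination lies in $F$, and that is the missing key.

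For $\nu = 1$ your proposal has a genuine gap. The polynomial $P(u) = \mu y + u^{2} z + \tfrac{1}{4\mu}u^{4} y$ has coefficients in $E$, not in $F$, so the phrase ``the integral localizes around the $F$-rational roots of $P$'' does not mean what you need it to mean. An $F$-rational root of $P$ would also be a root of $\bar P$, and a short computation shows $P$ and $\bar P$ share a root only when $z/y \in F$, i.e.\ $z = \pm y$; so generically $P$ has \emph{no} $F$-rational roots at all, yet the integral is nonzero whenever $M_{12}$ or $M_{23}$ is positive. What actually happens is that $u \in F$ can make $v_E(P(u))$ large by being $p$-adically close (in $\bar F$) to a root $r_i \notin F$; equivalently, the right polynomial to analyze over $F$ is the degree-$8$ norm $P\bar P \in F[u]$, not $P$ itself. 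The paper bypasses this entirely: from $a \equiv \pm 1 \pmod{\varpi^{2m}}$ (via the norm trick above) one gets $y \equiv \mp z \pmod{\varpi^{2m}}$ directly, and combining with the $P'$-condition $z + \tfrac{u^{2}}{2\mu}y \equiv 0 \pmod{\varpi^{m}}$ yields $u^{4} \equiv 4\mu^{2} \pmod{\varpi^{m}}$, hence $u^{2} \equiv \pm 2\mu$, with the sign correlated to whether $M_{23}$ or $M_{12}$ bounds $2m$. Your identities $r_{1}r_{2} = -2\mu$, $r_{1}r_{3} = 2\mu$ are correct and hint at this correlation, but they do not by themselves justify the localization you claim.
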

\begin{proof}
Note that $x=0$ implies that $\nu y^2$ is a unit, which can only happens when $\nu=1$ or $\xi^2$. Furthermore, $m$ and $k$ in the indices for the sums in (\ref{eqn:elementary}) are non-negative.

We thus have
$$
(\mu y-u^2z+\frac{1}{4} u^4\mu^{-1}\nu y)\in \varpi^{2m}\mathcal{O}_E.
$$
In formula (\ref{eqn:elementary}) we also have $u^2\mu^{-1}\nu y\in\mathcal{O}_E$. When $m>k$, these imply
$$(\frac{\mu}{u^2}-\frac{1}{4}u^2\mu^{-1}\nu)y\equiv z\pmod{\varpi^{2m-2k}}$$
and so $ay\equiv z\pmod{\varpi^{2m-2k}}$ for some $a\in F^\times$. Since $\gamma_{\mu} \in G(F)$, we have
$$\nu\mathrm{N}(y)\equiv\nu a^{-2}\equiv 1\pmod{\varpi^{2m-2k}}.$$
Thus when $x=0$, orbital integral vanishes on the part for $m>k$ unless $\nu=1$. The first assertion follows from Lemma \ref{lemma:before_elementary}.

Now we assume $\nu=1$. Then $a^2\equiv 1\pmod{\varpi^{2m-2k}}$, so we have
$$y\equiv\pm z\pmod{\varpi^{2m-2k}}.$$

Another condition for the nonvanishing of the orbital integral is
$$(uz+\frac{1}{2}u^3\mu^{-1}\nu y)\in\varpi^m\mathcal{O}_E.$$
Again, by the assumption applied in formula (\ref{eqn:elementary}), this is equivalent to
$$z+\frac{1}{2}u^2\mu^{-1}\nu y\equiv 0\pmod{\varpi^{m-k}}.$$
Thus
$$\frac{1}{2}u^2\mu^{-1} y\equiv(\frac{1}{4}u^2\mu^{-1}-\frac{\mu}{u^2})y\pmod{\varpi^{m-k}}.$$
Equivalently, we have
$$u^4\equiv 4\mu^2\pmod{\varpi^{m+k+v(\mu)}}.$$
Comparing the valuation of both sides, we conclude that $v(\mu)=0$ and $k=0$. The equation in $u$ above is equivalent to
$$u^2\equiv \pm2\mu\pmod{\varpi^{m}},$$
which admit solutions only when $\left(\frac{2\mu}{F}\right)=1$ or $\left(\frac{-2\mu}{F}\right)=1$.

Suppose $\left(\frac{2\mu}{F}\right)=1$. Then $u$ is determined pairwise, modulo $\varpi^m$, and
$$y\equiv-z\pmod{\varpi^{2m}}.$$
Therefore, This part contribute $\lfloor\frac{M_{23}}{2}\rfloor$ to the sum. Likewise, the contribution from the part existing when $\left(\frac{-2\mu}{F}\right)=1$ is $\lfloor\frac{M_{12}}{2}\rfloor$.

The formula for the orbital integral when $\nu=1$ follows from Lemma \ref{lemma:before_elementary}.
\end{proof}

\begin{cor}
Suppose $x=0$, then the stable orbital integrals are
$$SO_{\gamma}^{\kappa_1}(\mathbbm{1}_{\mathcal{S}(\mathcal{O}_F)})=\frac{1}{2}(-1)^{M_{13}}$$
if $\nu=\xi^2$.
$$SO_{\gamma}^{\kappa_{\xi^2}}(\mathbbm{1}_{\mathcal{S}(\mathcal{O}_F)})=\frac{1}{2}\left(\lfloor\frac{M_{12}}{2}\rfloor+\lfloor\frac{M_{23}}{2}\rfloor+(-1)^{M_{13}}\right),$$
$$SO_{\gamma}^{\kappa_{s}}(\mathbbm{1}_{\mathcal{S}(\mathcal{O}_F)})=\frac{1}{2}\left(\left(\frac{2}{F}\right)\lfloor\frac{M_{23}}{2}\rfloor+\left(\frac{-2}{F}\right)\lfloor\frac{M_{12}}{2}\rfloor\right)$$
for $s=\varpi$ or $\xi^2\varpi$ if $\nu=1$.
$$SO_{\gamma}^\kappa(\mathbbm{1}_{\mathcal{S}(\mathcal{O}_F)})=0$$
for any nontrivial $\kappa$ if $\nu=\varpi$ or $\xi^2\varpi$.
\end{cor}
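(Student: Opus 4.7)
The plan is to apply Lemma \ref{lemma:X=0} term-by-term to the definition $SO^\kappa_\gamma(\mathbbm{1}_{\mathcal{S}(\mathcal{O}_F)}) = \sum_\mu \kappa(\mu) O_{\gamma_\mu}(\mathbbm{1}_{\mathcal{S}(\mathcal{O}_F)})$ and evaluate the resulting character sums over $\mu \in D_\gamma$. I would start by pinning down $\#G_{1\gamma}(F)$ via Lemma \ref{lemma:stabilizer}: the second generator listed there involves $\nu^{\pm 1/2}$, which is $F$-rational iff $\nu \in (F^\times)^2$; this holds only for $\nu = 1$, giving $\#G_{1\gamma}(F) = 4$, whereas $\#G_{1\gamma}(F) = 2$ for $\nu \in \{\xi^2, \varpi, \xi^2\varpi\}$.

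Two of the three cases essentially dispose of themselves. For $\nu = \xi^2$, Lemma \ref{lemma:X=0} reduces to $O_{\gamma_\mu} = \tfrac{1}{2}\delta_{M_{13}}(\mu)$ (using $v(y) = M_{13}$ since $v(\nu) = 0$), and the direct two-term evaluation $\kappa_1(1)\delta_{M_{13}}(1) + \kappa_1(\varpi)\delta_{M_{13}}(\varpi)$ produces $(-1)^{M_{13}}$. For $\nu \in \{\varpi, \xi^2\varpi\}$, the prefactor $(1 - v(\nu))$ is already zero, so every $O_{\gamma_\mu}$ vanishes and $SO^\kappa_\gamma = 0$ for every $\kappa$, trivial or not.

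The substantive case is $\nu = 1$. Here the identity $\sum_\mu \kappa_s(\mu) = 0$ for nontrivial $\kappa_s$ kills the constant contributions (the $+1$'s inside the parentheses of Lemma \ref{lemma:X=0}), leaving three character sums to compute: $S_\pm(s) := \sum_\mu \kappa_s(\mu) \left(\tfrac{\pm 2 \mu}{F}\right)$ and $S_\delta(s) := \sum_\mu \kappa_s(\mu) \delta_{M_{13}}(\mu)$. The main obstacle is that the quadratic symbol is not a character of $F^\times/(F^\times)^2$ (an index-$4$ group), so I would tabulate the relevant values directly using two observations: $\xi^2 = -\mathrm{N}(\xi)$ is a non-square unit in $F^\times$ (if $\xi^2 = \alpha^2$ for some $\alpha \in F^\times$ then $\xi = \pm \alpha \in F$, contradicting $\xi \in E \setminus F$), while $\pm 2\varpi$ and $\pm 2\xi^2\varpi$ have odd valuation and are therefore never squares. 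A short four-row table then yields $S_\pm(\xi^2) = 2$ and $S_\pm(\varpi) = S_\pm(\xi^2\varpi) = 2 \left(\tfrac{\pm 2}{F}\right)$. Splitting $\delta_{M_{13}}$ according to the parity of $M_{13}$ gives $S_\delta(\xi^2) = 2(-1)^{M_{13}}$ and $S_\delta(\varpi) = S_\delta(\xi^2\varpi) = 0$. Dividing by $\#G_{1\gamma}(F) = 4$ reproduces the three stated formulas, with the vanishing of $S_\delta$ for $s \in \{\varpi, \xi^2\varpi\}$ accounting for the absence of a $\delta$-contribution on those two lines of the corollary.
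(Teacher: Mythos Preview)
Your proposal is correct and follows essentially the same approach the paper intends: the corollary is stated immediately after Lemma~\ref{lemma:X=0} without proof, and the character-sum identities you compute (in particular $\sum_\mu \kappa_s(\mu)\delta_N(\mu) = 2(-1)^N$ for $s=\xi^2$ and $0$ for $s=\varpi,\xi^2\varpi$) are exactly those the paper records and uses in the parallel Corollaries~\ref{cor:kappa1} and~\ref{cor:kappa2}. One small remark: your determination of $\#G_{1\gamma}(F)$ via Lemma~\ref{lemma:stabilizer} is slightly informal, since that lemma describes $G_{1\gamma}(\overline{F})$ through an $\overline{F}$-identification rather than the $F$-points directly; the cleanest justification is to solve $z^2=1$, $x^2+\nu y^2=1$, $z(x^2-\nu y^2)=1$ over $F$ as in the proof of that lemma, which shows the $z=-1$ branch requires $y^2=\nu^{-1}$ and hence contributes only when $\nu\in(F^\times)^2$, confirming your counts.
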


Note that $x+{\nu}^{1/2}y=x-{\nu}^{1/2}y$ when $y=0$. Thus $\gamma$ being regular implies $y\neq 0$. Since orbital integrals when $x=0$ are computed in Lemma \ref{lemma:X=0}, for the rest of the paper we will only be considering $\gamma=\begin{psmatrix}
x & & y\\
& z & \\
\nu y & & x
\end{psmatrix}$ that are regular with $x,y$ nonzero.

Then, by Lemma \ref{lemma:independence} we know that $z\in E$ can be written uniquely as \begin{align} \label{z-decomp}
    z=z_x x+z_y y
\end{align}
with $z_x,z_y\in F$.  Recall that as in  \eqref{def_eigenvalues} we write the eigenvalues of $\gamma_{\nu}$ as
$$
\lambda_1:=x + {\nu}^{1/2}y,\quad\lambda_2:=z,\quad \lambda_3:=x-{\nu}^{1/2}y.
$$
These eigenvalues are invariants of the stable orbit of $\gamma_{\nu}$ under $G_1(\bar{F})$-action. By abuse of notation, we will regard $x$ and $y$ as the invariants defined by $\frac{1}{2}(\lambda_1+\lambda_3)$ and $\frac{1}{2\nu}(\lambda_1-\lambda_3)$ since these coincide with the coordinates $x$ and $y$ of $\gamma$ for our choice of representatives.

Note that $\nu \in \{1,\xi^2,\varpi,\xi^2 \varpi\}$ is also an invariant (see the remark after Lemma \ref{lemma:classification_n=3}) to the stable orbit.

We now relate $z_x, z_y$ to the eigenvalues of $\gamma$:
\begin{lem}\label{lemma:unitary_invariant_valuation}
We have
$$
z_x\pm 1=\frac{(z\pm x+\nu^{1/2}y)(z\pm x-\nu^{1/2}y)}{2xz}$$
and
$$z_y\pm\nu^{1/2}=\frac{(z-x\pm\nu^{1/2}y)(z+x\pm \nu^{1/2}y)}{2yz}.$$

In particular, we have
\begin{align*}
v(z_x-1) & =M_{12}+M_{23}-N_{13},\\
v(z_y+\nu^{1/2}) & =M_{23}+N_{12}-M_{13}+\frac{1}{2}v(\nu),\\
v(z_y-\nu^{1/2}) & =M_{12}+N_{23}-M_{13}+\frac{1}{2}v(\nu).
\end{align*}
\end{lem}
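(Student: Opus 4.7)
The plan is to reduce both displayed identities to the single closed-form claim
\[ z_x = \frac{z^2 + x^2 - \nu y^2}{2xz}, \qquad z_y = \frac{z^2 - x^2 + \nu y^2}{2yz}, \]
and then extract the valuations by rewriting the numerators as products of $\lambda_i \pm \lambda_j$.

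Granted these closed forms, the identities in the lemma follow immediately from the algebraic factorizations $(z \pm x)^2 - \nu y^2 = (z \pm x + \nu^{1/2}y)(z \pm x - \nu^{1/2}y)$ and $(z \pm \nu^{1/2}y)^2 - x^2 = (z \pm \nu^{1/2}y + x)(z \pm \nu^{1/2}y - x)$, combined with the trivial relations $z_x \pm 1 = [(z \pm x)^2 - \nu y^2]/(2xz)$ and $z_y \pm \nu^{1/2} = [(z \pm \nu^{1/2}y)^2 - x^2]/(2yz)$. To establish the closed forms themselves, I would invoke the uniqueness of the decomposition $z = z_x x + z_y y$ with $z_x, z_y \in F$ from Lemma~\ref{lemma:independence}: the two candidate fractions visibly sum to $z$ (they share the common denominator $2z$, and their numerators sum to $2z^2$), so it suffices to check that each candidate is Galois-invariant. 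The condition $\gamma \in G$ forces $\bar{z} = 1/z$, $\bar{x} = x/(x^2 - \nu y^2)$, and $\bar{y} = -y/(x^2 - \nu y^2)$; substituting these into the Galois conjugate of $(z^2 + x^2 - \nu y^2)/(2xz)$ and simplifying recovers the original expression, and the case of $z_y$ is analogous. This invariance verification is the only mildly delicate step in the argument.

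For the valuations, note first that $z \in \mathrm{U}_{1,E/F}(F)$ satisfies $z\bar{z} = 1$, and since $E/F$ is unramified we have $v(\bar{z}) = v(z)$, forcing $v(\lambda_2) = 0$. Using $2xz = (\lambda_1 + \lambda_3)\lambda_2$ and $2yz = (\lambda_1 - \lambda_3)\lambda_2/\nu^{1/2}$, the first step rewrites the identities in eigenvalue form as
\[ z_x - 1 = \frac{(\lambda_2 - \lambda_1)(\lambda_2 - \lambda_3)}{(\lambda_1 + \lambda_3)\lambda_2}, \qquad z_y \pm \nu^{1/2} = \frac{\nu^{1/2}(\lambda_2 \mp \lambda_3)(\lambda_2 \pm \lambda_1)}{(\lambda_1 - \lambda_3)\lambda_2}. \]
Taking $v(\cdot)$ and invoking $v(\nu^{1/2}) = v(\nu)/2$ yields the three valuation formulas stated in the lemma.
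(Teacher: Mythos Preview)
Your argument is correct, and it reaches the same closed forms $z_x=(z^2+x^2-\nu y^2)/(2xz)$ and $z_y=(z^2-x^2+\nu y^2)/(2yz)$ as the paper, after which the factorizations and valuation computations are identical. The route to those closed forms, however, is genuinely different. The paper expands $x,y,z$ in the basis $\{1,\xi\}$, writes $z_x,z_y$ as Cramer-type ratios, rewrites those as $(\bar z y-z\bar y)/(\bar x y-x\bar y)$ and $(\bar x z-x\bar z)/(\bar x y-x\bar y)$, and then simplifies each numerator and denominator separately using the unitary identities $(x\pm\nu^{1/2}y)(\bar x\pm\nu^{1/2}\bar y)=1$. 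You instead invoke the uniqueness of the decomposition in Lemma~\ref{lemma:independence}: you observe that the two candidate fractions sum to $z$ and then check directly, using $\bar z=z^{-1}$, $\bar x=x/(x^2-\nu y^2)$, $\bar y=-y/(x^2-\nu y^2)$, that each is $\theta$-invariant and hence lies in $F$. This is shorter and avoids the auxiliary computations with $\bar z y-z\bar y$, at the cost of having to guess the closed form in advance; the paper's computation, by contrast, derives the closed form. For the valuation formulae you also make explicit the step $v(\lambda_2)=0$ and the eigenvalue rewriting of the denominators $2xz$ and $2yz$, which the paper leaves implicit.
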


\begin{proof}

Write $x=x_1+x_2\xi$, $y=y_1+y_2\xi$ and $z=z_1+z_2\xi$. Then the coefficients $z_x$ and $z_y$ can be obtained from
$$z=\begin{pmatrix}
z_1 & z_2
\end{pmatrix}\begin{pmatrix}
1\\
\xi
\end{pmatrix}=\begin{pmatrix}
z_1 & z_2
\end{pmatrix}\begin{pmatrix}
x_1 & x_2\\
y_1 & y_2
\end{pmatrix}^{-1}\begin{pmatrix}
x\\
y
\end{pmatrix}=\begin{pmatrix}
z_x & z_y
\end{pmatrix}\begin{pmatrix}
x\\ y
\end{pmatrix}$$
which implies that
\begin{align*}
z_x & =\frac{z_1y_2-z_2y_1}{x_1y_2-x_2y_1},\\
z_y & =\frac{z_2x_1-z_1x_2}{x_1y_2-x_2y_1}.
\end{align*}
Tautologically,
 \begin{align*}
\overline{z}y-z\overline{y} & =\frac{1}{2\nu^{1/2}}\left(\overline{z}\left((x+\nu^{1/2}y)-(x-\nu^{1/2}y)\right)-z\left((\overline{x}+\nu^{1/2}\overline{y})-(\overline{x}-\nu^{1/2}\overline{y})\right)\right)\\
& =\frac{1}{2\nu^{1/2}}\left(\overline{z}(x+\nu^{1/2}y)-z(\overline{x}+\nu^{1/2}\overline{y})+z(\overline{x}+\nu^{1/2}\overline{y})-\overline{z}(x+\nu^{1/2}y)\right).
\end{align*}

Since $\gamma\in\mathrm{U}_3(F)$, we have 
$$(x+\nu^{1/2} y)(\overline{x}+\nu^{1/2}\overline{y})=(x-\nu^{1/2}y)(\overline{x}-\nu^{1/2}\overline{y})=1.$$
Thus
\begin{align*}
\overline{x}y-x\overline{y}=2\overline{x}y & =\frac{1}{2\nu^{1/2}}\left((\overline{x}+\nu^{1/2}\overline{y})+(\overline{x}-\nu^{1/2}\overline{y})\right)\left((x+\nu^{1/2} y)-(x-\nu^{1/2} y)\right)\\
& =\frac{1}{2\nu^{1/2}}(\overline{x}+\nu^{1/2}\overline{y})(x-\nu^{1/2} y)-(\overline{x}-\nu^{1/2}\overline{y})(x+\nu^{1/2} y).
\end{align*}
We can further rearrange as
\begin{align*}
\overline{z}y-z\overline{y} & =\frac{1}{2\nu^{1/2}}\cdot\left(\frac{(x+\nu^{1/2} y)^2-z^2}{z(x+\nu^{1/2} y)}-\frac{(x-\nu^{1/2} y)^2-z^2}{z(x-\nu^{1/2} y)}\right),\\
\overline{x}y-x\overline{y} & =\frac{1}{2\nu^{1/2}}\cdot\frac{(x+\nu^{1/2} y)^2-(x-\nu^{1/2} y)^2}{(x+\nu^{1/2} y)(x-\nu^{1/2} y)}.
\end{align*}
Hence
$$z_x=\frac{x^2-\nu y^2+z^2}{2xz}.$$
Analogously, for $z_y$ we write
$$z_y=\frac{\overline{x}z-x\overline{z}}{\overline{x}y-x\overline{y}}$$
and compute the numerator:
$$\overline{x}z-x\overline{z}=\frac{1}{2}\cdot\left(\frac{z^2-(x+\nu^{1/2}y)^2}{z(x+\nu^{1/2}y)}+\frac{z^2-(x-\nu^{1/2}y)^2}{z(x-\nu^{1/2}y)}\right).$$
Then, we have
$$z_y=\frac{-x^2+\nu y^2+z^2}{2yz}.$$

A simple computation shows that
$$z_x\pm 1=\frac{z^2\pm 2xz+x^2-\nu y^2}{2xz}=\frac{(z\pm x)^2-\nu y^2}{2xz}$$
and
$$z_y\pm\nu^{1/2}=\frac{z^2\pm 2\nu^{1/2}yz+\nu y^2-x^2}{2yz}=\frac{(z\pm\nu^{1/2}y)^2-x^2}{2yz}.$$
\end{proof}

We now give an expression for orbital integrals for a regular element $\gamma$ which we will use for computation moving forward:
\begin{prop}\label{prop:elementary}
Suppose $x\neq 0$. Then $O_{\gamma_\mu}(\one_\mathcal{S}(\mathcal{O}_F))$ equals 
\begin{align*}
\frac{1}{\#G_{1\gamma}(F)}\Big(\sum_{m\geq m_0}\,q^{m}&\sum_{k}\int_{v(u)=k}\one_{\OO_F^2}\left(\frac{(\mu+z_yu^2+\frac{1}{4}\mu^{-1}\nu u^4)y}{\varpi^{2m}}, \frac{
(\mu-\frac{1}{4}\mu^{-1}\nu u^4)y}{\varpi^{m+k}}\right)
du~\notag\\
& +v(y)+\delta_{v(y)}(\mu)\left(1-v(\nu)\right)\Big)\mathbbm{1}_{\mathcal{O}_E}(x).
\end{align*}
where $m_0:=\left\lceil\frac{-v(y)+v(\mu)+v(\nu)}{2}\right\rceil$, and the second sum on $k$ runs over
$$m>k\geq\max(m_0,m-\frac{M_{12}+M_{23}-N_{13}}{2}).$$
\end{prop}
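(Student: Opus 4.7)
The plan is to start from Lemma \ref{lemma:before_elementary}, which has already reduced $O_{\gamma_\mu}(\one_{\mathcal{S}(\OO_F)})$ to a finite sum over $m=v(t)$ of an integral of the indicator $J_m(u)$ over shells $v(u)=k$ with $m>k\geq m_0$. The only remaining task is to transform the two $\OO_E$-integrality conditions defining $J_m(u)$ into the cleaner pair of conditions displayed in the proposition.

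First I would apply the decomposition $z=z_xx+z_yy$ from \eqref{z-decomp} (legitimate because $x,y$ are nonzero and $F$-linearly independent by Lemma \ref{lemma:independence}) to expand the two $E$-valued arguments of $J_m(u)$ as $F$-linear combinations of $x$ and $y$. Up to the factors $\varpi^{-m}$ and $\varpi^{-2m}$ one gets, respectively,
\[
u(1-z_x)x + \bigl(-uz_y-\tfrac12 u^3\mu^{-1}\nu\bigr)y \quad\text{and}\quad u^2(z_x-1)x + \bigl(\mu+u^2z_y+\tfrac14 u^4\mu^{-1}\nu\bigr)y.
\]
The central algebraic observation is that adding $u$ times the first expression to the second annihilates the $x$-component and leaves the scalar-multiple-of-$y$ quantity $(\mu-\tfrac14 u^4\mu^{-1}\nu)y$. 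This identity is what converts one of the original conditions into the second slot of the integrand in the proposition.

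Next I would establish equivalence of the two formulations under the range restriction $k\geq m-(M_{12}+M_{23}-N_{13})/2$. By Lemma \ref{lemma:unitary_invariant_valuation} one has $v(z_x-1)=M_{12}+M_{23}-N_{13}$, so this bound is precisely the assertion $u^2(z_x-1)\in \varpi^{2m}\OO_F$. Granted this, the $x$-component $u^2(z_x-1)x$ of the second expanded expression sits in $\varpi^{2m}\OO_E$ automatically (using $x\in\OO_E$, which is enforced by the $\one_{\OO_E}(x)$ factor), so the $\OO_E$-integrality of the whole second expression is equivalent to the $\OO_E$-integrality of its $y$-component---the first slot in the proposition. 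Substituting this equivalence back into the cancellation identity turns the $\OO_E$-integrality of the first original expression into the $\OO_E$-integrality of $(\mu-\tfrac14 u^4\mu^{-1}\nu)y/\varpi^{m+k}$, the second slot. Intersecting the new lower bound on $k$ with the bound $k\geq m_0$ inherited from Lemma \ref{lemma:before_elementary} produces the $\max$ expression, while the additive term $v(y)+\delta_{v(y)}(\mu)(1-v(\nu))$ and the factor $\one_{\OO_E}(x)$ carry over unchanged from that lemma.

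The main obstacle is to verify that the contribution from the excluded band $m_0\leq k<m-(M_{12}+M_{23}-N_{13})/2$ vanishes, so that truncating the sum at the stricter lower bound does not alter the answer. In this band $u^2(z_x-1)\notin\varpi^{2m}\OO_F$, and the reduction of the second $\OO_E$-integrality condition to its $y$-component is no longer automatic: a priori the $x$- and $y$-components could cancel modulo $\varpi^{2m}$ inside $\OO_E$. Ruling this out quantitatively is the delicate step; the idea is that for $u\in F$ the coefficient $u^2(z_x-1)$ is forced to lie in $F$, and any genuine cancellation against the scalar-of-$y$ term would amount to an $F$-linear dependence between $x$ and $y$ modulo a controlled power of $\varpi$ in $\OO_E$, contradicting Lemma \ref{lemma:independence} once one tracks the invariants $M_{ij},N_{ij}$ through the estimate. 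All other manipulations in the proof are bookkeeping of valuations on top of these two ingredients.
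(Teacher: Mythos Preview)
Your outline follows the same route as the paper: expand the two arguments of $J_m(u)$ along $z=z_xx+z_yy$, absorb the $x$-component condition $v(u^2(z_x-1)x)\geq 2m$ into the range on $k$ via Lemma~\ref{lemma:unitary_invariant_valuation}, and replace the remaining $y$-component condition on the first entry by the combination yielding $(\mu-\tfrac14\mu^{-1}\nu u^4)y$. The paper does exactly this, only it phrases the split into four scalar conditions up front (``by linear independence, $J_m(u)$ is the characteristic function of \dots'') rather than setting up an excluded band.

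Where your write-up goes wrong is the treatment of that excluded band. You present it as the ``delicate step'' and propose to rule out cancellation by invoking Lemma~\ref{lemma:independence} together with a tracking of the invariants $M_{ij},N_{ij}$. Mere $F$-linear independence of $x,y$ does \emph{not} preclude congruences $ax\equiv -by\pmod{\varpi^{2m}}$, so that argument as described would not close. What actually makes the split exact is the orthogonality relation $\Tr(x\bar y)=0$ noted just before Lemma~\ref{lemma:independence}: for $a,b\in F$, multiplying $ax+by$ by $\bar x$ gives $a\,\mathrm{N}(x)+b\,\bar xy$ with $a\,\mathrm{N}(x)\in F$ and $b\,\bar xy\in F\xi$; since $\{1,\xi\}$ is an $\OO_F$-basis of $\OO_E$, one reads off $v(a)\geq N-v(x)$ from $ax+by\in\varpi^N\OO_E$, and symmetrically $v(b)\geq N-v(y)$. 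Hence $ax+by\in\varpi^N\OO_E$ is \emph{equivalent} to $ax\in\varpi^N\OO_E$ and $by\in\varpi^N\OO_E$, so $J_m(u)$ already vanishes on your excluded band and there is nothing further to estimate. Replace your final paragraph by this one-line orthogonality argument and the proof matches the paper's.
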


\begin{proof}
By linear independence, the function $J_m(u)$ is equal to the characteristic function of the set of $u$ such that 
\begin{align}\label{eqn:F-residual}
\begin{split}
v\left(u^2(z_x-1)x\right) & \geq{2m},\\
v\left((\mu+z_yu^2+\frac{1}{4}\mu^{-1}\nu u^4)y\right) & \geq{2m},\\
v\left(u(z_x-1)x\right) & \geq{m},\\
v\left((z_yu+\frac{1}{2}\mu^{-1}\nu u^3)y\right) & \geq{m}.
\end{split}
\end{align}
When $m>k$ (which is true for $u$ in the support of the integral in Lemma \ref{lemma:before_elementary}), the first inequality implies the third. By Lemma \ref{lemma:unitary_invariant_valuation}, the first inequality is equivalent to 
\begin{align}
2k\geq2m-M_{12}-M_{23}+N_{13}.\label{eqn:X-coefficient}
\end{align}
Thus by Lemma \ref{lemma:before_elementary} we have that $\#G_{1\gamma}(F)\cdot O_{\gamma_{\mu}}(\one_{\mathcal{S}(\mathcal{O}_F)})$ equals
\begin{align*}
\Bigg(\sum_{m\geq m_0}q^{m}&\sum_{k}\int_{v(u)=k}\one_{\OO_F^2}\left(\frac{(\mu+z_yu^2+\frac{1}{4}\mu^{-1}\nu u^4)y}{\varpi^{2m}}, \frac{
(z_yu^2+\frac{1}{2}\mu^{-1}\nu u^4)y}{\varpi^{m+k}}\right)
du+~\notag\\
& v(y)+\delta_{v(y)}(\mu)\left(1-v(\nu)\right)\Bigg)\mathbbm{1}_{\mathcal{O}_E}(x)
\end{align*}
with the second sum on $k$ taken over $m>k\geq\max(m_0,m-\frac{M_{12}+M_{23}-N_{13}}{2}).$
Moreover, we may substitute the second inequality in (\ref{eqn:F-residual}) into the last inequality. The resulting characteristic function is the one that we stated.
\end{proof}

Before delving into case-by-case considerations in the following sections, let us remark that
\begin{lem} \label{lem:triv:case}
The orbital integral $O_{\gamma_{\nu}}(\one_{\mathcal{S}(\mathcal{O}_F)})$ vanishes unless
$M_{ij}$ and $N_{ij}$ are non-negative for any choice of $i,j$.
\end{lem}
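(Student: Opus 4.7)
The plan is to prove the contrapositive via an integrality argument on eigenvalues, rather than by direct inspection of the formula in Proposition \ref{prop:elementary}. The key observation is that $M_{ij}$ and $N_{ij}$ depend only on the eigenvalues $\lambda_1, \lambda_2, \lambda_3$ of $\gamma$, and these are preserved by the $G_1(F)$-conjugation appearing in the definition of the orbital integral.

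Concretely, suppose $O_{\gamma_\nu}(\mathbbm{1}_{\mathcal{S}(\mathcal{O}_F)}) \neq 0$. Then there exists some $g \in G_1(F)$ with $\gamma' := g^{-1} \gamma_\nu g \in \mathcal{S}(\mathcal{O}_F)$. Since $\mathcal{S}(\mathcal{O}_F) \subset G(\mathcal{O}_F) \subset \mathrm{GL}_3(\mathcal{O}_E)$, the characteristic polynomial of $\gamma'$ is a monic polynomial with coefficients in $\mathcal{O}_E$. Because conjugation preserves the characteristic polynomial, the eigenvalues $\lambda_1, \lambda_2, \lambda_3$ of $\gamma_\nu$ specified in (\ref{def_eigenvalues}) agree with those of $\gamma'$, and so they are roots of a monic $\mathcal{O}_E$-polynomial. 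In particular they are integral over $\mathcal{O}_F$.

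Extending $v$ uniquely to $\bar{F}$ in a manner compatible with the given valuation on $E$, we deduce $v(\lambda_i) \geq 0$ for each $i$. The ultrametric inequality (valid since the residue characteristic is odd, so in particular $2$ is a unit and does not contribute) then yields
\[
M_{ij} = v(\lambda_i - \lambda_j) \geq \min\bigl(v(\lambda_i), v(\lambda_j)\bigr) \geq 0,
\]
and similarly $N_{ij} = v(\lambda_i + \lambda_j) \geq 0$ for all pairs $i < j$. This is precisely the contrapositive of the claim.

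There is no serious obstacle: the argument is a routine integrality consideration, with the only mild subtlety being the choice of extension of $v$ to $\bar{F}$, which is standard since $F$ is complete (or, equivalently, one may simply work inside the finite extension generated by the $\lambda_i$). As a sanity check, one can also read off $N_{13} \geq 0$ directly from the factor $\mathbbm{1}_{\mathcal{O}_E}(x)$ appearing in Proposition \ref{prop:elementary} together with the identity $N_{13} = v(2x) = v(x)$, but the eigenvalue-based approach has the advantage of handling all six indices uniformly.
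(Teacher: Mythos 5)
Your argument is correct and is essentially the paper's own proof, just spelled out in more detail: the paper likewise observes that nonvanishing of the orbital integral forces the orbit to contain a point of $\mathcal{S}(\mathcal{O}_F)$, hence the eigenvalues are integral, from which $M_{ij},N_{ij}\geq 0$ follow (your ultrametric step, which the paper leaves implicit). The parenthetical about the residue characteristic being odd is unnecessary for the ultrametric inequality but harmless.
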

\begin{proof}
The eigenvalues must be integral for the stable orbit to intersect $\mathcal{S}(\OO_F)$. On the other hand, $O_{\gamma_\mu}(\mathbbm{1}_{\mathcal{S}(\mathcal{O}_F)})=0$ for all $\mu$ when the stable orbit does not contain an integral point.
\end{proof}

\numberwithin{equation}{subsection}
\section{Formulae on type I tori}\label{sec:main1}

In this section, we consider those $\gamma \in T_{\xi^2}(F)$ under the notation of Lemma \ref{lemma:classification_n=3}. That is, $\gamma$ lies in a type I torus.

By Lemma \ref{lemma:endoscopy} and Corollary \ref{coro:rational_orbits}, we know that
$$\mathfrak{D}(F,G_{1\gamma},G_1)\cong{\mathbb{Z}}/{2\mathbb{Z}}$$
and the parameter $\mu$ is chosen from $\{1,\varpi\}$.

The eigenvalues of $\gamma$, written in terms of its coordinates described in (\ref{eqn:canonical_representative}), are 
$$
\lambda_1=x+\xi y,\quad \lambda_2=z,\quad \lambda_3=x-\xi y.
$$
Furthermore, the condition that $\gamma\in G(F)$ implies that $\lambda_3=\overline{\lambda_1}^{-1}$ and $\lambda_2\overline{\lambda_2}=1$.

\begin{lem}
When $M_{ij}\geq 0$, we have
$$M_{13}\geq M_{12}=M_{23}\geq 0.$$
\end{lem}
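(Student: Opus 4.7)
The plan is to reduce everything to the eigenvalues and exploit the unitary constraint $\bar\gamma = \gamma^{-1}$ that comes from $\gamma \in \mathcal{S}(F)$. The first step is to record, for $\gamma$ of our form in $T_{\xi^2}$, the identities
\begin{equation*}
\lambda_3 = \bar\lambda_1^{-1}, \qquad \lambda_2\bar\lambda_2 = 1,
\end{equation*}
which follow directly by computing $\gamma\bar\gamma = \mathbbm{1}$ entrywise (using $\bar\xi = -\xi$ and $\bar\nu = \nu$).

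Next I would observe that $\lambda_1 - \lambda_3 = 2\xi y$ and $2,\xi \in \mathcal{O}_E^\times$, so $M_{13} = v(y)$. Now combine $\lambda_1\bar\lambda_3 = 1$ with the Galois-invariance of $v$ on $E$ to get $v(\lambda_1) = -v(\lambda_3)$. Together with the assumption $M_{13} \geq 0$ and the non-archimedean inequality $M_{13} \geq \min(v(\lambda_1),v(\lambda_3))$, this forces $v(\lambda_1) = v(\lambda_3) = 0$. Similarly $\lambda_2\bar\lambda_2 = 1$ yields $v(\lambda_2) = 0$. Hence all three eigenvalues are units.

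The equality $M_{12} = M_{23}$ is then a direct application of Galois invariance of the valuation. Indeed,
\begin{equation*}
\theta(\lambda_1 - \lambda_2) \;=\; \bar\lambda_1 - \bar\lambda_2 \;=\; \lambda_3^{-1} - \lambda_2^{-1} \;=\; -\frac{\lambda_2 - \lambda_3}{\lambda_2\lambda_3},
\end{equation*}
and since $\lambda_2,\lambda_3$ are units we get $v(\theta(\lambda_1 - \lambda_2)) = M_{23}$, whereas $v \circ \theta = v$ gives the left-hand side equal to $M_{12}$. Finally, the ultrametric inequality applied to $\lambda_1 - \lambda_3 = (\lambda_1 - \lambda_2) + (\lambda_2 - \lambda_3)$ yields
\begin{equation*}
M_{13} \;\geq\; \min(M_{12}, M_{23}) \;=\; M_{12},
\end{equation*}
completing the chain $M_{13} \geq M_{12} = M_{23} \geq 0$.

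No step here is really an obstacle; the only mildly delicate point is justifying that the valuations of $\lambda_1$ and $\lambda_3$ individually are zero rather than a priori merely opposite integers, and this is where the hypothesis $M_{13} \geq 0$ is genuinely needed. The rest is just Galois symmetry plus the strong triangle inequality.
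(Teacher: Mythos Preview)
Your argument is correct and follows essentially the same route as the paper: both use the unitary relations $\lambda_3=\bar\lambda_1^{-1}$, $\lambda_2=\bar\lambda_2^{-1}$ to see the eigenvalues are units, then apply Galois conjugation to $\lambda_1-\lambda_2$ to obtain $M_{12}=M_{23}$, and finish with the ultrametric inequality on $(\lambda_1-\lambda_2)+(\lambda_2-\lambda_3)$. One small wording point: to force $v(\lambda_1)=v(\lambda_3)=0$ you need the sharper fact that $v(\lambda_1-\lambda_3)=\min(v(\lambda_1),v(\lambda_3))$ when the two valuations differ, not merely the inequality you stated.
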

\begin{proof}
First, note that $M_{ij}\geq 0$ implies that $\lambda_1,\lambda_3\in\mathcal{O}_E^\times$. Furthermore, $\gamma\in\mathrm{U}_3(F)$ implies that $\lambda_3=\overline{\lambda_1}^{-1}$ and $\lambda_2=\overline{\lambda_2}^{-1}$.

So whenever
$$\lambda_1\equiv\lambda_2\pmod{\varpi^{l}},$$
one also has
$$\lambda_3=\overline{\lambda_1}^{-1}\equiv\overline{\lambda_2}^{-1}=\lambda_2\quad(\text{mod }\varpi^{l}).$$

Thus $M_{23}\geq M_{12}$, and $M_{12}\geq M_{23}$ by symmetry. Therefore $M_{12}=M_{23}$ and the conclusion follows from $\lambda_1-\lambda_3=(\lambda_1-\lambda_2)+(\lambda_2-\lambda_3)$.
\end{proof}

\subsection{Formulae for endoscopic orbital integrals}
By Lemma \ref{lem:triv:case}, the orbital integral vanishes whenever $M_{ij}$ and $N_{ij}$ are negative.
\begin{prop}\label{prop:main1}
Assume that all $M_{ij}$ and $N_{ij}$ are non-negative.
\begin{enumerate}
\item[$(1)$] When $M_{13}=0$ 
$$O_{\gamma_\mu}(\mathbbm{1}_{\mathcal{S}(\mathcal{O}_F)})=\frac{1}{2}\cdot \delta_0(\mu).$$
\item[$(2)$] When $M_{13}>M_{12}=0$,
$$O_{\gamma_\mu}(\mathbbm{1}_{\mathcal{S}(\mathcal{O}_F)})=\frac{1}{2}\left(M_{13}+\delta_{M_{13}}(\mu)\right).$$
\item[$(3)$] When $M_{13}>M_{12}>0$,
\begin{align*}
O_{\gamma_\mu}(\mathbbm{1}_{\mathcal{S}(\mathcal{O}_F)})=\frac{1}{2}\Big( & \left(M_{13}-M_{12}+\delta_{M_{12}}(1)\right)q^{\lfloor M_{12}/2\rfloor}\\ & +2\left(1+\delta_{M_{12}-M_{13}}(\mu)\right)\frac{q^{\lceil M_{12}/2\rceil}-1}{q-1}+\delta_{M_{13}}(\mu)-1\Big).
\end{align*}
\item[$(4)$] When $M_{12}=M_{13}=M_{23}>0$,
\begin{align*}
O_{\gamma_\mu}(\mathbbm{1}_{\mathcal{S}(\mathcal{O}_F)})=\frac{1}{2}\Big( & \left(\left(1+\left(\frac{z_y^2-\xi^2}{F}\right)\right)\left(1-v(\mu)\right)+2\right)\frac{q^{\lceil M_{12}/2\rceil}-1}{q-1}\\ & +\delta_{M_{12}}(1)q^{\lfloor M_{12}/2\rfloor}+\delta_{M_{12}}(\mu)-1\Big).
\end{align*}
\end{enumerate}
\end{prop}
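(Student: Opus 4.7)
The plan is to specialize Proposition \ref{prop:elementary} to $\nu = \xi^2$, for which $v(\nu) = 0$, $\nu^{1/2} = \xi$, and $v(y) = M_{13}$. First, one verifies $\#G_{1\gamma}(F) = 2$ in type I: among the four elements of $G_{1\gamma}(\overline{F})$ listed in Lemma \ref{lemma:stabilizer}, the ``swap'' generator involves $\xi^{\pm 1}\notin F$, so only $\pm\mathrm{diag}(1,1,1)$ and $\mathrm{diag}(-1,1,-1)$ descend to $F$-points. Second, integrality of $\gamma$ forces $N_{13}=0$ in type I: since $\lambda_1\lambda_3 = x^2-\xi^2 y^2 \in \mathcal{O}_F^\times$ with $v(y)\geq 0$, one deduces $v(x) = 0$ and hence $N_{13}=v(2x)=0$. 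Combined with $M_{12}=M_{23}$ from the lemma preceding the proposition, the lower bound on $k$ in Proposition \ref{prop:elementary} collapses to $\max(m_0, m-M_{12})$ where $m_0 = \lceil(v(\mu)-M_{13})/2\rceil$. Since $x\in\mathcal{O}_E$ is automatic, the orbital integral reduces to $\tfrac{1}{2}\bigl(I_\mu + M_{13} + \delta_{M_{13}}(\mu)\bigr)$, where $I_\mu$ denotes the double sum from that proposition.

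Cases (1) and (2) then fall out immediately: whenever $M_{12}=0$, the range $m-M_{12}\leq k<m$ is empty, so $I_\mu=0$, leaving only the boundary contribution, which gives $\tfrac{1}{2}\delta_0(\mu)$ for $M_{13}=0$ and $\tfrac{1}{2}(M_{13}+\delta_{M_{13}}(\mu))$ for $M_{13}>M_{12}=0$.

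For cases (3) and (4), where $M_{12}>0$, the key step is to factor the first integrand as a quadratic in $u^2$:
\[
\mu + z_y u^2 + \tfrac{\xi^2}{4\mu}u^4 \,=\, \tfrac{\xi^2}{4\mu}(u^2 - r_+)(u^2 - r_-), \qquad r_\pm \,=\, \tfrac{2\mu\bigl(-z_y\pm\sqrt{z_y^2-\xi^2}\bigr)}{\xi^2}.
\]
The first integrality condition becomes a quadratic congruence whose $F$-rational solvability is governed by $\left(\tfrac{z_y^2-\xi^2}{F}\right)$, while the second pins down $u^4\equiv 4\mu^2/\xi^2\pmod{\varpi^{m+k-M_{13}}}$. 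Combining these via Hensel's lemma, one counts $u$ modulo the appropriate power of $\varpi$ in each annulus $v(u)=k$; summing $J_{m,k}$ across $k\in[\max(m_0, m-M_{12}),m)$ yields a geometric series producing the factor $\tfrac{q^{\lceil M_{12}/2\rceil}-1}{q-1}$, while the boundary $k=m-M_{12}$ contributes the $q^{\lfloor M_{12}/2\rfloor}$ term with multiplicity determined by the range of admissible $m$ (namely $M_{13}-M_{12}$ in case (3)).

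The main obstacle will be the careful bookkeeping of parities and the dichotomy between (3) and (4). The congruence $u^2\equiv r_\pm\pmod{\varpi^N}$ admits $F$-solutions only when $N+v(r_\pm)$ is even (giving rise to the various $\delta_N(\mu)$ factors) and only when the leading coefficient is a square in $k_F$ (producing the quadratic symbol). Lemma \ref{lemma:unitary_invariant_valuation} computes $v(z_y^2-\xi^2) = 2M_{12}+N_{12}+N_{23}-2M_{13}$, and the separation into cases (3) and (4) reflects its sign: in case (3) with $M_{12}<M_{13}$, the contribution of the quadratic symbol gets absorbed into the pure parity factors $\delta_{M_{12}}(1)$ and $\delta_{M_{12}-M_{13}}(\mu)$, while in case (4) with $M_{12}=M_{13}$ the symbol $\left(\tfrac{z_y^2-\xi^2}{F}\right)$ genuinely survives in the formula through the factor $\bigl(1+\left(\tfrac{z_y^2-\xi^2}{F}\right)\bigr)(1-v(\mu))$. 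Verifying the precise count of solutions in each $(m,k)$-cell and reassembling them into the compact closed forms stated in the proposition is the main calculational burden.
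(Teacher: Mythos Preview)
Your outline follows the paper's approach: specialize Proposition~\ref{prop:elementary}, factor the quartic in $u$ as a quadratic in $u^2$ with roots $r_\pm$, and count solutions to the resulting congruences. One small correction: the claim $N_{13}=0$ only holds when $M_{13}>0$ (if $M_{13}=0$ then $v(y)=0$ and $v(x)$ may be positive), though your empty-range argument for case~(1) survives since the lower bound on $k$ only increases when $N_{13}>0$.

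The one organizational device you have not made explicit, and which the paper relies on to tame cases~(3) and~(4), is a partition of the $(m,k)$-region into three subregions according to the signs of $m+k-M_{13}-v(\mu)$ and of $2m-(2M_{12}-M_{13}+v(\mu))$: a region~$(A)$ where the second congruence forces $u^4\equiv 4\mu^2\xi^{-2}$ to genuinely positive depth (no $F$-solutions since $\xi^2\notin (F^\times)^2$, hence no contribution); a region~$(B)$ where Lemma~\ref{lemma:square_elementary} applies to the factored first congruence and produces the geometric-series terms in $q$; and a region~$(C)$ where both constraints degenerate to pure valuation bounds on $u$, yielding the $\delta_{M_{12}}(1)q^{\lfloor M_{12}/2\rfloor}$ term. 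Your attribution of the $q^{\lfloor M_{12}/2\rfloor}$ contribution to ``the boundary $k=m-M_{12}$'' is not quite right: it arises partly from a sub-piece of~$(B)$ (giving multiplicity $M_{13}-M_{12}$) and partly from~$(C)$ (giving the extra $\delta_{M_{12}}(1)$). This trichotomy is what converts the ``careful bookkeeping'' you anticipate into a finite list of geometric sums that actually reassemble into the stated closed forms.
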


Let $\kappa_1\in\mathfrak{D}(F,G_{1\gamma},G_1)^\mathrm{D}\cong{\mathbb{Z}}/{2\mathbb{Z}}$ be the nontrivial character (see \S\ref{sec:intro}).

\begin{cor}\label{cor:kappa1}
Assume that $M_{ij}$ and $N_{ij}$ are non-negative.

For $\gamma\in T_{\xi^2}(F)$, the endoscopic orbital integral
$SO^\kappa_{\gamma}(\mathbbm{1}_{\mathcal{S}(\mathcal{O}_F)})$ for $\kappa=\kappa_1$ is given by
$$\frac{1}{2}(-1)^{M_{12}-M_{13}}\left(1+\left(\frac{z_y^2-\xi^2}{F}\right)\right)\frac{q^{\lceil M_{12}/2\rceil}-1}{q-1}+\frac{1}{2}(-1)^{M_{13}}.$$
\end{cor}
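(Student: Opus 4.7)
The starting point is the identity
$$SO^{\kappa_1}_\gamma(\mathbbm{1}_{\mathcal{S}(\mathcal{O}_F)})=O_{\gamma_1}(\mathbbm{1}_{\mathcal{S}(\mathcal{O}_F)})-O_{\gamma_\varpi}(\mathbbm{1}_{\mathcal{S}(\mathcal{O}_F)})$$
coming from $\kappa_1(1)=1$, $\kappa_1(\varpi)=-1$, applied to the two rational orbit representatives from Corollary \ref{coro:rational_orbits}. The plan is to substitute the formulae of Proposition \ref{prop:main1} case by case and track the $\mu$-dependence via the identities $\delta_N(1)-\delta_N(\varpi)=(-1)^N$, $\delta_N(1)+\delta_N(\varpi)=1$, $v(1)=0$, and $v(\varpi)=1$.

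In cases (1) and (2) of Proposition \ref{prop:main1}, the assumption $M_{12}=0$ annihilates the factor $\frac{q^{\lceil M_{12}/2\rceil}-1}{q-1}$, so the first summand of the target expression vanishes; the $\delta_{M_{13}}$-difference then collapses to $\tfrac12(-1)^{M_{13}}$, as required. In case (4), where $M_{12}=M_{13}$ and so $(-1)^{M_{12}-M_{13}}=1$, the $\mu$-dependent coefficient $(1+\left(\frac{z_y^2-\xi^2}{F}\right))(1-v(\mu))$ produces exactly the bracketed factor after subtraction, while the $\delta_{M_{12}}(\mu)-1$ term contributes the $\tfrac12(-1)^{M_{13}}$ tail.

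The hard case will be (3), where $M_{13}>M_{12}>0$. Direct subtraction via Proposition \ref{prop:main1}(3) will yield
$$(-1)^{M_{12}-M_{13}}\frac{q^{\lceil M_{12}/2\rceil}-1}{q-1}+\frac{1}{2}(-1)^{M_{13}},$$
so agreement with the claim demands the identity $\left(\frac{z_y^2-\xi^2}{F}\right)=1$ throughout case (3). To establish this, I will first note that $N_{12}=0$: since $\lambda_1=\tfrac12((\lambda_1-\lambda_2)+(\lambda_1+\lambda_2))$ is a unit we have $\min(M_{12},N_{12})=0$, forcing $N_{12}=0$ as $M_{12}>0$. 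Next, using the factorizations extracted from Lemma \ref{lemma:unitary_invariant_valuation},
$$z_y+\xi=\frac{(\lambda_2-\lambda_3)(\lambda_2+\lambda_1)}{2yz},\qquad z_y-\xi=\frac{(\lambda_2-\lambda_1)(\lambda_2+\lambda_3)}{2yz},$$
the congruence $\lambda_1\equiv\lambda_3\pmod{\varpi^{M_{13}}}$ combined with the strict inequalities $M_{12},\,N_{12}<M_{13}$ will imply that $\lambda_2-\lambda_3$ and $\lambda_2-\lambda_1$ share their leading coefficient in $k_E$, and so do $\lambda_2+\lambda_3$ and $\lambda_2+\lambda_1$. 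Hence $z_y+\xi$ and its Galois conjugate $z_y-\xi$ have the same leading coefficient in $k_E^\times$, which by Galois equivariance of reduction forces it to lie in $k_F^\times$.

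The final observation is $k_F^\times\subset(k_E^\times)^2$: since $k_E^\times$ is cyclic of order $(q-1)(q+1)$ with $q$ odd, the squares form the unique subgroup of index two (of order $(q^2-1)/2$), and $q-1$ divides $(q^2-1)/2$ because $q+1$ is even, so the unique subgroup $k_F^\times\subset k_E^\times$ of order $q-1$ is contained in $(k_E^\times)^2$. Hensel's lemma (using residue characteristic $\neq 2$) then writes $z_y+\xi=\varpi^{v}w^2$ for some $w\in\mathcal{O}_E^\times$ and $v=M_{12}-M_{13}\in\mathbb{Z}$, whence
$$z_y^2-\xi^2=N_{E/F}(z_y+\xi)=\varpi^{2v}N(w)^2=(\varpi^vN(w))^2\in(F^\times)^2,$$
closing case (3) and with it the corollary.
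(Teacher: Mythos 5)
Your proposal is correct, and its skeleton is the same as the paper's: substitute Proposition \ref{prop:main1} into the character sum over $\mu\in\{1,\varpi\}$ and use $\delta_N(1)-\delta_N(\varpi)=(-1)^N$; your bookkeeping in cases (1), (2), (4) reproduces the paper's table exactly. The one point where you take a different route is the identity $\left(\frac{z_y^2-\xi^2}{F}\right)=1$ in case (3), which is indeed what is needed to fold the bare $(-1)^{M_{12}-M_{13}}\frac{q^{\lceil M_{12}/2\rceil}-1}{q-1}$ into the uniform formula of the corollary. You prove it by showing (via the factorizations of $z_y\pm\xi$ and $N_{12}=N_{23}=0$) that the leading coefficient of $z_y+\xi$ is Frobenius-fixed, then invoking $k_F^\times\subset(k_E^\times)^2$, Hensel's lemma, and the norm $z_y^2-\xi^2=\mathrm{N}_{E/F}(z_y+\xi)$. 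This is valid, but it re-derives the first assertion of Lemma \ref{lemma:square_root_I_negative}, which the paper gets in one line: in case (3) one has $v(z_y)=M_{12}-M_{13}<0$, so $z_y^2-\xi^2$ has the same (even) valuation and the same (square) leading coefficient as $z_y^2$, hence is a square in $F^\times$ by Hensel. So the only substantive difference is that your verification of this square identity is more roundabout than necessary --- you could simply cite Lemma \ref{lemma:square_root_I_negative}, or note the valuation comparison above --- while everything else coincides with the paper's argument.
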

\begin{proof}
The character sums of rational orbital integrals are computed as in the following table:
\vspace{3pt}
\begin{center}
\begin{tabular}{|c|c|}
\hline
& $\kappa=\kappa_{1}$ \\
\hline
$M_{13}=0$ & $\frac{1}{2}$ \\
$M_{13}>M_{12}=0$ & $\frac{1}{2}(-1)^{M_{13}}$\\
$M_{13}>M_{12}\geq0$ & $(-1)^{M_{12}-M_{13}}\frac{q^{\lceil M_{12}/2\rceil}-1}{q-1}+\frac{1}{2}(-1)^{M_{13}}$ \\
$M_{12}=M_{13}=M_{23}>0$ & $\frac{1}{2}\left(1+\left(\frac{z_y^2-\xi^2}{F}\right)\right)\frac{q^{\lceil M_{12}/2\rceil}-1}{q-1}+\frac{1}{2}(-1)^{M_{12}}$ \\
\hline
\end{tabular}
\end{center}
\vspace{3pt}

Those formulae comes directly from Proposition \ref{prop:main1} and the observation that
$$\sum_{\mu}\kappa_1(\mu)\delta_N(\mu)=(-1)^{N}.$$
\end{proof}
We will devote the rest of the section in proving Proposition \ref{prop:main1}.\vspace{2mm}

By Proposition \ref{prop:elementary}, the orbital integral $O_{\gamma_\mu}(\one_{\mathcal{S}(\mathcal{O}_F)})$ equals
\begin{align*}
\frac{1}{2}\Big(\sum_{m\geq m_0}\,q^{m}&\sum_{k}\int\limits_{\varpi^{k}\mathcal{O}_F-\varpi^{k+1}\mathcal{O}_F}\one_{\OO_F^2}\left(\frac{\mu+z_yu^2+\frac{1}{4}\mu^{-1}\nu u^4}{\varpi^{2m-M_{13}}}, \frac{
\mu-\frac{1}{4}\mu^{-1}\nu u^4}{\varpi^{m+k-M_{13}}}\right)
du+~\notag\\
& M_{13}+\delta_{M_{13}}(\mu)\Big)\mathbbm{1}_{\mathcal{O}_E}(x)
\end{align*}
with $m_0=\left\lceil\frac{-M_{13}+v(\mu)}{2}\right\rceil$.

We will separate the computation for the orbital integral into several cases. The cases are according to the orderings of $M_{ij}$. In each case, computation will also be separated into cases that respect the choice of domain in the indices $m$ and $k$ for the sums in Proposition \ref{prop:elementary}.

It suffices to consider the sum subject to $m>k$ (since the rest of the sum has been computed in Proposition \ref{prop:elementary}). When this is the case, by (\ref{eqn:X-coefficient}) we know
$$2k\geq 2m-M_{12}-M_{23}+N_{13}.$$
Also, by Lemma \ref{lem:triv:case} we may assume $M_{ij}\geq 0$. Note that this condition is equivalent to $\lambda_1\in\mathcal{O}_E^\times$ for $\gamma$ in a type I torus.

By Proposition \ref{prop:elementary}, we may focus on the following equations:
\begin{align}
v\left(\mu+z_yu^2+\frac{1}{4}\mu^{-1}\xi^2u^4\right) & \geq{2m-M_{13}},\label{eqn:Y_I_1}\\
v\left(\mu-\frac{1}{4}\mu^{-1}\xi^2u^4\right) & \geq{m+k-M_{13}}.\label{eqn:Y_I_2}
\end{align}

\subsection{Proof of case (1)}
Suppose $M_{13}=0$.

In this case, $Y\in\mathcal{O}_E^\times$ by assumption. From (\ref{eqn:trivial_entries}) we know $m+k>v(\mu)$. Moreover, from (\ref{eqn:Y_I_2}) we know
$$v\left(u^4-4\mu^2\xi^{-2}\right)\geq{m+k+v(\mu)},$$
which is equivalent to
$$u^4 \equiv 4\mu^2\xi^{-2}\pmod{\varpi^{m+k+v(\mu)}}.$$
The equation above does not have a solution since $\xi^2$ is not a square in $F^\times$ (so nor is its leading coefficient a square in $k_F^\times$).

Thus, in this case we have
$$O_{\gamma_{\mu}}(\mathbbm{1}_{\mathcal{S}(\mathcal{O}_F)})=\frac{1}{2}\cdot \delta_0(\mu).$$
\qed

For the rest of the proof we assume $M_{13}>0$ and thereby $N_{13}=0$.

Then (\ref{eqn:Y_I_1}) can be rearranged as
\begin{align}
v\left((u^2+2\mu\xi^{-2} z_y)^2-4\mu^2\xi^{-4}(z_y^2-\xi^2)\right)\geq{2m-M_{13}+v(\mu)}.\label{eqn:Y_I_1_result}
\end{align}
We can also write (\ref{eqn:Y_I_2}) as
\begin{align}
v\left(u^4-4\mu^2\xi^{-2}\right)\geq{m+k}-M_{13}+v(\mu).\label{eqn:Y_I_2_result}
\end{align}

By Lemma \ref{lemma:unitary_invariant_valuation}, we have
$$v(z_y^2-\xi^2)=N_{12}+M_{12}+N_{23}+M_{23}-2M_{13}.$$

Since $(\lambda_i+\lambda_2)-(\lambda_i-\lambda_2)=2z\in\mathcal{O}_E^\times$, at least one of $M_{i2}$ or $N_{i2}$ vanishes for $i=1$ and $3$. Moreover, $(\lambda_1-\lambda_2)+(\lambda_2-\lambda_3)=(\lambda_1+\lambda_2)-(\lambda_2+\lambda_3)=2\xi y$ has valuation $M_{13}>0$. In terms of $M_{ij}$, the possible scenarios are:
\begin{itemize}
\item Either $M_{12}=M_{23}=0$, or
\item both $M_{12}$ and $M_{23}$ are positive. Furthermore, we have $N_{12}=N_{23}=0$.
\end{itemize}

\subsection{Case $(2)$}
Suppose $M_{13}>M_{12}=0$.

If $M_{12}=M_{23}=0$, there does not exist $k$ such that $2m>2k\geq 2m-M_{12}-M_{23}$. Therefore we have
$$O_{\gamma_\mu}(\mathbbm{1}_{\mathcal{S}(\mathcal{O}_F)})=\frac{1}{2}\left(M_{13}+\delta_{M_{13}}(\mu)\right).$$
\qed

\subsection{Proof of case (3)}
Suppose $M_{13}>M_{12}>0$.

If $M_{12}>0$, we have $N_{12}=N_{23}=0$. By Lemma \ref{lemma:unitary_invariant_valuation}, we know that
$$v(z_y^2-\xi^2)=2M_{12}-2M_{13}<0.$$
It follows that $v(z_y)=M_{12}-M_{13}<0$.

\begin{lem}\label{lemma:square_root_I_negative}
If $M_{13}>M_{12}>0$, we know that $z_y^2-\xi^2$ is a square in $F^\times$. Moreover, one of $z_y\pm\sqrt{z_y^2-\xi^2}$ has valuation $M_{12}-M_{13}$. It follows that the other has valuation $M_{13}-M_{12}$.
\end{lem}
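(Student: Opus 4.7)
The plan is to derive the valuation statement from Lemma \ref{lemma:unitary_invariant_valuation}, apply Hensel's lemma to establish squareness, and then use a product identity to split the two valuations.

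First I would compute $v(z_y^2-\xi^2)$ directly. Since $M_{13}>M_{12}>0$, we showed $N_{12}=N_{23}=0$, so by Lemma \ref{lemma:unitary_invariant_valuation} the identity
\[
v(z_y-\xi)+v(z_y+\xi)=(M_{12}+N_{23})+(M_{23}+N_{12})-2M_{13}=2M_{12}-2M_{13}<0
\]
gives $v(z_y^2-\xi^2)=2(M_{12}-M_{13})$. On the other hand $v(\xi^2)=0$, and since the two valuations differ we must have $v(z_y^2)=2(M_{12}-M_{13})$ as well, i.e.\ $v(z_y)=M_{12}-M_{13}<0$. In particular the leading coefficient of $z_y^2-\xi^2$ coincides with that of $z_y^2$, which is a square in $k_F^\times$. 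As the valuation is even and $p\neq 2$, Hensel's lemma applies and shows that $z_y^2-\xi^2$ is a square in $F^\times$.

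Next I would extract valuation information about $z_y\pm\sqrt{z_y^2-\xi^2}$ from the factorization
\[
\bigl(z_y+\sqrt{z_y^2-\xi^2}\bigr)\bigl(z_y-\sqrt{z_y^2-\xi^2}\bigr)=\xi^2,
\]
whose valuation is $0$. Each factor has valuation $\geq\min(v(z_y),v(\sqrt{z_y^2-\xi^2}))=M_{12}-M_{13}$. If both factors had valuation exactly $M_{12}-M_{13}<0$, their product would have valuation $2(M_{12}-M_{13})<0$, a contradiction. Hence cancellation occurs in exactly one of the two sums: after choosing the sign of $\sqrt{z_y^2-\xi^2}$ so that its leading coefficient agrees with that of $z_y$, the difference has strictly larger valuation while the sum retains valuation $M_{12}-M_{13}$. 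The product identity then forces the other factor to have valuation $-(M_{12}-M_{13})=M_{13}-M_{12}$, which is what was asserted.

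The only subtle point is the Hensel-type step showing $z_y^2-\xi^2$ is a genuine square in $F^\times$; everything else is bookkeeping with the valuations furnished by Lemma \ref{lemma:unitary_invariant_valuation} together with the constraints $N_{12}=N_{23}=0$ and $v(\xi)=0$. I expect this to be the cleanest and shortest argument.
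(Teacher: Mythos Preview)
Your proposal is correct and follows essentially the same route as the paper: compute $v(z_y^2-\xi^2)=2(M_{12}-M_{13})$ from Lemma~\ref{lemma:unitary_invariant_valuation} and $N_{12}=N_{23}=0$, observe that the leading coefficient and valuation match those of $z_y^2$ so that $z_y^2-\xi^2\in(F^\times)^2$, and then use the product identity $(z_y+\sqrt{z_y^2-\xi^2})(z_y-\sqrt{z_y^2-\xi^2})=\xi^2$ to split the two valuations. The only cosmetic difference is that the paper phrases the middle step as ``since $\mathrm{char}\,F\neq 2$, at least one of $z_y\pm\sqrt{z_y^2-\xi^2}$ has valuation $v(z_y)$'' (via their sum $2z_y$), whereas you argue via leading coefficients; your intermediate sentence ruling out both factors having valuation $M_{12}-M_{13}$ is a slight detour but harmless.
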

\begin{proof}
For the first assertion, it suffices to note that $z_y^2-\xi^2\in F$, and its leading coefficient and valuation are the same as those of $z_y^2$ by omission of terms of higher valuation.

Given the existence of $\sqrt{z_y^2-\xi^2}$, we have $v(\sqrt{z_y^2-\xi^2})=v(z_y)$. Since $\mathrm{char}\,F\neq 2$, at least one of $v(z_y\pm\sqrt{z_y^2-\xi^2})$ must has the same valuation as $z_y$, which is $M_{12}-M_{13}$.

The key observation that leads to the last part of the assertion is
$$(z_y+\sqrt{z_y^2-\xi^2})(z_y-\sqrt{z_y^2-\xi^2})=\xi^2$$
and so $v(z_y+\sqrt{z_y^2-\xi^2})+v(z_y-\sqrt{z_y^2-\xi^2})=0$.
\end{proof}

We now separate the domain of the sums in indices $m$ and $k$. The part of the sum when $2m>2k\geq 2m-M_{12}-M_{23}$ for orbital integral (as written in the form of Proposition \ref{prop:elementary}) has not been computed. We will separate it into three divisions (as shown in the figure below):
\begin{enumerate}
\item[$(A)$] $m>k$ and $m+k>M_{13}+v(\mu)$.
\item[$(B)$] $m>k$, $m+k\leq M_{13}+v(\mu)$, and $2m>2M_{12}-M_{13}+v(\mu)$.
\item[$(C)$] $m>k$ and $2m\leq 2M_{12}-M_{13}+v(\mu)$.
\end{enumerate}
\begin{center}
\begin{figure}[ht]
\begin{tikzpicture}
\draw [thin, blue!60] (-1,-1) -- (2.6,-1);
\draw [thin, blue!60] (-1,-1) -- (2.6,2.6);
\draw [thin, blue!60] (-1,-1) -- (-1,2.6);
\draw [thin, blue!60] (1,1) -- (1.75,0.25);
\draw [thin, blue!60] (0.5,0.5) -- (0.5,-1);
\draw [thin, blue!60] (0.5,-1) -- (2.6,1.1);
\foreach \x in {-3,-2,...,5}
\foreach \y in {-3,-2,...,5}
\node[draw,circle,inner sep=0.3pt,fill,black!30] at (1/2*\x,1/2*\y) {};
\draw [thin, black,-latex] (-2,0) -- (3,0);
\draw [thin, black,-latex] (0,-2) -- (0,3);
\node at (3.2,0.2) {$m$};
\node at (-0.2,3.2) {$k$};
\node at (2,1.5) {$(A)$};
\node at (1,0.5) {$(B)$};
\node at (-0.32,-0.75) {$(C)$};
\end{tikzpicture}
\caption*{The three divisions when $\mu=1$, $M_{13}=4$, and $M_{12}=3$.}
\end{figure}
\end{center}
We now state a lemma for the purpose of solving quadratic congruence. Recall that the valuation $v$ on $F$ extends to the maximal unramified extension $F^{\mathrm{ur}}$ of $F$. We will continue to denote this extension also by $v$.
\begin{lem}\label{lemma:square_elementary}
Let $a\in F^{\mathrm{ur}}$ and $2v(a)<l$ for some $l\in\mathbb{Z}$. Then
$$v(x^2-a^2)\geq l$$
if and only if 
$$v(x+a)\geq l-v(a) \quad\textrm{ or }\quad v(x-a) \geq l-v(a).
$$
\end{lem}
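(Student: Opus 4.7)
The plan is to prove both implications using only two ingredients: the factorization $x^{2}-a^{2}=(x+a)(x-a)$, the identity $(x+a)-(x-a)=2a$, and the fact that $v(2)=0$ because the residue characteristic is not $2$.

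For the reverse direction, suppose for instance that $v(x+a)\geq l-v(a)$. Writing $x-a=(x+a)-2a$ and applying the ultrametric inequality, I would conclude $v(x-a)\geq \min\bigl(v(x+a),v(a)\bigr)$. Since $2v(a)<l$ gives $v(a)<l-v(a)$, this minimum is exactly $v(a)$. Multiplying valuations then yields
\[
v(x^{2}-a^{2})=v(x+a)+v(x-a)\geq (l-v(a))+v(a)=l.
\]
The other case is symmetric.

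For the forward direction, suppose $v(x^{2}-a^{2})\geq l$, so that $v(x+a)+v(x-a)\geq l$. The key observation is that at least one of $v(x+a)$, $v(x-a)$ must be $\leq v(a)$: otherwise both exceed $v(a)$, but their difference is $\pm 2a$ of valuation $v(a)$, giving a contradiction via the ultrametric inequality. Without loss of generality, say $v(x-a)\leq v(a)$. I would then split into two subcases. If $v(x-a)<v(x+a)$, the ultrametric strict-inequality principle applied to $2a=(x+a)-(x-a)$ forces $v(x-a)=v(a)$, whence $v(x+a)\geq l-v(a)$ as required. If instead $v(x-a)=v(x+a)$, then $v(x^{2}-a^{2})=2v(x-a)\geq l$, giving $v(x-a)\geq l/2>v(a)$ (using $2v(a)<l$), which contradicts $v(x-a)\leq v(a)$; this subcase is vacuous.

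This is essentially a routine valuation-theoretic Hensel-type statement, so I do not anticipate a serious obstacle — the only delicate point is keeping track of the strict inequality $2v(a)<l$, which is precisely what rules out the pathological subcase where both factors of $x^{2}-a^{2}$ share the same valuation. The hypothesis $p\neq 2$ enters silently through $v(2)=0$ and is essential.
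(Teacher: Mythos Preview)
Your proof is correct and follows essentially the same valuation-theoretic argument as the paper (the paper normalizes by writing $x=x_0\varpi^{v(a)}$, $a=a_0\varpi^{v(a)}$ and then observes one of $x_0\pm a_0$ is a unit, which is exactly your ``at least one of $v(x\pm a)$ is $\leq v(a)$'' observation). One small expository gap: in the forward direction you split into the subcases $v(x-a)<v(x+a)$ and $v(x-a)=v(x+a)$ but do not address $v(x-a)>v(x+a)$; this case is also vacuous, since then the ultrametric forces $v(2a)=v(x+a)<v(x-a)\leq v(a)$, a contradiction.
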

\begin{proof}
For the \textsl{if} part of the assertion, observe that if $v(x\pm a)\geq l-v(a)$, then $v(x\mp a)=v(a)$.

For the \textsl{only if} port of the assertion, we assume $v(x^2-a^2) \geq l$. Then $v(x)=v(a)$. We can therefore write $x=x_0\varpi^{v(a)}$ and $a=a_0\varpi^{v(a)}$.  This implies that
$$v\left((x_0+a_0)(x_0-a_0)\right)\geq{l-2v(a)}.$$
At least one of $x_0+a_0$ and $x_0-a_0$ must be a unit since $\mathrm{char}\,F\neq 2$. Thus at least one of $v(x_0+a_0)$ or $v(x_0-a_0)$ is greater than $l-2v(a)$. Multiply by $\varpi^{v(a)}$ to yield the desired inequality.
\end{proof}

Note that if $2v(a)\geq l$, then
$v(x^2-a^2) \geq l$ if and only if $2v(x) \geq l$.

\subsubsection{Contribution from $(A)$}

In this case (\ref{eqn:Y_I_2_result}) implies that
$$u^4\equiv 4\mu^2\xi^{-2}\pmod{\varpi^{m+k-M_{13}+v(\mu)}}$$
which does not have a solution $u\in F$ by Lemma \ref{lemma:square_elementary} because $\xi^2$ is not a square in $F^\times$. Therefore this part does not contribute in the orbital integral.

\subsubsection{Contribution from $(B)$}

In this case $m+k\leq M_{13}+v(\mu)$. So (\ref{eqn:Y_I_2_result}) becomes
$$4k\geq m+k-M_{13}+v(\mu).$$

Note that $z_y^2-\xi^2\in (F^\times)^2$ by Lemma \ref{lemma:square_root_I_negative}. Thus, (\ref{eqn:Y_I_1_result}) implies
$$v\left(u^2+2\mu\xi^{-2}(z_y\pm\sqrt{z_y^2-\xi^2})\right)\geq{2m-M_{12}}.$$
Note the $\pm$ sign here means that \textsl{at least one of} the congruences holds.

For notational convenience, we will fix the choice of square roots of $z_y^2-\xi^2$ so that
$$v(z_y+\sqrt{z_y^2-\xi^2})=M_{12}-M_{13}$$
(see explanation in Lemma \ref{lemma:square_root_I_negative}).

Then we have
$$v(2\mu\xi^{-2}(z_y+\sqrt{z_y^2-\xi^2}))=M_{12}-M_{13}+v(\mu)<2m-M_{12}$$
by the assumption on the domain $(B)$. Observe that $-2\mu\xi^2(z_y+\sqrt{z_y^2-\xi^2})$ has the same leading coefficient as that of $-4\mu\xi^2 z_y$. Thus it can only be a square if there exist some $\mu$ so that $\left(\frac{-\mu\xi^2 z_y}{F}\right)=1$.

Suppose such choice exists. Then we fix $\mu$ so that $\left(\frac{-\mu\xi^2 z_y}{F}\right)=1$. We have
$$k=\frac{1}{2}\left(M_{12}-M_{13}+v(\mu)\right)$$
and $u$ is determined pairwise, up to translation by $\varpi^{2m-k-M_{12}}\mathcal{O}_F$ (see Lemma \ref{lemma:square_elementary}).

The choice of $m$ would have to satisfy the following inequalities:
\begin{align*}
k+M_{12} \geq & m,\\
M_{13}-k+v(\mu) \geq & m,\\
3k+M_{13}-v(\mu) \geq & m,\\
& m>k,\\
& 2m > -M_{13}+v(\mu),\\
& 2m > 2M_{12}-M_{13}+v(\mu).
\end{align*}

In particular, the first inequality implies the other upper-bounds while the last inequality implies the other lower-bounds.

Thus the sum will be taken over $m$ satisfying
$$2k+2M_{12}\geq 2m > 2k+M_{12}.$$

Note that $u$ is determined pairwise, up to translation by $\varpi^{2m-k-M_{12}}\mathcal{O}_F$, which has measure
$$\mathrm{vol}\,(\varpi^{2m-k-M_{12}}\mathcal{O}_F)=q^{k-2m+M_{12}}.$$
Hence the contribution from this part is
\begin{align}
\left(1+\left(\frac{-\mu\xi^2 z_y}{F}\right)\right)\sum_{m}q^{k-m+M_{12}} & =\left(1+\left(\frac{-\mu\xi^2 z_y}{F}\right)\right)\sum_{l=0}^{\lceil\frac{M_{12}}{2}\rceil-1}q^l\notag\\
& =\left(1+\left(\frac{-\mu\xi^2 z_y}{F}\right)\right)\frac{q^{\lceil\frac{M_{12}}{2}\rceil}-1}{q-1}.\label{eqn:integral_I.I.B1}
\end{align}

On the other hand, we have another part of the solution coming from
\begin{align}
v\left(u^2+2\mu\xi^2(z_y-\sqrt{z_y^2-\xi^2})\right)\geq{2m-M_{12}}.\label{eqn:to_solve_I.I.B2}
\end{align}
Note that $v\left(2\mu\xi^2(z_y-\sqrt{z_y^2-\xi^2})\right)=M_{13}-M_{12}+v(\mu)$. Solving this would therefore depend on the ordering of
$$2m-M_{12}\quad\text{ and }\quad M_{13}-M_{12}+v(\mu).$$

Suppose $2m>M_{13}+v(\mu)$. Since the leading coefficient of $z_y-\sqrt{z_y^2-\xi^2}$ is the same as that of $\frac{\xi^2}{2z_y}$ (recall the proof of Lemma \ref{lemma:square_root_I_negative}), there exists a solution for $u$ only when $\left(\frac{-\mu z_y}{F}\right)=1$. If $\mu$ is chosen so that is the case, we know $k=\frac{1}{2}\left(M_{13}-M_{12}+v(\mu)\right)$ and $u$ is determined pairwise, up to translation by $\varpi^{2m-M_{12}-k}\mathcal{O}_F$.

The bounds for $m$ are again
$$2k+2M_{12}\geq 2m>2k+M_{12}$$
and this part contributes to the orbital integral
\begin{align}
\left(1+\left(\frac{-\mu z_y}{F}\right)\right)\frac{q^{\lceil\frac{M_{12}}{2}\rceil}-1}{q-1}.\label{eqn:integral_I.I.B2}
\end{align}

Suppose $2m\leq M_{13}+v(\mu)$. Then the equation to solve for (\ref{eqn:to_solve_I.I.B2}) reduces to
$$2k\geq{2m-M_{12}}.$$

In this case we have
$$M_{13}+v(\mu)\geq 2m>2M_{12}-M_{13}+v(\mu).$$
Moreover, for any fixed $m$, the range for $k$ is given by
$$2m>2k\geq 2m-M_{12}.$$

As a result, this part of the orbital integral contributes to the orbital integral
\begin{align}
\sum_{m}\sum_{k=m-\lfloor\frac{M_{12}}{2}\rfloor}^{m-1}q^{m-k}(1-q^{-1}) & =\sum_{m}(q^{\lfloor\frac{M_{12}}{2}\rfloor}-1)\notag\\
& =(M_{13}-M_{12})(q^{\lfloor\frac{M_{12}}{2}\rfloor}-1).\label{eqn:integral_I.I.B3}
\end{align}

Thus, the collective contribution from $(B)$ is the sum of (\ref{eqn:integral_I.I.B1}), (\ref{eqn:integral_I.I.B2}), and (\ref{eqn:integral_I.I.B3}):
\begin{align}
2\delta_{M_{12}-M_{13}}\left(\mu\right)\frac{q^{\lceil\frac{M_{12}}{2}\rceil}-1}{q-1}+(M_{13}-M_{12})(q^{\lfloor\frac{M_{12}}{2}\rfloor}-1).\label{eqn:integral_I.I.B}
\end{align}

Note that we are applying the identity
$$\left(1+\left(\frac{-\mu z_y}{F}\right)\right)+\left(1+\left(\frac{-\mu\xi^2 z_y}{F}\right)\right)=2\delta_{M_{12}-M_{13}}(\mu)$$
for combining (\ref{eqn:integral_I.I.B1}) and (\ref{eqn:integral_I.I.B2}).

\subsubsection{Contribution from $(C)$}

From (\ref{eqn:Y_I_2_result}) we derive
$$4k\geq m+k-M_{12}+v(\mu).$$

Suppose $2m\leq 2M_{12}-M_{13}+v(\mu)$, then (\ref{eqn:Y_I_1_result}) becomes
$$2v\left(u^2+2\mu\xi^{-2} z_y\right)\geq{2m-M_{13}}+v(\mu),$$
or equivalently,
$$v\left(u^2+2\mu\xi^{-2} z_y\right)\geq{m-\frac{M_{13}-v(\mu)}{2}}.$$
We have $v(2\mu\xi^2z_y)=2v(\mu)+M_{12}-M_{13}$. Moreover, by assumption on $(C)$,
$$2v(\mu)+M_{12}-M_{13}\geq m-2v(\mu)+M_{12}-M_{13}\geq m-\frac{M_{13}-v(\mu)}{2}.$$
It follows that
$$4k\geq 2m-M_{13}+v(\mu).$$

The bounds for $m$ are
$$2M_{12}-M_{13}+v(\mu)\geq 2m>-M_{13}+v(\mu).$$
Meanwhile, $k$ satisfies
$$4m>4k\geq 2m-M_{13}+v(\mu).$$

As a result, this part of the sum contributes
\begin{align}
\sum_{m}\sum_{k}q^{m-k}(1-q^{-1}) = &\sum_{m}(q^{\lfloor\frac{2m+M_{13}-v(\mu)}{4}\rfloor}-1)\notag\\
= &\sum_{l=1}^{M_{12}}q^{\lfloor\frac{l}{2}\rfloor}-M_{12}\notag\\
= &2\cdot\frac{q^{\lceil M_{12}/2\rceil}-1}{q-1}-1+\delta_{M_{12}}(1)q^{\lfloor M_{12}/2\rfloor}-M_{12}\label{eqn:integral_I.I.C}
\end{align}
Note that here we apply the change of variable $2l=2m+M_{12}-v(\mu)$.
\vspace{2mm}

Finally, we obtain the relative orbital integral for case $(3)$ by Proposition \ref{prop:elementary} when we plug in the contribution from (\ref{eqn:integral_I.I.B}) and (\ref{eqn:integral_I.I.C}). In conclusion, we have
\begin{align*}
O_{\gamma_\mu}(\mathbbm{1}_{\mathcal{S}(\mathcal{O}_F)})=\frac{1}{2}\Big( & \left(M_{13}-M_{12}+\delta_{M_{12}}(1)\right)q^{\lfloor M_{12}/2\rfloor}\\ & +2\left(1+\delta_{M_{12}-M_{13}}(\mu)\right)\frac{q^{\lceil M_{12}/2\rceil}-1}{q-1}+\delta_{M_{13}}(\mu)-1\Big).
\end{align*}
\qed

\subsection{Proof of case (4)}
Suppose $M_{12}=M_{13}=M_{23}>0$.

\begin{lem}\label{lemma:square_root_I_equality}
If $M_{12}=M_{13}=M_{23}>0$, we have $v(z_y)\geq 0$. Furthermore, $v(z_y\pm\sqrt{z_y^2-\xi^2})=0$ whenever $z_y^2-\xi^2$ is a square in $F^\times$.
\end{lem}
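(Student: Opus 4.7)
The plan is to extract valuation information about $z_y\pm\xi$ from Lemma \ref{lemma:unitary_invariant_valuation} and then deduce the stated facts by elementary manipulations, exploiting that $\xi\in\mathcal{O}_E^\times$ with $\bar\xi=-\xi$ (so $\xi^2\in F^\times$ is a unit).

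First, I would apply Lemma \ref{lemma:unitary_invariant_valuation} with $\nu=\xi^2$, $\nu^{1/2}=\xi$, $v(\nu)=0$:
\begin{align*}
v(z_y+\xi) &= M_{23}+N_{12}-M_{13},\\
v(z_y-\xi) &= M_{12}+N_{23}-M_{13}.
\end{align*}
Under the hypothesis $M_{12}=M_{13}=M_{23}>0$, the dichotomy established just before Proposition~\ref{prop:main1} forces $N_{12}=N_{23}=0$, so both $v(z_y+\xi)$ and $v(z_y-\xi)$ equal $0$; that is, $z_y\pm\xi\in\mathcal{O}_E^\times$. Then
$$z_y=\tfrac{1}{2}\bigl((z_y+\xi)+(z_y-\xi)\bigr)\in\mathcal{O}_E,$$
using that $2$ is a unit. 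Since $z_y\in F$ by \eqref{z-decomp} and $F\cap\mathcal{O}_E=\mathcal{O}_F$, this yields $v(z_y)\geq 0$.

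For the second assertion, observe that $\xi^2\in F^\times$ (because $\bar\xi=-\xi$ forces $\overline{\xi^2}=\xi^2$), hence $z_y^2-\xi^2\in F$; moreover
$$z_y^2-\xi^2=(z_y+\xi)(z_y-\xi)\in\mathcal{O}_E^\times,$$
so $v(z_y^2-\xi^2)=0$. If $z_y^2-\xi^2$ is a square in $F^\times$, then $\sqrt{z_y^2-\xi^2}\in F^\times$ also has valuation $0$. Combined with $v(z_y)\geq 0$, both $z_y\pm\sqrt{z_y^2-\xi^2}$ lie in $\mathcal{O}_F$, so their valuations are non-negative. On the other hand,
$$\bigl(z_y+\sqrt{z_y^2-\xi^2}\bigr)\bigl(z_y-\sqrt{z_y^2-\xi^2}\bigr)=\xi^2,$$
which has valuation $0$, so $v(z_y+\sqrt{z_y^2-\xi^2})+v(z_y-\sqrt{z_y^2-\xi^2})=0$ and both summands must vanish individually.

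There is no genuine obstacle here; the only point one has to be a little careful about is keeping track of which valuation (on $E$ versus on $F$) is in play, but since $E/F$ is unramified and we have normalized $\varpi_E=\varpi_F$, the two agree on elements of $F$ and no confusion arises.
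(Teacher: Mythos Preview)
Your proof is correct and begins from the same ingredient as the paper's---Lemma~\ref{lemma:unitary_invariant_valuation} together with the observation $N_{12}=N_{23}=0$ to get $v(z_y\pm\xi)=0$ and hence $v(z_y)\geq 0$. For the second assertion the paper argues differently: it splits into the cases $v(z_y)>0$ (where $\sqrt{z_y^2-\xi^2}$ dominates the sum) and $v(z_y)=0$ (where it compares leading coefficients of $z_y$ and $\pm\sqrt{z_y^2-\xi^2}$, noting their squares differ by the unit $\xi^2$). Your route via the product identity $(z_y+\sqrt{z_y^2-\xi^2})(z_y-\sqrt{z_y^2-\xi^2})=\xi^2$, combined with both factors lying in $\mathcal{O}_F$, is a bit cleaner in that it avoids the case split, while the paper's leading-coefficient argument is more hands-on and reusable in the later analogous lemmas (e.g.~Lemma~\ref{lemma:square_root_II_equality}).
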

\begin{proof}
By Lemma \ref{lemma:unitary_invariant_valuation}, $v(z_y-\xi)=0$. Thus $v(z_y)\geq 0$.

Suppose $v(z_y)>0$. Then the second assertion holds because $\pm\sqrt{z_y^2-\xi^2}$ is an integral unit.

Suppose $v(z_y)=0$. Then the same conclusion follows from that the leading coefficients of $z_y$ and $\pm\sqrt{z_y^2-\xi^2}$ are not equal (considering their respective squares are not).
\end{proof}

There are again three separate divisions of the sum to discuss. Those are:
\begin{enumerate}
\item[$(A)$] $m>k$, $m+k>M_{12}+v(\mu)$,
\item[$(B)$] $m>k$, $m+k\leq M_{12}+v(\mu)$, and $2m>M_{12}+v(\mu)$.
\item[$(C)$] $m>k$ and $2m\leq M_{12}+v(\mu)$.
\end{enumerate}

\subsubsection{Contribution from $(A)$}
As argued in the previous case, part $(A)$ does not contribute in the orbital integral by Lemma \ref{lemma:square_elementary}.

\subsubsection{Contribution from $(B)$}

From (\ref{eqn:Y_I_2_result}) we derive that
$$4k\geq m+k-M_{12}+v(\mu).$$

Consider (\ref{eqn:Y_I_1_result}). It has no solution when $z_y^2-\xi^2$ is not a square in $F^\times$. We therefore assume $\left(\frac{z_y^2-\xi^2}{F}\right)=1$ to proceed. We then have
$$u^2=-2\mu\xi^2(z_y\pm\sqrt{z_y^2-\xi^2})\pmod{\varpi^{2m-M_{12}}}.$$

Note the congruences above do not have any solution when $\mu=\varpi$. So we will assume $\mu=1$ from this point on until the end of this part.

Since $(z_y+\sqrt{z_y^2-\xi^2})(z_y-\sqrt{z_y^2-\xi^2})=\xi^2$ is a non-square in $F^\times$ and both multiples are in $\mathcal{O}_F^\times$, precisely one of $-2(z_y\pm\sqrt{z_y^2-\xi^2})$ is a square. We choose the sign of roots so that $-2(z_y-\sqrt{z_y^2-\xi^2})$ is the square.

Then for
$$u^2=-2\mu\xi^2(z_y-\sqrt{z_y^2-\xi^2})\pmod{\varpi^{2m-M_{12}}}$$
we conclude that $k=0$ and $u$ is determined pairwise, up to translation by $\varpi^{2m-M_{12}}\mathcal{O}_F$.

In this case, the bounds for $m$ are given by
$M_{12}\geq m>\lfloor\frac{M_{12}}{2}\rfloor$. To conclude, when $\left(\frac{z_y^2-\xi^2}{F}\right)=1$, this part contributes
$$2\left(1-v(\mu)\right)\sum_{m=\lfloor M_{12}/2\rfloor+1}^{M_{12}}q^{M_{12}-m}.$$

On the other hand,
$$u^2=-2\mu\xi^2(z_y+\sqrt{z_y^2-\xi^2})\pmod{\varpi^{2m-M_{12}}}$$
has no solution.

Therefore, the contribution from $(B)$ is
\begin{align}
(1+\left(\frac{z_y^2-\xi^2}{F}\right))\left(1-v(\mu)\right)\frac{q^{\lceil M_{12}/2\rceil}-1}{q-1}.\label{eqn:integral_I.II.B}
\end{align}

\subsubsection{Contribution from $(C)$}

From the computation for (\ref{eqn:integral_I.I.C}) before, we know that the contribution from this part is




\begin{align}
2\cdot\frac{q^{\lceil M_{12}/2\rceil}-1}{q-1}-1+\delta_{M_{12}}(1)q^{\lfloor M_{12}/2\rfloor}-M_{12}\label{eqn:integral_I.II.C}
\end{align}
\vspace{2mm}

The relative orbital integral can be computed by plugging into Proposition \ref{prop:elementary} the contribution from those cases above, namely (\ref{eqn:integral_I.II.B}) and (\ref{eqn:integral_I.II.C}). Thus, in this case
\begin{align*}
O_{\gamma_\mu}(\mathbbm{1}_{\mathcal{S}(\mathcal{O}_F)})=\frac{1}{2}\Big( & \left(\left(1+\left(\frac{z_y^2-\xi^2}{F}\right)\right)\left(1-v(\mu)\right)+2\right)\frac{q^{\lceil M_{12}/2\rceil}-1}{q-1}\\ & +\delta_{M_{12}}(1)q^{\lfloor M_{12}/2\rfloor}+\delta_{M_{12}}(\mu)-1\Big).
\end{align*}
\qed

\section{Formulae on type II tori}\label{sec:main2}

In this section, we consider those $\gamma\in T_1(F)$. That is, $\gamma$ lies in a type II torus.

Then $G_{1\gamma}\cong(\mathbb{Z}/2\mathbb{Z})^2$ and $\mu\in\{1,\xi^2,\varpi,\xi^2\varpi\}$ by Lemma \ref{lemma:endoscopy} and Corollary \ref{coro:rational_orbits} respectively.

The condition that $\gamma\in G(F)$ implies that $\lambda_i\in\mathrm{U}_{1,E/F}$ for any $i$. We thus have $M_{ij},N_{ij}\geq 0$.

\subsection{Formulae for endoscopic orbital integrals}
\begin{prop}\label{prop:main2}\hfill
\begin{enumerate}
\item[$(1)$] When $M_{13}=0$,
$$O_{\gamma_\mu}(\mathbbm{1}_{\mathcal{S}(\OO_F)})=\frac{1}{4}\left((1+\left(\frac{2\mu}{F}\right))\lfloor\frac{M_{23}}{2}\rfloor+(1+\left(\frac{-2\mu}{F}\right))\lfloor\frac{M_{12}}{2}\rfloor+\delta_0(\mu)\right).$$
\item[$(2)$] When $M_{13}>M_{12}=0$,
$$O_{\gamma_\mu}(\mathbbm{1}_{\mathcal{S}(\mathcal{O}_F)})=\frac{1}{4}\left(M_{13}+\delta_{M_{13}}(\mu)\right).$$
\item[$(3)$] When $M_{13}>M_{12}>0$,
\begin{align*}
O_{\gamma_\mu}(\mathbbm{1}_{\mathcal{S}(\mathcal{O}_F)})=\frac{1}{4}\Big( & \left(4+2\left(\frac{-\mu z_y}{F}\right)\right)\frac{q^{\lceil M_{12}/2\rceil}-1}{q-1}\\
& +\left(M_{13}-M_{12}+\delta_{M_{12}}(1)\right)q^{\lfloor M_{12}/2\rfloor}+\delta_{M_{13}}(\mu)-1\Big).
\end{align*}
\item[$(4)$] When $M_{12}>M_{13}>0$,
\begin{align*}
O_{\gamma_\mu}(\mathbbm{1}_{\mathcal{S}(\mathcal{O}_F)})=\frac{1}{4}\Big(& \left(\left(\frac{z_y^2-1}{F}\right)+1\right)\cdot\left(\left(\frac{-2\mu}{F}\right)+1\right)\frac{q^{\lceil M_{13}/2\rceil}-1}{q-1}\\ & +\left(\left(\frac{-2\mu}{F}\right)+1\right)(\lfloor M_{12}/2\rfloor-\lfloor M_{13}/2\rfloor)q^{\lfloor M_{13}/2\rfloor}+2\cdot\frac{q^{\lfloor M_{13}/2\rfloor}-1}{q-1}+\\ &\left(1+\delta_{M_{13}}(\varpi)\right)q^{\lfloor M_{13}/2\rfloor}+\delta_{M_{13}}(\mu)-1\Big).
\end{align*}
\item[$(5)$] When $M_{12}=M_{13}=M_{23}>0$,
\begin{align*}
O_{\gamma_\mu}(\mathbbm{1}_{\mathcal{S}(\mathcal{O}_F)})=\frac{1}{4}\Bigg(\Big( & (1+\left(\frac{z_y^2-1}{F}\Big))\left(1+\left(\frac{-2\mu(z_y+\sqrt{z_y^2-1})}{F}\right)\right)+2\right)\frac{q^{\lceil M_{12}/2\rceil}-1}{q-1}\\
& +\delta_{M_{12}}(1)q^{\lfloor M_{12}/2\rfloor}+\delta_{M_{12}}(\mu)-1\Bigg).
\end{align*}
\end{enumerate}
\end{prop}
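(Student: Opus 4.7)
The plan is to parallel Section \ref{sec:main1}: I would start from Proposition \ref{prop:elementary} with $\nu = 1$, rewrite the two defining congruences in completed-square form, and then partition the $(m,k)$-sum into subregions on which the characteristic function simplifies. Explicitly, the $\nu = 1$ analogues of (\ref{eqn:Y_I_1_result}) and (\ref{eqn:Y_I_2_result}) read
\begin{align*}
v\!\left((u^2 + 2\mu z_y)^2 - 4\mu^2(z_y^2 - 1)\right) & \geq 2m - M_{13} + v(\mu), \\
v(u^4 - 4\mu^2) & \geq m + k - M_{13} + v(\mu).
\end{align*}
The valuations of $z_y^2 - 1$ and $z_y \pm 1$ would be extracted from Lemma \ref{lemma:unitary_invariant_valuation}, and the biquadratic congruences would be reduced to quadratic ones via Lemma \ref{lemma:square_elementary}.

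Cases (1), (2), (3), and (5) should be handled in close analogy with their counterparts in Proposition \ref{prop:main1}, but with two crucial arithmetic changes. First, the biquadratic $u^4 \equiv 4\mu^2$ now admits solutions precisely when $\pm 2\mu \in (F^\times)^2$, so the subregion previously discarded by the non-squareness of $\xi^2$ now contributes; this produces the factors $\left(\frac{\pm 2\mu}{F}\right)$ visible in the formulae for cases (1), (4), and (5). Second, in case (5) the identity $(z_y + \sqrt{z_y^2 - 1})(z_y - \sqrt{z_y^2 - 1}) = 1$ replaces the type I identity in which the product was $\xi^2$; since $1$ is a square, the two roots $-2\mu(z_y \pm \sqrt{z_y^2 - 1})$ lie in the same square class whenever $z_y^2 - 1 \in (F^\times)^2$. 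This collapses the would-be sum over the two roots into the single factor $\left(\frac{-2\mu(z_y + \sqrt{z_y^2 - 1})}{F}\right)$ in the final formula.

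Case (4), where $M_{12} > M_{13} > 0$, is genuinely new: in Section \ref{sec:main1} the constraints for $T_{\xi^2}$ force $M_{13} \geq M_{12} = M_{23}$, whereas here the ultrametric only forces $M_{23} = M_{13}$. I would again split the sum into three subregions according to $m + k$ versus $M_{13} + v(\mu)$ and $2m$ versus $2 M_{13} + v(\mu)$, and solve the two congruences on each piece using Lemma \ref{lemma:square_elementary}, keeping careful track of which of $\pm 2\mu$ and $-2\mu(z_y \pm \sqrt{z_y^2 - 1})$ are squares modulo the relevant powers.

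The main obstacle I anticipate is precisely this case (4): unlike in type I, the subregions corresponding to the completed-square congruence and the biquadratic congruence can both contribute simultaneously, and one must verify that the resulting combinatorial sums collapse to the compact form stated, using the collapses of the form $\left(\frac{-\mu z_y}{F}\right) + \left(\frac{-\mu \xi^2 z_y}{F}\right) = 2\delta_{M_{12}-M_{13}}(\mu)$ already exploited in deriving (\ref{eqn:integral_I.I.B}).
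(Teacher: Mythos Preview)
Your overall plan—start from Proposition \ref{prop:elementary}, complete the square, and partition the $(m,k)$-domain—is exactly the paper's approach, and your reading of cases (1), (2), (3), (5) is essentially right. There are, however, two concrete gaps.

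\textbf{Case (4): wrong threshold and wrong number of regions.} Here $M_{23}=M_{13}$ and $N_{12}=N_{23}=0$, so by Lemma \ref{lemma:unitary_invariant_valuation} one has $v(z_y^2-1)=M_{12}-M_{13}>0$. The threshold at which Lemma \ref{lemma:square_elementary} applies to the completed-square congruence is therefore $2m>M_{12}+v(\mu)$, \emph{not} $2m>2M_{13}+v(\mu)$. Because $M_{12}>M_{13}$, this threshold and the biquadratic threshold $m+k>M_{13}+v(\mu)$ are independent, and the partition must have \emph{four} regions, not three. The paper uses regions $(A)$--$(D)$ accordingly. Region $(A)$ is nontrivial: one must check that the biquadratic solution $u^2\equiv -2\mu$ is compatible with the completed-square solution $u^2\equiv -2\mu(z_y\pm\sqrt{z_y^2-1})$, which holds because $-(z_y-1)\equiv\pm\sqrt{z_y^2-1}$ to the required precision (using $v(z_y-1)=M_{12}-M_{13}$). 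Region $(D)$ itself splits further at $2m=M_{13}+v(\mu)$. Your three-region sketch with threshold $2M_{13}$ would miss this structure entirely; the terms $(\lfloor M_{12}/2\rfloor-\lfloor M_{13}/2\rfloor)q^{\lfloor M_{13}/2\rfloor}$ and $(1+\delta_{M_{13}}(\varpi))q^{\lfloor M_{13}/2\rfloor}$ in the stated formula arise precisely from regions $(B)+(D_1)$ and the tail of $(D_2)$ respectively.

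\textbf{Region $(A)$ in cases (3) and (5).} You assert that the biquadratic subregion, no longer killed by the non-squareness of $\xi^2$, ``now contributes'' in cases (1), (4), (5). In fact, in cases (3) and (5) the region $m+k>M_{13}+v(\mu)$ still contributes zero, but this requires a fresh argument: in (3) the biquadratic forces $k=v(\mu)=0$ while the completed-square forces $v(u^2)=\pm(M_{13}-M_{12})\neq 0$, a contradiction; in (5) a leading-coefficient comparison shows the two congruences are incompatible. The factor $\left(\frac{-2\mu(z_y+\sqrt{z_y^2-1})}{F}\right)$ in case (5) comes entirely from region $(B)$. The identity you mention in your final paragraph, $\left(\frac{-\mu z_y}{F}\right)+\left(\frac{-\mu\xi^2 z_y}{F}\right)=2\delta_{M_{12}-M_{13}}(\mu)$, is a type I artifact and plays no role here; in type II case (3) both roots $z_y\pm\sqrt{z_y^2-1}$ have the same leading-coefficient class as $z_y^{\pm 1}$, so the two $(B)$-pieces each contribute $\left(1+\left(\frac{-\mu z_y}{F}\right)\right)$ without any $\delta$-collapse.
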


We have
\begin{align*}
\underline{M}=\min\limits_{i<j} M_{ij},\\
\overline{M}=\max\limits_{i<j} M_{ij}.
\end{align*}
from \S\ref{sec:intro}. Recall also (Lemma \ref{lemma:endoscopy}) we denote the image of $a\in F^\times$ under the projection map $F^\times\rightarrow D_\gamma$ as $[a]$.

Let $\kappa_s\in\mathfrak{D}(F,G_{1\gamma},G_1)^\mathrm{D}\cong{\mathbb{Z}}/{2\mathbb{Z}}\times{\mathbb{Z}}/{2\mathbb{Z}}$ be chosen as in \S\ref{sec:intro}.

\begin{cor}\label{cor:kappa2}
The endoscopic orbital integrals $SO^{\kappa}_{\gamma}(\mathbbm{1}_{\mathcal{S}(\mathcal{O}_F)})$ (which depends on the choice of $\kappa$) are computed as
\begin{align*}
SO^{\kappa_{\xi^2}}_\gamma(\mathbbm{1}_{\mathcal{S}(\mathcal{O}_F)})=\frac{1}{2}(-1)^{M_{12}-M_{13}}&\left(1+\left(\frac{z_y^2-1}{F}\right)\right)\frac{q^{\lceil \underline{M}/2\rceil}-1}{q-1}+\frac{1}{2}\left(\lfloor\frac{\overline{M}}{2}\rfloor-\lfloor\frac{M_{13}}{2}\rfloor\right)q^{\lfloor M_{13}/2\rfloor}\\
&+\frac{1}{2}(-1)^{M_{13}}
\end{align*}
and
\begin{align*}
SO^{\kappa_s}_\gamma(\mathbbm{1}_{\mathcal{S}(\mathcal{O}_F)})=\frac{1}{2}\Bigg(\kappa_s\left(\left[-2(z_y+\sqrt{z_y^2-1})\right]\right)&\left(1+\left(\frac{z_y^2-1}{F}\right)\right)\frac{q^{\lceil \underline{M}/2\rceil}-1}{q-1}\\&+\left(\frac{\pm2}{F}\right)\left(\lfloor\frac{\overline{M}}{2}\rfloor-\lfloor\frac{M_{13}}{2}\rfloor\right)q^{\lfloor M_{13}/2\rfloor}\Bigg)
\end{align*}
where the sign for $\pm2$ depends on the ordering of $M_{12}$ and $M_{23}$.
\end{cor}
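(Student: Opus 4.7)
The plan is to derive the corollary by summing $O_{\gamma_\mu}(\mathbbm{1}_{\mathcal{S}(\mathcal{O}_F)})$ from Proposition \ref{prop:main2} against $\kappa_s$ across the four cosets $\mu \in \{1, \xi^2, \varpi, \xi^2\varpi\}$ of $D_\gamma$, and repackaging the five individual cases of Proposition \ref{prop:main2} into the single uniform expression stated.

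The main computational tool is three character-sum identities on $D_\gamma \cong (\mathbb{Z}/2\mathbb{Z})^2$: (i) $\sum_\mu \kappa_s(\mu) = 0$ for any nontrivial $\kappa_s$; (ii) since $\delta_N(\mu)$ depends only on $v(\mu) \pmod{2}$ and $\kappa_{\xi^2}$ is the parity character, $\sum_\mu \kappa_s(\mu)\delta_N(\mu) = 2(-1)^N$ when $s = \xi^2$ and vanishes otherwise; and (iii) for any $\alpha \in F^\times$, $\sum_\mu \kappa_s(\mu)\left(1 + \left(\frac{\alpha\mu}{F}\right)\right) = 2\kappa_s(\alpha)$, which follows from the observation that $\mu \mapsto \left(\frac{\alpha\mu}{F}\right)$ takes the value $+1$ on the single coset $[\mu] = [\alpha^{-1}]$ and $-1$ elsewhere. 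With these in hand, I would process the cases of Proposition \ref{prop:main2} one by one. First identify $\underline{M}$ and $\overline{M}$: the ultrametric relation for $\lambda_1 - \lambda_3 = (\lambda_1 - \lambda_2) + (\lambda_2 - \lambda_3)$ forces $\underline{M} = M_{12} = M_{23}$ and $\overline{M} = M_{13}$ in cases (1)--(3), $\underline{M} = M_{13} = M_{23}$ and $\overline{M} = M_{12}$ in case (4), and all three invariants equal in case (5). Then apply (i)--(iii) to each summand. Terms independent of $\mu$ are killed by (i); the $\delta_N(\mu)$ terms survive only for $\kappa_{\xi^2}$ via (ii) and supply the $\frac{1}{2}(-1)^{M_{13}}$ tail of the first formula; the quadratic-symbol summands $\left(\frac{\pm 2\mu}{F}\right)$, $\left(\frac{-\mu z_y}{F}\right)$, and $\left(\frac{-2\mu(z_y+\sqrt{z_y^2-1})}{F}\right)$ appearing in cases (1), (3), (4), (5) collapse to $\kappa_s$-values at the corresponding constants by (iii). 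Under the evaluations $\kappa_{\xi^2}(\alpha) = (-1)^{v(\alpha)}$ for $\alpha \in F^\times$ and $\kappa_s(\alpha) = \left(\frac{\alpha}{F}\right)$ for $s \in \{\varpi, \xi^2\varpi\}$ and $\alpha \in \mathcal{O}_F^\times$, each output lands exactly on a term of the claimed formulae.

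To unify the answer across cases I would verify two compatibility claims. Whenever $M_{12} - M_{13}$ is odd, Lemma \ref{lemma:unitary_invariant_valuation} gives $v(z_y^2-1) = v(z_y+1) + v(z_y-1) \equiv M_{12} - M_{13} \pmod{2}$, forcing $\left(\frac{z_y^2-1}{F}\right) = -1$ and the prefactor $\left(1 + \left(\frac{z_y^2-1}{F}\right)\right)$ to vanish; the sign $(-1)^{M_{12}-M_{13}}$ is thus inconsequential in that regime. Moreover, the class $[-2(z_y + \sqrt{z_y^2-1})]$ collapses to $[-z_y]$ in case (3), where $v(z_y) < 0$ ensures that $z_y + \sqrt{z_y^2-1}$ and $2z_y$ share a leading coefficient modulo squares, and to $[-2]$ in cases (4)--(5), where an analogous analysis of leading coefficients shows $z_y + \sqrt{z_y^2-1}$ to be a unit square. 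The main obstacle is the bookkeeping of the sign $\pm 2$ in the second formula: it tracks whether $\overline{M} = M_{12}$ or $\overline{M} = M_{23}$, a distinction visible already in Proposition \ref{prop:main2}(1) where $\lfloor M_{23}/2\rfloor$ is paired with $\left(\frac{2\mu}{F}\right)$ and $\lfloor M_{12}/2\rfloor$ with $\left(\frac{-2\mu}{F}\right)$. Identity (iii) turns these into $\left(\frac{2}{F}\right)$ and $\left(\frac{-2}{F}\right)$ respectively, and consolidating them into a single prefactor requires an explicit case split on the ordering of $M_{12}$ and $M_{23}$, together with the $G_1(F)$-symmetry swapping $\lambda_1 \leftrightarrow \lambda_3$ to reduce the reversed ordering to the one already treated in Proposition \ref{prop:main2}.
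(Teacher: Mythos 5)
Your proposal is correct and is essentially the paper's own proof: the paper likewise evaluates $\sum_{\mu}\kappa_s(\mu)\,O_{\gamma_\mu}(\mathbbm{1}_{\mathcal{S}(\mathcal{O}_F)})$ case by case from Proposition \ref{prop:main2}, using $\sum_{\mu}\kappa_s(\mu)\delta_N(\mu)=2(-1)^N$ for $s=\xi^2$ (and $0$ otherwise) together with the collapse of the quadratic-symbol sums to $\kappa_s$-values of $[-z_y]$, $[-2]$, $[-2(z_y+\sqrt{z_y^2-1})]$, then tabulates the results and unifies them via $\underline{M},\overline{M}$, handling $M_{23}>M_{13}$ by symmetry. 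The only slip is your blanket claim $v(z_y^2-1)\equiv M_{12}-M_{13}\pmod 2$, which fails in case $(3)$ (there $v(z_y^2-1)=2(M_{12}-M_{13})$ and the symbol is $+1$ by Lemma \ref{lemma:square_root_II_negative}), but it does hold in case $(4)$ where $N_{12}=N_{23}=0$ and $M_{23}=M_{13}$, which is the only place you need it, so the argument is unaffected.
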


\begin{proof}
The character sums of rational orbital integrals are computed as in the following tables:
\vspace{3pt}
\begin{center}
\begin{adjustbox}{width=\columnwidth,center}
\begin{tabular}{|c|c|}
\hline
& $\kappa=\kappa_{\xi^2}$ \\
\hline
$M_{13}=0$ & $\frac{1}{2}\left(\lfloor\frac{M_{12}}{2}\rfloor+\lfloor\frac{M_{23}}{2}\rfloor+1\right)$ \\
$M_{13}>M_{12}=0$ & $\frac{1}{2}(-1)^{M_{13}}$\\
$M_{13}>M_{12}>0$ & $(-1)^{M_{12}-M_{13}}\frac{q^{\lceil M_{12}/2\rceil}-1}{q-1}+\frac{1}{2}(-1)^{M_{13}}$\\
$M_{12}>M_{13}>0$ & $\frac{1}{2}\left(1+\left(\frac{z_y^2-1}{F}\right)\right)\frac{q^{\lceil M_{13}/2\rceil}-1}{q-1}+\frac{1}{2}(\lfloor M_{12}/2\rfloor-\lfloor M_{13}/2\rfloor)q^{\lfloor M_{13}/2\rfloor}+\frac{1}{2}(-1)^{M_{13}}$ \\
$M_{23}>M_{13}>0$ & $\frac{1}{2}\left(1+\left(\frac{z_y^2-1}{F}\right)\right)\frac{q^{\lceil M_{13}/2\rceil}-1}{q-1}+\frac{1}{2}(\lfloor M_{23}/2\rfloor-\lfloor M_{13}/2\rfloor)q^{\lfloor M_{13}/2\rfloor}+\frac{1}{2}(-1)^{M_{13}}$ \\
$M_{12}=M_{13}=M_{23}>0$ & $\frac{1}{2}\left(1+\left(\frac{z_y^2-1}{F}\right)\right)\frac{q^{\lceil M_{12}/2\rceil}-1}{q-1}+\frac{1}{2}(-1)^{M_{12}}$ \\
\hline
\end{tabular}
\end{adjustbox}
\end{center}
\vspace{2mm}
and
\vspace{2mm}
\begin{center}
\begin{adjustbox}{width=\columnwidth,center}
\begin{tabular}{|c|c|}
\hline
& $\kappa=\kappa_\varpi\text{ or }\kappa_{\xi^2\varpi}$\\
\hline
$M_{13}=0$ & $\frac{1}{2}\left(\left(\frac{2}{F}\right)\lfloor\frac{M_{23}}{2}\rfloor+\left(\frac{-2}{F}\right)\lfloor\frac{M_{12}}{2}\rfloor\right)$ \\
$M_{13}>M_{12}=0$ & $0$\\
$M_{13}>M_{12}>0$ & $\kappa([-z_y])\frac{q^{\lceil M_{12}/2\rceil}-1}{q-1}$\\
$M_{12}>M_{13}>0$ & $\frac{1}{2}\left(1+\left(\frac{z_y^2-1}{F}\right)\right)\left(\frac{-2}{F}\right)\frac{q^{\lceil M_{13}/2\rceil}-1}{q-1}+\frac{1}{2}\left(\frac{-2}{F}\right)(\lfloor M_{12}/2\rfloor-\lfloor M_{13}/2\rfloor)q^{\lfloor M_{13}/2\rfloor}$\\
$M_{23}>M_{13}>0$ & $\frac{1}{2}\left(1+\left(\frac{z_y^2-1}{F}\right)\right)\left(\frac{2}{F}\right)\frac{q^{\lceil M_{13}/2\rceil}-1}{q-1}+\frac{1}{2}\left(\frac{2}{F}\right)(\lfloor M_{23}/2\rfloor-\lfloor M_{13}/2\rfloor)q^{\lfloor M_{13}/2\rfloor}$\\
$M_{12}=M_{13}=M_{23}>0$ & $\frac{1}{2}\left(1+\left(\frac{z_y^2-1}{F}\right)\right)\kappa([-2(z_y+\sqrt{z_y^2-1})])\frac{q^{\lceil M_{12}/2\rceil}-1}{q-1}$\\
\hline
\end{tabular}
\end{adjustbox}
\end{center}
\vspace{3pt}

Those formulae follows from Proposition \ref{prop:main2} directly. Note that
$$\sum_{\mu}\kappa_s(\mu)\delta_N(\mu)=\begin{cases}
2\cdot(-1)^{N} & \text{if }s=\xi^2,\\
0 & \text{if }s=\varpi\text{ or }\xi^2\varpi.
\end{cases}$$

The case where $M_{23}>M_{13}>0$ is analogous in symmetry to the case of $M_{12}>M_{13}>0$.
\end{proof}
We will devote the rest of the section in proving Proposition \ref{prop:main2}.\vspace{2mm}

By Proposition \ref{prop:elementary}, we focus on solving
\begin{align}
v\left(\mu+z_yu^2+\frac{1}{4}\mu^{-1}u^4\right)\geq{2m}-M_{13},\label{eqn:Y_II_1}\\
v\left(\mu-\frac{1}{4}\mu^{-1}u^4\right)\geq{m+k}-M_{13}.\label{eqn:Y_II_2}
\end{align}

\subsection{Proof of case (1)}
Suppose $M_{13}=0$.


We rewrite (\ref{eqn:Y_II_2}) as
$$u^4\equiv 4\mu^2\quad(\text{mod }\varpi^{m+k+v(\mu)}).$$
In particular, this implies that both $v(\mu)$ and $k$ are $0$ by comparing the valuation on each side.

By Lemma \ref{lemma:square_elementary}, the congruence above is equivalent to
$$u^2\equiv\pm 2\mu\quad(\text{mod }\varpi^{m}),$$
which can have solutions only when at least one of $\pm 2\mu$ is a square in $F^\times$.

Suppose $\left(\frac{2\mu}{F}\right)=1$. Then, we denote a square root of $2\mu$ as $\sqrt{2\mu}$ and write $u=\pm \sqrt{2\mu}+u_1$, where $v(u_1)\geq m$. Then
\begin{align*}
u^2=2\mu\pm2\sqrt{2\mu}\cdot u_1\qquad & (\text{mod }\varpi^{2m}),\\
u^4=4\mu^2\pm8\mu \sqrt{2\mu}\cdot u_1\qquad & (\text{mod }\varpi^{2m}).
\end{align*}
Substituting in (\ref{eqn:Y_II_1}), we see that $m$ satisfies
$$(\mu\pm\sqrt{2\mu}u_1)(z_y+1)=0\pmod{\varpi^{2m}}.$$

By Lemma \ref{lemma:unitary_invariant_valuation}, we know that $v(z_y+1)=M_{23}+N_{12}$, so the above equation implies
$$2m\leq M_{23}+N_{12}.$$
On the other hand, since $k=0$, by (\ref{eqn:X-coefficient}), we also have $2m\leq M_{23}+M_{12}$. Furthermore, at least one of $M_{i2}$ or $N_{i2}$ vanishes for $i=1$ and $3$. Thus $\min(N_{12},M_{12})=0$ and so
$$M_{23}\geq 2m>0.$$
$u$ is determined in pairs, while the choice of $u_1$ is up to translation by $\varpi^{m}\mathcal{O}_F$. Therefore this part contributes
$$(1+\left(\frac{2\mu}{F}\right))\lfloor\frac{M_{23}}{2}\rfloor$$
to the orbital integral.

Suppose $\left(\frac{-2\mu}{F}\right)=1$, then a completely symmetric argument, replacing $z_y+1$ by $z_y-1$, implies that
$$(1+\left(\frac{-2\mu}{F}\right))\lfloor\frac{M_{12}}{2}\rfloor.$$

Plugging the above contributions into Proposition \ref{prop:elementary} yields the result:
$$O_{\gamma_\mu}(\mathbbm{1}_{\mathcal{S}(\OO_F)})=\frac{1}{4}\left((1+\left(\frac{2\mu}{F}\right))\lfloor\frac{M_{23}}{2}\rfloor+(1+\left(\frac{-2\mu}{F}\right))\lfloor\frac{M_{12}}{2}\rfloor+\delta_0(\mu)\right).$$
\qed

For the rest of the section, we assume $M_{13}>0$. Then we can rewrite (\ref{eqn:Y_II_1}) as
\begin{align}
v\left((u^2+2\mu z_y)^2-4\mu^2(z_y^2-1)\right)\geq 2m-M_{13}+v(\mu).\label{eqn:Y_II_1_result}
\end{align}
Also, (\ref{eqn:Y_II_2}) implies
\begin{align}
v\left(u^4-4\mu^2\right)\geq{m+k}-M_{13}+v(\mu).\label{eqn:Y_II_2_result}
\end{align}

As seen before in \S\ref{sec:main1}, the possible cases for $M_{ij}$ are:
\begin{itemize}
\item $M_{12}=M_{23}=0$, or
\item both $M_{12}$ and $M_{23}$ are positive, and $N_{12}=N_{13}=0$.
\end{itemize}

\subsection{Proof of case (2)}
Suppose $M_{13}>M_{12}=0$.

When $M_{12}=M_{23}=0$, there does not exist $k$ so that $2m>2k\geq 2m-M_{12}-M_{23}$. Thus
$$O_{\gamma_\mu}(\mathbbm{1}_{\mathcal{S}(\mathcal{O}_F)})=\frac{1}{4}\left(M_{13}+\delta_{M_{13}}(\mu)\right).$$\qed

By Lemma \ref{lemma:unitary_invariant_valuation}, we know that
$$v(z_y^2-1)=N_{12}+M_{12}+N_{23}+M_{23}-2M_{13}-v(\nu).$$
Suppose $M_{12}>0$, then we know $N_{12}=N_{23}=0$ and so $v(z_y^2-1)=M_{12}+M_{23}-2M_{13}$.

\subsection{Proof of case (3)}
Suppose $M_{13}>M_{12}>0$.

In this case $v(z_y^2-1)=2M_{12}-2M_{13}<0$ and $v(z_y)=M_{12}-M_{13}$.
\begin{lem}\label{lemma:square_root_II_negative}
If $M_{13}>M_{12}$, we have $z_y^2-1\in(F^\times)^2$.
Furthermore, one of $v(z_y\pm\sqrt{z_y^2-1})$ equals $M_{12}-M_{13}$, and the other valuation equals $M_{13}-M_{12}$. 
\end{lem}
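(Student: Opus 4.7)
The proof plan is to mimic the structure of Lemma \ref{lemma:square_root_I_negative} essentially verbatim, replacing $\xi^2$ by $1$ throughout. The hypothesis $M_{13}>M_{12}$ together with Lemma \ref{lemma:unitary_invariant_valuation} gives
$$v(z_y^2-1) = M_{12}+M_{23}+N_{12}+N_{23}-2M_{13} = 2M_{12}-2M_{13} < 0,$$
where we used $N_{12}=N_{23}=0$ (which holds since $M_{12},M_{23}>0$ in the relevant case). In particular $v(z_y)=M_{12}-M_{13}<0$.

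For the first assertion, I would observe that $z_y^2-1 \in F^\times$ has the same valuation (an even integer) and the same leading coefficient as $z_y^2$, since subtracting $1$ only affects terms of strictly higher valuation. Because $z_y^2$ is a square in $F^\times$, its leading coefficient is a square in $k_F^\times$; by the Hensel-style criterion discussed after Definition \ref{def:quad_symbol}, this implies $z_y^2-1 \in (F^\times)^2$.

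For the second assertion, one notes that $v(\sqrt{z_y^2-1})=\tfrac{1}{2}v(z_y^2-1)=M_{12}-M_{13}=v(z_y)$. Since $\mathrm{char}\,F \neq 2$, we have $z_y \not\equiv -z_y \pmod{\varpi^{M_{12}-M_{13}+1}}$, so at least one of $z_y\pm\sqrt{z_y^2-1}$ still has valuation $M_{12}-M_{13}$. The product identity
$$\bigl(z_y+\sqrt{z_y^2-1}\bigr)\bigl(z_y-\sqrt{z_y^2-1}\bigr)=1$$
forces $v(z_y+\sqrt{z_y^2-1})+v(z_y-\sqrt{z_y^2-1})=0$, so the other sign must carry valuation $M_{13}-M_{12}$.

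There is no real obstacle here: the argument is a direct transcription of the proof of Lemma \ref{lemma:square_root_I_negative}, with the only substantive difference being that the constant $\xi^2$ is replaced by $1$ so that the product of the two roots becomes $1$ instead of $\xi^2$. The only thing to double-check is that the computation of $v(z_y^2-1)$ in terms of $M_{ij}, N_{ij}$ uses $\nu=1$ so that the $-\tfrac{1}{2}v(\nu)$ correction from Lemma \ref{lemma:unitary_invariant_valuation} disappears, which it does.
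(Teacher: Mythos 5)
Your proposal is correct and follows essentially the same route as the paper: the valuation count $v(z_y^2-1)=2M_{12}-2M_{13}<0$ via Lemma \ref{lemma:unitary_invariant_valuation}, the leading-coefficient comparison with $z_y^2$ for squareness, and the product identity $(z_y+\sqrt{z_y^2-1})(z_y-\sqrt{z_y^2-1})=1$ to pin down the two valuations. The paper's proof is likewise a direct adaptation of Lemma \ref{lemma:square_root_I_negative} with $\xi^2$ replaced by $1$, so there is nothing to add.
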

\begin{proof}\hfill
For the first assertion, it suffices to note that $z_y\in F$ and the leading coefficient of $z_y^2-1$ is the same as that of $z_y^2$.

Given the existence of $\sqrt{z_y^2-1}$, we know in addition $v(\sqrt{z_y^2-1})=v(z_y)$. Thus at least one of $z_y\pm\sqrt{z_y^2-1}$ has valuation $v(z_y)=M_{12}-M_{13}$.

For the last assertion, observation that
$$(z_y+\sqrt{z_y^2-1})(z_y-\sqrt{z_y^2-1})=1.$$
So $v(z_y-\sqrt{z_y^2-1})+v(z_y+\sqrt{z_y^2-1})=0$.
\end{proof}
Like before, we separate the domain for choosing $m$ and $k$ into three divisions:
\begin{enumerate}
\item[$(A)$] $m>k$ and $m+k>M_{13}+v(\mu)$.
\item[$(B)$] $m>k$, $m+k\leq M_{13}+v(\mu)$, and $2m>2M_{12}-M_{13}+v(\mu)$.
\item[$(C)$] $m>k$ and $2m\leq 2M_{12}-M_{13}+v(\mu)$.
\end{enumerate}

\subsubsection{Contribution from $(A)$}

If $m+k>M_{13}+v(\mu)$, by (\ref{eqn:Y_II_2_result}) we have
$$u^2\equiv\pm2\mu\quad(\text{mod }\varpi^{m+k-M_{13}}).$$
It follows that $v(\mu)=k=0$.

On the other hand, since $2m>2M_{12}-M_{13}+v(\mu)$, we can apply Lemma \ref{lemma:square_elementary} onto (\ref{eqn:Y_II_1_result}) to yield
$$v\left(u^2+2\mu(z_y\pm\sqrt{z_y^2-1})\right)\geq{2m-M_{12}}.$$
The terms $2\mu(z_y\pm\sqrt{z_y^2-1})$ has nonzero valuation by Lemma \ref{lemma:square_root_II_negative}. But then
$$v\left(u^2+2\mu(z_y\pm\sqrt{z_y^2-1})\right)\leq v(u^2)\leq 0$$
which is contradictory.

Thus a common solution for (\ref{eqn:Y_II_1}) and (\ref{eqn:Y_II_2}) does not exist, and the contribution from this part is $0$.

\subsubsection{Contribution from $(B)$}

From (\ref{eqn:Y_II_2_result}) we derive
$$4k\geq m+k-M_{13}+v(\mu).$$

Consider (\ref{eqn:Y_II_1_result}). For convenience of notation, we fix the choice of square roots on $\sqrt{z_y^2-1}$ so that $v(z_y+\sqrt{z_y^2-1})=M_{12}-M_{13}$ (see Lemma \ref{lemma:square_root_II_negative}).

We then have
$$u^2\equiv-2\mu(z_y+\sqrt{z_y^2-1})\quad(\text{mod }\varpi^{2m-M_{12}}).$$
The leading coefficient and valuation of $-2\mu(z_y+\sqrt{z_y^2-1})$ is the same as those of $-4\mu z_y$.

For now, suppose $\mu$ is chosen so that $\left(\frac{-\mu z_y}{F}\right)=1$. Then $$k=\frac{1}{2}\left(M_{12}-M_{13}+v(\mu)\right)$$
and $u$ is determined pairwise, up to translation by $\varpi^{2m-M_{12}-k}\mathcal{O}_F$.

The choice of $m$ satisfies
$$2k+2M_{12}\geq 2m > 2k+M_{12}.$$
the contribution from this part to the orbital integral is therefore
\begin{align}
\left(1+\left(\frac{-\mu z_y}{F}\right)\right)\sum_{m}q^{k-m+M_{12}}& =\left(1+\left(\frac{-\mu z_y}{F}\right)\right)\frac{q^{\lceil M_{12}/2\rceil}-1}{q-1}.
\label{eqn:integral_II.I.B1}
\end{align}

On the other hand, we consider
$$v\left(u^2+2\mu(z_y-\sqrt{z_y^2-1})\right)\geq{2m-M_{12}}.$$
Then solving it depends upon the ordering of
$$2m-M_{12}\quad\text{ and }\quad M_{13}-M_{12}+v(\mu).$$

Suppose $2m>M_{13}+v(\mu)$. Since the leading coefficient of $z_y-\sqrt{z_y^2-1}$ is the same as that of $(2z_y)^{-1}$, solutions for $u$ exist only if $\left(\frac{-\mu z_y}{F}\right)=1$. Suppose $\mu$ is chosen so, then $k=\frac{1}{2}\left(M_{13}-M_{12}+v(\mu)\right)$ and $u$ can be determined pairwise, up to translation by $\varpi^{2m-M_{12}-k}\mathcal{O}_F$.

The inequalities for $m$ to satisfy simplifies to
$$2k+2M_{12}\geq 2m>2k+M_{12}$$
and this part contributes another
\begin{align}
\left(1+\left(\frac{-\mu z_y}{F}\right)\right)\frac{q^{\lceil M_{12}/2\rceil}-1}{q-1}\label{eqn:integral_II.I.B2}
\end{align}
to the orbital integral.

Suppose $2m\leq M_{13}+v(\mu)$. Then equation from above reduces to
$$2k\geq 2m-M_{12}.$$

We have
$$M_{13}+v(\mu)\geq 2m>2M_{12}-M_{13}+v(\mu),$$
and for any fixed $m$,
$$2m>2k\geq 2m-M_{12}.$$

As a result, this part contributes a sums of
\begin{align}
\sum_{m}\sum_{k=m-\lfloor\frac{M_{12}}{2}\rfloor}^{m-1}q^{m-k}(1-q^{-1}) & =\sum_{m}(q^{\lfloor\frac{M_{12}}{2}\rfloor}-1)\notag\\
& =(M_{13}-M_{12})(q^{\lfloor\frac{M_{12}}{2}\rfloor}-1).\label{eqn:integral_II.I.B3}
\end{align}

The total contribution for $(B)$ is therefore the sum of (\ref{eqn:integral_II.I.B1}), (\ref{eqn:integral_II.I.B2}), and (\ref{eqn:integral_II.I.B3}). That is
\begin{align}
2\left(1+\left(\frac{-\mu z_y}{F}\right)\right)\frac{q^{\lceil M_{12}/2\rceil}-1}{q-1}+(M_{13}-M_{12})(q^{\lfloor M_{12}/2\rfloor}-1).\label{eqn:integral_II.I.B}
\end{align}

\subsubsection{Contribution from $(C)$}

From (\ref{eqn:Y_II_2_result}) we derive that
$$4k\geq m+k-M_{13}+v(\mu).$$

Now (\ref{eqn:Y_II_1_result}) becomes
$$2v\left(u^2+2\mu z_y\right)\geq{2m-M_{13}+v(\mu)}$$
or equivalently,
$$v(u^2+2\mu z_y)\geq m-\frac{M_{13}-v(\mu)}{2}.$$
So in this case we have
$$4k\geq 2m-M_{13}+v(\mu).$$

The bounds of $m$ are
$$2M_{12}-M_{13}+v(\mu)\geq 2m>-M_{13}+v(\mu).$$
The bounds of $k$ for a fixed $m$ is given by
$$4m>4k\geq 2m-M_{13}+v(\mu).$$

As computed, this part of the integral sums up to
\begin{align}
2\cdot\frac{q^{\lceil M_{12}/2\rceil}-1}{q-1}-1+\delta_{M_{12}}(1)q^{\lfloor M_{12}/2\rfloor}-M_{12}\label{eqn:integral_II.I.C}
\end{align}
\vspace{2mm}

The orbital integral is obtained by plugging (\ref{eqn:integral_II.I.B}), and (\ref{eqn:integral_II.I.C}) into Proposition \ref{prop:elementary}. That is

\begin{align*}
O_{\gamma_\mu}(\mathbbm{1}_{\mathcal{S}(\mathcal{O}_F)})=\frac{1}{4}\Big( & \left(4+2\left(\frac{-\mu z_y}{F}\right)\right)\frac{q^{\lceil M_{12}/2\rceil}-1}{q-1}\\
& +\left(M_{13}-M_{12}+\delta_{M_{12}}(1)\right)q^{\lfloor M_{12}/2\rfloor}+\delta_{M_{13}}(\mu)-1\Big).
\end{align*}
\qed

\subsection{Proof of case (4)}
Suppose $M_{12}>M_{13}>0$.

We begin with a lemma in this setting which is analogous to Lemma \ref{lemma:square_root_II_negative}.
\begin{lem}\label{lemma:square_root_II_positive}
If $M_{12}>M_{13}$, we know $v(z_y-1)=M_{12}-M_{13}>0$. So the leading coefficient of $z_y$ is $1$, and $v(z_y)=0$.
\end{lem}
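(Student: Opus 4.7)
My plan is to read off both conclusions directly from the valuation formula for $z_y\pm\nu^{1/2}$ supplied by Lemma \ref{lemma:unitary_invariant_valuation}, together with the previously observed relationship between the $M_{ij}$ and $N_{ij}$ in type~II.

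First I would specialize Lemma \ref{lemma:unitary_invariant_valuation} to $\nu=1$ (i.e.\ type II), which gives
\begin{equation*}
v(z_y-1)=M_{12}+N_{23}-M_{13}.
\end{equation*}
Next I would invoke the case analysis already made in this section: since $\gamma$ is regular and $\lambda_i\in\mathrm{U}_{1,E/F}$ for all $i$, we noted earlier that the only possibilities are either $M_{12}=M_{23}=0$, or else $M_{12},M_{23}>0$ together with $N_{12}=N_{23}=0$. The hypothesis $M_{12}>M_{13}>0$ forces $M_{12}>0$, so we land in the second alternative and in particular $N_{23}=0$. Plugging this back in yields $v(z_y-1)=M_{12}-M_{13}$, which is strictly positive by assumption.

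From $v(z_y-1)>0$ we immediately deduce $z_y\equiv 1\pmod{\varpi}$, so that $z_y\in\mathcal{O}_F^\times$ with leading coefficient $1$ and $v(z_y)=0$, completing the proof.

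There is no real obstacle here; the statement is a straightforward bookkeeping consequence of the identities in Lemma \ref{lemma:unitary_invariant_valuation} once we recall that $N_{23}=0$ in the regime under consideration. The only subtlety worth emphasizing in the write-up is why $N_{23}=0$, which is a recap of the earlier observation that $(\lambda_2+\lambda_3)-(\lambda_2-\lambda_3)=2\lambda_3$ is a unit, so at most one of $M_{23}$, $N_{23}$ can be positive, and here $M_{23}$ must be the positive one.
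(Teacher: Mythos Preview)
Your proof is correct and follows essentially the same approach as the paper: both invoke Lemma \ref{lemma:unitary_invariant_valuation} with $\nu=1$ to obtain $v(z_y-1)=M_{12}+N_{23}-M_{13}$, then argue $N_{23}=0$ because $M_{23}=M_{13}>0$, and read off the conclusion. Your write-up is slightly more explicit about why $N_{23}=0$ (via $2\lambda_3$ being a unit), but the substance is identical.
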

\begin{proof}
By Lemma \ref{lemma:unitary_invariant_valuation}, under current assumption we have
$$
v(z_y-1)=M_{12}+N_{23}-M_{13}.
$$
Note that $N_{23}=0$ since $M_{23}=M_{13}>0$.
\end{proof}

As before, we separate the integral into divisions in accordance with equations (\ref{eqn:Y_II_1_result}) and (\ref{eqn:Y_II_2_result}). There will be four divisions:
\begin{enumerate}
\item[$(A)$] $m>k$, $m+k>M_{13}+v(\mu)$, and $2m>M_{12}+v(\mu)$.
\item[$(B)$] $m>k$, $m+k>M_{13}+v(\mu)$, and $2m\leq M_{12}+v(\mu)$.
\item[$(C)$] $m>k$, $m+k\leq M_{13}+v(\mu)$, and $2m>M_{12}+v(\mu)$.
\item[$(D)$] $m>k$, $m+k\leq M_{13}+v(\mu)$, and $2m\leq M_{12}+v(\mu)$.
\end{enumerate}

\begin{center}
\begin{figure}[ht]
\begin{tikzpicture}
\draw [thin, blue!60] (-1,-1) -- (2.6,-1);
\draw [thin, blue!60] (-1,-1) -- (2.6,2.6);
\draw [thin, blue!60] (-1,-1) -- (-1,2.6);
\draw [thin, blue!60] (1,1) -- (2.6,-0.6);
\draw [thin, blue!60] (1.75,1.75) -- (1.75,-1);
\foreach \x in {-3,-2,...,5}
\foreach \y in {-3,-2,...,5}
\node[draw,circle,inner sep=0.3pt,fill,black!30] at (1/2*\x,1/2*\y) {};
\draw [thin, black,-latex] (-2,0) -- (3,0);
\draw [thin, black,-latex] (0,-2) -- (0,3);
\node at (3.2,0.2) {$m$};
\node at (-0.2,3.2) {$k$};
\node at (2.5,1.2) {$(A)$};
\node at (1.4,1) {$(B)$};
\node at (2.2,-0.75) {$(C)$};
\node at (1,-0.6) {$(D)$};
\end{tikzpicture}
\caption*{The divisions when $v(\mu)=0$, $M_{13}=4$, and $M_{12}=7$.}
\end{figure}
\end{center}

\subsubsection{Contribution from $(A)$}

We know from previous cases that $v(\mu)=k=0$, and
$$u^2\equiv\pm2\mu\quad(\text{mod }\varpi^{m+k-M_{13}}).$$

Note that by Lemma \ref{lemma:square_root_II_positive},
$\left(\frac{z_y^2-1}{F}\right)=\Big(\frac{2}{F}\Big)\Big(\frac{z_y-1}{F}\Big)$. In particular, $\left(\frac{z_y^2-1}{F}\right)=1$ only when $v(z_y-1)=M_{12}-M_{13}\in2\mathbb{Z}$. Suppose this is the case because otherwise this part of the integral vanishes, due to the absence of a solution for (\ref{eqn:Y_II_1_result}).

Now, by (\ref{eqn:Y_II_1_result}) and Lemma \ref{lemma:square_elementary} we have
$$v\left(u^2+2\mu(z_y\pm\sqrt{z_y^2-1})\right)\geq{2m-\frac{M_{12}+M_{13}}{2}}.$$
The leading coefficient of $-2\mu(z_y\pm\sqrt{z_y^2-1})$ is the same as that of $-2\mu$. Since
$$u^2\equiv\pm 2\mu\quad(\text{mod }\varpi^{m+k-M_{13}})$$
can be derived from (\ref{eqn:Y_II_2_result}), no solution for $u$ exist unless the signs from the two equations coincide.

Thus, the congruences to solve for become
\begin{align*}
u^2\equiv-2\mu\qquad & (\text{mod }\varpi^{m-M_{13}}),\\
u^2\equiv-2\mu(z_y\pm\sqrt{z_y^2-1})\qquad & (\text{mod }\varpi^{2m-\frac{M_{12}+M_{13}}{2}}).
\end{align*}

In order for the equations above to be consistent, we need
$$-(z_y-1)\equiv\pm\sqrt{z_y^2-1}\quad(\text{mod }\varpi^{m-M_{13}}).$$
Since $k=0$, we have
$$m\leq\frac{M_{12}+M_{13}}{2}$$
from (\ref{eqn:X-coefficient}) and so the congruence above always holds. That is, it suffices to solve for the second congruence.

The inequalities that $m$ satisfies are
\begin{align*}
M_{12}+M_{13}\geq & 2m\\
& m>M_{13}\\
& 2m>M_{12}
\end{align*}
For available choices of $m$, solutions for $u$ are determined in quadruples, up to translation by $\varpi^{2m-\frac{M_{12}+M_{13}}{2}}\mathcal{O}_F$.

Therefore, we may write the contribution from this part as
\begin{align}
\frac{1}{4}\left(\left(\frac{-2\mu}{F}\right)+1\right)\left(\left(\frac{z_y^2-1}{F}\right)+1\right)\sum_{m=\max(\lfloor M_{12}/2\rfloor,M_{13})+1}^{(M_{12}+M_{13})/2}q^{\frac{M_{12}+M_{13}}{2}-m}.\label{eqn:integral_II.II.A}
\end{align}

\subsubsection{Contribution from $(B)$}

Since $m+k\geq M_{13}+v(\mu)$, by (\ref{eqn:Y_II_2_result}) we have
$$u^2\equiv\pm 2\mu\quad(\text{mod }\varpi^{m+k-M_{13}})$$
and $k=v(\mu)=0$. Thus $M_{12}+M_{23}\geq 2m>0$.

In this case (\ref{eqn:Y_II_1_result}) simplifies to
$$2v(u^2+2\mu z_y)\geq{2m-M_{13}}.$$

Matching it to (\ref{eqn:Y_II_2_result}) as we did in $(B)$ shows
$$u^2\equiv-2\mu\quad(\text{mod }\varpi^{m-M_{13}}).$$
The two congruences are consistent because
$$v(z_y-1)=M_{12}-M_{13}>m-M_{13}.$$

This part of the integral contributes
\begin{align}
\frac{1}{4}\left(\left(\frac{-2\mu}{F}\right)+1\right)\max(\lfloor\frac{M_{12}}{2}\rfloor-M_{13},0)\cdot q^{\lfloor\frac{M_{13}}{2}\rfloor}.\label{eqn:integral_II.II.B}
\end{align}
since $u$ is determined pairwise, up to translation by $\varpi^{m-\lfloor\frac{M_{13}}{2}\rfloor}\mathcal{O}_F$.

\subsubsection{Contribution from $(C)$}

In this case (\ref{eqn:Y_II_2_result}) implies
$$4k\geq m+k-M_{13}+v(\mu).$$

Since $2m>M_{12}+v(\mu)$, (\ref{eqn:Y_II_1_result}) becomes
$$u^2\equiv-2\mu(z_y\pm\sqrt{z_y^2-1})\quad(\text{mod }\varpi^{2m-\frac{M_{12}+M_{13}}{2}})$$
assuming $\left(\frac{z_y^2-1}{F}\right)=1$. Otherwise, this part of the integral vanishes.

By Lemma \ref{lemma:square_root_II_positive}, the leading coefficient of $-2\mu(z_y\pm\sqrt{z_y^2-1})$ is the same as that of $-2\mu$. Therefore solutions do not exist unless $\left(\frac{-2\mu}{F}\right)=1$.

On the other hand, suppose the criterion is met, then $u$ will be determined in quadruple, up to translation by $\varpi^{2m-\frac{M_{12}+M_{13}}{2}}\mathcal{O}_F$.

We also have $k=v(\mu)=0$. Now $m$ has to satisfy
$$2M_{13}\geq 2m>M_{12}.$$
the contribution from this part to the integral is
\begin{align}
\frac{1}{4}\left(\left(\frac{-2\mu}{F}\right)+1\right)\left(\left(\frac{z_y^2-1}{F}\right)+1\right)\sum\limits_{m=\lfloor M_{12}/2\rfloor+1}^{M_{13}}q^{\frac{M_{12}+M_{13}}{2}-m}.\label{eqn:integral_II.II.C}
\end{align}
\begin{remark}
It is possible that $M_{12}\geq 2M_{13}$ happens. In this case the sum will be defined as $0$ since the index set is empty.
\end{remark}

\subsubsection{Contribution from $(D)$}
By (\ref{eqn:Y_II_2_result}),
$$4k\geq m+k-M_{13}+v(\mu).$$

Moreover, (\ref{eqn:Y_II_1_result}) simplifies to
$$v(u^2+2\mu z_y)\geq{m-\frac{M_{13}-v(\mu)}{2}}.$$
Recall by Lemma \ref{lemma:square_root_II_positive} $v(z_y)=0$. We discuss two possible orderings of $v(\mu)$ and $m-\frac{M_{13}-v(\mu)}{2}$.

For $m>\frac{M_{13}+v(\mu)}{2}$, solutions exist only when $\left(\frac{-2\mu}{F}\right)=1$. If so, then furthermore we have $k=v(\mu)=0$. Solutions for $u$ are determined pairwise, up to translation by $\varpi^{m-\lfloor\frac{M_{13}}{2}\rfloor}\mathcal{O}_F$.

$m$ satisfies
\begin{align*}
M_{13}\geq & m,\\
M_{12}\geq & 2m,\\
m> & \frac{M_{13}}{2}.
\end{align*}
and this part of the integral contributes
\begin{align}
\frac{1}{4}\left(\left(\frac{-2\mu}{F}\right)+1\right)\left(\min(M_{13},\lfloor\frac{M_{12}}{2}\rfloor)-\lfloor\frac{M_{13}}{2}\rfloor\right)\cdot q^{\lfloor\frac{M_{13}}{2}\rfloor}.\label{eqn:integral_II.II.D1}
\end{align}

For $m\leq\frac{M_{13}+v(\mu)}{2}$, we have
$$2k\geq m-\frac{M_{13}-v(\mu)}{2}.$$

In this case $m$ satisfies
$$M_{13}+v(\mu)\geq 2m>-M_{13}+v(\mu).$$
Furthermore, for any $m$ fixed, $k$ satisfies
$$4m>4k\geq 2m-M_{13}+v(\mu)$$
or equivalently,
$$\lfloor\frac{2m+M_{13}-v(\mu)}{4}\rfloor\geq m-k>0.$$
Thus for each fixed $m$, the contribution from the sum over $k$ is
$$q^{\lfloor(2m+M_{13}-v(\mu))/4\rfloor}-1.$$

Furthermore, we know
$$2M_{13}\geq 2m+M_{13}-v(\mu)>0.$$
Therefore by letting $2l=2m+M_{13}-v(\mu)$, we compute that this part contributes
\begin{align}
\frac{1}{4}\sum_{l=1}^{M_{13}}(q^{\lfloor l/2\rfloor}-1)=\frac{1}{4}\left(2\cdot\frac{q^{\lfloor M_{13}/2\rfloor}-1}{q-1}-1+\left(1+\delta_{M_{13}}(\varpi)\right)q^{\lfloor M_{13}/2\rfloor}-M_{13}\right).\label{eqn:integral_II.II.D2}
\end{align}

The orbital integral can be obtained by plugging the contribution from (\ref{eqn:integral_II.II.A}) -- (\ref{eqn:integral_II.II.D2}) into Proposition \ref{prop:elementary}.
Note omitting the $\frac{1}{4}$ multiplier, the sum of (\ref{eqn:integral_II.II.A}) and (\ref{eqn:integral_II.II.C}) is
\begin{align*}
&\left(\left(\frac{z_y^2-1}{F}\right)+1\right)\left(\left(\frac{-2\mu}{F}\right)+1\right)\sum\limits_{m=\lfloor M_{12}/2\rfloor+1}^{(M_{12}+M_{13})/2}q^{\frac{M_{12}+M_{13}}{2}-m}\\
 = &\left(\left(\frac{z_y^2-1}{F}\right)+1\right)\left(\left(\frac{-2\mu}{F}\right)+1\right)\frac{q^{\lceil M_{13}/2\rceil}-1}{q-1},
\end{align*}
and the sum of (\ref{eqn:integral_II.II.B}) and (\ref{eqn:integral_II.II.D1}) is
$$\left(\left(\frac{-2\mu}{F}\right)+1\right)(\lfloor M_{12}/2\rfloor-\lfloor M_{13}/2\rfloor)q^{\lfloor M_{13}/2\rfloor}.$$

Thus we have
\begin{align*}
O_{\gamma_\mu}(\mathbbm{1}_{\mathcal{S}(\mathcal{O}_F)})=\frac{1}{4}\Big(& \left(\left(\frac{z_y^2-1}{F}\right)+1\right)\cdot\left(\left(\frac{-2\mu}{F}\right)+1\right)\frac{q^{\lceil M_{13}/2\rceil}-1}{q-1}\\ & +\left(\left(\frac{-2\mu}{F}\right)+1\right)(\lfloor M_{12}/2\rfloor-\lfloor M_{13}/2\rfloor)q^{\lfloor M_{13}/2\rfloor}+2\cdot\frac{q^{\lfloor M_{13}/2\rfloor}-1}{q-1}+\\ &\left(1+\delta_{M_{13}}(\varpi)\right)q^{\lfloor M_{13}/2\rfloor}+\delta_{M_{13}}(\mu)-1\Big).
\end{align*}

\qed

\begin{remark}
When $M_{23}>M_{12}>0$, the result and proof are symmetrically analogous to those of Case $(4)$.
\end{remark}

\subsection{Proof of case (5)}
Suppose $M_{12}=M_{13}=M_{23}>0$.

Under this assumption, by Lemma \ref{lemma:unitary_invariant_valuation}, $v(z_y+1)=v(z_y-1)=0$.

\begin{lem}\label{lemma:square_root_II_equality}
Suppose $M_{12}=M_{13}=M_{23}>0$. Then $v(z_y)\geq 0$, and $(z_y\pm\sqrt{z_y^2-1})\in\mathcal{O}_F^\times$ provided $z_y^2-1$ is a square in $F^\times$.
\end{lem}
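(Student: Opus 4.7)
The plan is to deduce both claims from Lemma \ref{lemma:unitary_invariant_valuation} combined with the product identity
$$(z_y+\sqrt{z_y^2-1})(z_y-\sqrt{z_y^2-1})=1.$$
The product identity is the main structural input for the second assertion: it forces the valuations of the two conjugate terms to sum to zero, so it suffices to rule out that either of them has nonzero valuation.

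First I would establish $v(z_y)\geq 0$. Under the assumption $M_{12}=M_{13}=M_{23}>0$ and $\nu=1$, Lemma \ref{lemma:unitary_invariant_valuation} gives $v(z_y+1)=N_{12}\geq 0$ and $v(z_y-1)=N_{23}\geq 0$. If $v(z_y)<0$, then $v(z_y\pm 1)=v(z_y)<0$, a contradiction. Hence $v(z_y)\geq 0$.

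For the second assertion, assume $z_y^2-1\in(F^\times)^2$, and split on $v(z_y)$. If $v(z_y)>0$, then since $z_y-1\equiv -1\pmod{\varpi}$ and $z_y+1\equiv 1\pmod{\varpi}$, we have $N_{12}=N_{23}=0$, so $v(z_y^2-1)=0$ and $\sqrt{z_y^2-1}\in\mathcal{O}_F^\times$; the leading coefficient of $z_y\pm\sqrt{z_y^2-1}$ is then that of $\pm\sqrt{z_y^2-1}$, a unit, so both terms are units. If $v(z_y)=0$, consider first the subcase $N_{12}+N_{23}>0$: then $v(\sqrt{z_y^2-1})=(N_{12}+N_{23})/2>0$, and $z_y\pm\sqrt{z_y^2-1}$ has the same unit leading coefficient as $z_y$.

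The only remaining subcase is $v(z_y)=0$ and $N_{12}=N_{23}=0$, where $\sqrt{z_y^2-1}$ is also a unit. The potential obstacle here is showing the leading coefficients of $z_y$ and $\sqrt{z_y^2-1}$ neither coincide nor are opposite mod $\varpi$; however, any such coincidence would give $z_y^2\equiv z_y^2-1\pmod{\varpi}$, i.e.\ $1\equiv 0\pmod{\varpi}$, which is absurd. Thus $z_y\pm\sqrt{z_y^2-1}\in\mathcal{O}_F^\times$ in every subcase, and the lemma follows. The product identity then provides a consistency check: $v(z_y+\sqrt{z_y^2-1})+v(z_y-\sqrt{z_y^2-1})=0$ is automatic from both being units.
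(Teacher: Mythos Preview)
Your proof is correct and follows essentially the same route as the paper: deduce $v(z_y)\geq 0$ from Lemma~\ref{lemma:unitary_invariant_valuation}, then split on whether $v(z_y)>0$ or $v(z_y)=0$, handling the latter by comparing squares of leading coefficients. One minor remark: your subcase $v(z_y)=0$ with $N_{12}+N_{23}>0$ is actually vacuous, since $M_{12},M_{23}>0$ already forces $N_{12}=N_{23}=0$ (as $\min(M_{i2},N_{i2})=0$); the paper exploits this directly to assert $v(z_y-1)=0$ at the outset, which streamlines the argument slightly.
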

\begin{proof}
By Lemma \ref{lemma:unitary_invariant_valuation}, $v(z_y-1)=0$. Therefore $v(z_y)\geq 0$.

If $v(z_y)>0$ then the second assertion holds because $v(z_y\pm\sqrt{z_y^2-1})=v(\sqrt{z_y^2-1})=0$.

Suppose on the contrary $v(z_y)=0$. Then, the leading coefficients of $z_y$ and $\pm\sqrt{z_y^2-1}$ cannot be equal (by comparing their squares). Thus $v(z_y\pm\sqrt{z_y^2-1})=0$.
\end{proof}

The sum in Proposition \ref{prop:elementary} will be separated into the following divisions:
\begin{enumerate}
\item[$(A)$] $m>k$ and $m+k>M_{12}+v(\mu)$.
\item[$(B)$] $m>k$, $m+k\leq M_{12}+v(\mu)$, and $2m>M_{12}+v(\mu)$.
\item[$(C)$] $m>k$ and $2m\leq M_{12}+v(\mu)$.
\end{enumerate}

\subsubsection{Contribution from $(A)$}

As before
$$u^2\equiv\pm 2\mu\quad(\text{mod }\varpi^{m+k-M_{12}}).$$
Therefore $v(\mu)=0$ and $k=0$.

Note that and by (\ref{eqn:Y_II_1_result}), the orbital integral is nonvanishing only if $\left(\frac{z_y^2-1}{F}\right)=1$. In that case, we have
$$u^2\equiv-2\mu(z_y\pm\sqrt{z_y^2-1})\quad(\text{mod }\varpi^{2m-M_{12}}).$$

Assume this is the case. Then the congruences to solve for here are
\begin{align*}
u^2\equiv\pm 2\mu\qquad & (\text{mod }\varpi^{m-M_{12}}),\\
u^2\equiv-2\mu(z_y\pm\sqrt{z_y^2-1})\qquad & (\text{mod }\varpi^{2m-M_{12}})
\end{align*}

For compatibility, $m$ would make at least one of
$$z_y+1\equiv\pm\sqrt{z_y^2-1}\quad(\text{mod }\varpi^{m-M_{12}}),$$
$$z_y-1\equiv\pm\sqrt{z_y^2-1}\quad(\text{mod }\varpi^{m-M_{12}})$$
holds.

It suffices to show that there does not exist $m$ so that at least one of the congruences above holds.

When $v(z_y)>0$, the square of the leading coefficient on each sides are $1$ and $-1$ respectively. Since $\mathrm{char}\,F\neq 2$, there does not exist a solution for $m$.

Suppose $v(z_y)=0$. Then the square of the leading coefficients for $z_y+1$ and $\pm\sqrt{z_y^2-1}$ are the same as that of $z_y^2+2z_y+1$ and $z_y^2-1$ respectively. If the two were to be equal, we would have $v(z_y+1)>0$, which is a contradiction to Lemma \ref{lemma:unitary_invariant_valuation}. This proves that the first congruence in consideration has no solution.

The argument for the non-existence of a solution $m$ for other congruence is analogous.

Therefore, the contribution from this part is $0$.
\qed

\subsubsection{Contribution from $(B)$}

In this case, (\ref{eqn:Y_II_2_result}) implies
$$4k\geq m+k-M_{12}+v(\mu).$$

By (\ref{eqn:Y_II_1_result}) we have
$$v\left(u^2+2\mu(z_y\pm\sqrt{z_y^2-1})\right)\geq{2m-M_{12}}$$
if $\left(\frac{z_y^2-1}{F}\right)=1$.

There exists a choice of $\mu$ so that $-2\mu(z_y\pm\sqrt{z_y^2-1})$ are both squares (see proof of Lemma \ref{lemma:square_root_II_equality}). Suppose $\mu$ is chosen as above. Then $u$ is determined in quadruples, up to translation by $\varpi^{2m-M_{12}}\mathcal{O}_F$. Furthermore, $k=v(\mu)=0$ with such choice of $\mu$.

Furthermore, $m$ is bounded by
$$2M_{12}\geq 2m>M_{12}.$$

To conclude, this part contributes
\begin{align}
&\frac{1}{4}\left(1+\left(\frac{z_y^2-1}{F}\right)\right)\left(1+\left(\frac{-2\mu(z_y+\sqrt{z_y^2-1})}{F}\right)\right)\sum_{m}q^{M_{12}-m}\notag\\
=&\frac{1}{4}\left(1+\left(\frac{z_y^2-1}{F}\right)\right)\left(1+\left(\frac{-2\mu(z_y+\sqrt{z_y^2-1})}{F}\right)\right)\frac{q^{\lceil M_{12}/2\rceil}-1}{q-1}.\label{eqn:integral_II.III.B}
\end{align}
\qed

\subsubsection{Contribution from $(C)$}

In this case, equation (\ref{eqn:Y_II_2_result}) implies
$$4k\geq m+k-M_{12}+v(\mu).$$

Also, (\ref{eqn:Y_II_1_result}) simplifies to
$$v(u^2+2\mu z_y)\geq{m-\frac{M_{12}-v(\mu)}{2}}.$$

Since $m-\frac{M_{12}-v(\mu)}{2}\leq v(z_y)+v(\mu)$, we have
$$4k\geq 2m-M_{12}+v(\mu).$$

From the computation for (\ref{eqn:integral_I.I.C}) before, we know that the contribution from this part is
\begin{align}
2\cdot\frac{q^{\lceil M_{12}/2\rceil}-1}{q-1}-1+\delta_{M_{12}}(1)q^{\lfloor M_{12}/2\rfloor}-M_{12}\label{eqn:integral_II.III.C}
\end{align}
\vspace{2mm}

The orbital integral is then obtained by plugging (\ref{eqn:integral_II.III.B}) and (\ref{eqn:integral_II.III.C}) into Proposition \ref{prop:elementary}. That is,

\begin{align*}
O_{\gamma_\mu}(\mathbbm{1}_{\mathcal{S}(\mathcal{O}_F)})=\frac{1}{4}\Bigg(\Big( & (1+\left(\frac{z_y^2-1}{F}\Big))\left(1+\left(\frac{-2\mu(z_y+\sqrt{z_y^2-1})}{F}\right)\right)+2\right)\frac{q^{\lceil M_{12}/2\rceil}-1}{q-1}\\
& +\delta_{M_{12}}(1)q^{\lfloor M_{12}/2\rfloor}+\delta_{M_{12}}(\mu)-1\Bigg).
\end{align*}
\qed

\section{Formulae on type III tori}\label{sec:main3}
In this section we consider type III in Lemma \ref{lemma:classification_n=3}. That is, $\gamma\in T_{\varpi}\cup T_{\xi^2\varpi}$.

For such $\gamma$, $\mathfrak{D}(F,G_{1\gamma},G_1)$ is isomorphic to ${\mathbb{Z}}/{2\mathbb{Z}}\times{\mathbb{Z}}/{2\mathbb{Z}}$ and the parameter $\mu$ will be chosen in $\{1,\xi^2,\varpi,\xi^2\varpi\}$ by Corollary \ref{coro:rational_orbits}.

The eigenvalues of $\gamma$, written in terms of its coordinates, are $\lambda_1=x+{\nu}^{1/2}y$, $\lambda_2=z$, and $\lambda_3=x-{\nu}^{1/2}y$. Note that in particular $\lambda_1$ and $\lambda_3$ lies in $E({\nu}^{1/2})$.

As a quadratic extension over $E$, we know $E({\nu}^{1/2})=E(\sqrt{\varpi})$ and will simply write $E_\circ=E(\sqrt{\varpi})$ for either one of $\nu$. Nevertheless, we will extend $\theta$ to $\mathrm{Aut}\left(E(\sqrt{\varpi})\right)$ differently, according to the parameter $\nu$.

In either case, we will denote by $\theta_\nu\in\mathrm{Aut}\,\left(E(\sqrt{\varpi})\right)$ the involution that extends $\theta$ on $E$ such that ${\nu}^{1/2}$ is fixed. Particularly, the invariant subfield of $E(\sqrt{\varpi})$ under such involution is $F({\nu}^{1/2})$. We will denote this field as $F_\nu$. Note that $F_\nu$ is a ramified quadratic extension of $F$ in either case.

Then the stabilizer of $\gamma$ in $G$ is isomorphic to $\mathrm{Res}\,_{F_\nu/F}\mathrm{U}_{E_\circ/F_\nu}\times\mathrm{U}_{E/F}$ over $F$ canonically via the map
$$\begin{psmatrix}
x & & y\\
& z & \\
\nu y & & x
\end{psmatrix}\mapsto (x+\nu^{1/2}y,z).$$

The condition $\gamma\in\mathrm{U}_3(F)$ implies that the eigenvalues satisfies $\lambda_1,\lambda_3\in\mathrm{U}_{1,E_\circ/F_\nu}(F_\nu)$, and $\lambda_2\in\mathrm{U}_{1,E/F}(F)$. In particular, since $v(\nu)=1$, we have $v(x)=0$ and $v(y)\in\mathbb{Z}_{\geq 0}$.

For compatibility, the valuation on $E_\circ$ will be normalized so that $v_{E_\circ}(\varpi)=1$ throughout (that is, the uniformizer $\varpi^{1/2}$ of $E_\circ$ will have fractional valuation $\frac{1}{2}$). Different from the previous types, the invariant $M_{13}=v(y)+\frac{1}{2}$ is a positive half-integer here.

\begin{lem}
We have
$$M_{13}\geq M_{12}=M_{23}\geq 0.$$
Furthermore, we have $M_{12}=M_{23}\in\mathbb{Z}$ unless $M_{13}=M_{12}$.
\end{lem}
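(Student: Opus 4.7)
The strategy is to exploit the decomposition $\lambda_1-\lambda_2=(x-z)+\nu^{1/2}y$ (and its analogue for $\lambda_2-\lambda_3$) together with the Galois action of $\tau\in\mathrm{Gal}(E_\circ/E)$ sending $\nu^{1/2}\mapsto-\nu^{1/2}$. Note that with the normalization $v_{E_\circ}(\varpi)=1$, one has $v(\nu^{1/2})=1/2$, while $v(x-z)$ and $v(y)$ lie in $\mathbb{Z}_{\geq 0}$, since $x,z\in\mathcal{O}_E$ and $y\in\mathcal{O}_E$ (this integrality of $x,y,z$ follows from $\lambda_1,\lambda_3\in\mathrm{U}_{1,E_\circ/F_\nu}(F_\nu)$ and $\lambda_2\in\mathrm{U}_{1,E/F}(F)$ being units, combined with $x=\tfrac12(\lambda_1+\lambda_3)$, $\nu^{1/2}y=\tfrac12(\lambda_1-\lambda_3)$, $z=\lambda_2$).

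First I would establish $M_{12}=M_{23}$. The involution $\tau$ fixes $E$ pointwise and sends $\lambda_1=x+\nu^{1/2}y$ to $x-\nu^{1/2}y=\lambda_3$, while $\lambda_2=z\in E$ is $\tau$-invariant. Since $\tau$ preserves the valuation on $E_\circ$, we deduce
\begin{equation*}
M_{12}=v(\lambda_1-\lambda_2)=v(\tau(\lambda_1-\lambda_2))=v(\lambda_3-\lambda_2)=M_{23}.
\end{equation*}
The bound $M_{13}\geq M_{12}$ then follows from $\lambda_1-\lambda_3=(\lambda_1-\lambda_2)+(\lambda_2-\lambda_3)$, and $M_{12}\geq 0$ follows because $\lambda_1,\lambda_2$ are both units, so their difference is integral.

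For the last assertion I would compute $v(\lambda_1-\lambda_2)$ by the nonarchimedean triangle inequality applied to $\lambda_1-\lambda_2=(x-z)+\nu^{1/2}y$. Since $v(x-z)\in\mathbb{Z}$ while $v(\nu^{1/2}y)=v(y)+\tfrac12\in\mathbb{Z}+\tfrac12$, the two valuations can never be equal. Hence
\begin{equation*}
M_{12}=\min\bigl(v(x-z),\,v(y)+\tfrac12\bigr).
\end{equation*}
On the other hand $M_{13}=v(2\nu^{1/2}y)=v(y)+\tfrac12$. If $v(x-z)\leq v(y)$, then $M_{12}=v(x-z)\in\mathbb{Z}$ and $M_{12}<M_{13}$. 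Otherwise $v(x-z)\geq v(y)+1>v(y)+\tfrac12$, in which case $M_{12}=v(y)+\tfrac12=M_{13}$. This gives precisely the dichotomy that $M_{12}=M_{23}$ is an integer unless it equals $M_{13}$.

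The only mildly delicate point is keeping the normalization of $v$ on $E_\circ$ straight (so that $v(\nu^{1/2})=\tfrac12$ rather than $1$) and verifying integrality of $x,y,z$ from the unitary conditions; everything else reduces to the triangle inequality and a single Galois symmetry.
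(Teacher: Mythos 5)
Your proof is correct and takes essentially the same route as the paper: the decisive point in both is that $v(x-z)\in\mathbb{Z}$ while $v(\nu^{1/2}y)\in\mathbb{Z}+\tfrac{1}{2}$, so no cancellation can occur, giving $M_{12}=M_{23}=\min\bigl(v(x-z),v(y)+\tfrac{1}{2}\bigr)$ and the stated dichotomy. Your use of the Galois automorphism of $E_\circ/E$ to get $M_{12}=M_{23}$ is only a cosmetic variant of the paper's observation that the two differences share the same $x$-part.
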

\begin{proof}
Since ${\nu}^{1/2}$ is of half-integer valuation, any cancellation in the valuations of the difference $\lambda_1-\lambda_2$ or $\lambda_2-\lambda_3$ occur only on $x$ when we write $\lambda_1=x+\nu^{1/2}y$ and $\lambda_3=x-\nu^{1/2}y$. However, those $x$ are identical. Thus $M_{12}=M_{23}$ and the conclusion follows.
\end{proof}

\subsection{Formulae for endoscopic orbital integrals}
\begin{prop}\label{prop:main3}\hfill
\begin{enumerate}
\item[$(1)$] When $M_{13}=\frac{1}{2}$,
$$O_{\gamma_\mu}(\mathbbm{1}_{\mathcal{S}(\mathcal{O}_F)})=\frac{1}{4}.$$
\item[$(2)$] When $M_{13}>M_{12}=0$,
$$O_{\gamma_\mu}(\mathbbm{1}_{\mathcal{S}(\mathcal{O}_F)})=\frac{1}{4}(M_{13}+\frac{1}{2}).$$
\item[$(3)$] When $M_{13}>M_{12}+\frac{1}{2}>\frac{1}{2}$,
\begin{align*}
O_{\gamma_\mu}(\mathbbm{1}_{\mathcal{S}(\mathcal{O}_F)})=\frac{1}{4}\Bigg(&\left(\left(\frac{-\mu\nu z_y}{F}\right)+\left(\frac{-\mu z_y}{F}\right)+4\right)\frac{q^{\lceil\frac{M_{12}}{2}\rceil}-1}{q-1}\\
&+\left(M_{13}-M_{12}-\frac{1}{2}+\delta_{M_{12}}(1)\right)q^{\lfloor\frac{M_{12}}{2}\rfloor}\Bigg).
\end{align*}
\item[$(4)$] When $M_{13}=M_{12}+\frac{1}{2}>\frac{1}{2}$,
\begin{align*}
O_{\gamma_\mu}(\mathbbm{1}_{\mathcal{S}(\mathcal{O}_F)})=\frac{1}{4}\left(\left(\left(\frac{-\mu\nu z_y}{F}\right)+\left(\frac{-\mu z_y}{F}\right)+4\right)\frac{q^{\lceil\frac{M_{12}}{2}\rceil}-1}{q-1}+\delta_{M_{12}}(\varpi)q^{\lfloor\frac{M_{12}}{2}\rfloor}-1\right).
\end{align*}
\item[$(5)$] When $M_{12}=M_{13}=M_{23}>0$,
\begin{align*}
O_{\gamma_\mu}(\mathbbm{1}_{\mathcal{S}(\mathcal{O}_F)})=\frac{1}{4}\left(2\cdot\frac{q^{\lceil\frac{M}{2}\rceil}-1}{q-1}+\delta_{M}(1)q^{\lfloor\frac{M}{2}\rfloor}\right)
\end{align*}
where $M=M_{12}-\frac{1}{2}$.
\end{enumerate}
\end{prop}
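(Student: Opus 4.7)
The plan is to follow the template of the proofs of Propositions \ref{prop:main1} and \ref{prop:main2}. Specializing Proposition \ref{prop:elementary} to type III (where $\#G_{1\gamma}(F) = 4$ by Corollary \ref{coro:cohomology_stabilizer}, $v(\nu) = 1$, $v(x) = 0$ and $M_{13} = v(y) + \tfrac{1}{2}$), I will rewrite the two integrality conditions inside $J_m(u)$ --- after clearing denominators by $4\mu\nu^{-1}$ --- as
\begin{align*}
v\bigl((u^2 + 2\mu z_y\nu^{-1})^2 - 4\mu^2\nu^{-2}(z_y^2 - \nu)\bigr) &\geq 2m - v(y) - 1 + 2v(\mu),\\
v(u^4 - 4\mu^2\nu^{-1}) &\geq m + k - v(y) - 1 + 2v(\mu).
\end{align*}
The feature distinguishing type III from types I and II is that $v(4\mu^2\nu^{-1}) = 2v(\mu) - 1$ is \emph{odd}, while $v(u^4) = 4k$ is always divisible by $4$. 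Hence $v(u^4 - 4\mu^2\nu^{-1}) = \min(4k, 2v(\mu) - 1)$ unambiguously, and the second condition above decouples into the pair of inequalities $m + k \leq v(y)$ and $3k \geq m - v(y) - 1 + 2v(\mu)$. Combined with the identity $v(z_y^2 - \nu) = M_{12} + M_{23} + N_{12} + N_{23} - 2v(y)$ from Lemma \ref{lemma:unitary_invariant_valuation}, the first condition likewise admits a clean dichotomy depending on how $2m - v(y) - 1 + 2v(\mu)$ compares to $v(z_y^2 - \nu)$.

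With this framework I would proceed case by case. In cases (1) and (2), the hypothesis $M_{12} = 0$ combined with the constraint $2k \geq 2m - M_{12} - M_{23} + N_{13}$ from Proposition \ref{prop:elementary} forces $k \geq m$, so the double sum on $m > k$ is empty and the integral reduces to the ``trivial'' contribution coming from $k \geq m$. A direct count in the $v(\nu) = 1$ setting shows this contribution equals $v(y) + 1$ for every $\mu \in \{1, \xi^2, \varpi, \xi^2\varpi\}$, yielding the stated $\frac{1}{4}$ and $\frac{1}{4}(M_{13} + \tfrac{1}{2})$. For cases (3) and (4), where $M_{12} \in \mathbb{Z}_{\geq 1}$ and $v(z_y^2 - \nu)$ is even, I subdivide the $(m,k)$-domain along the lines of Proposition \ref{prop:main2}(3)--(4): within the region $m + k \leq v(y)$, split further according to whether $2m$ lies above or below the threshold coming from $v(z_y^2 - \nu)$, solve the resulting quadratic congruences by Lemma \ref{lemma:square_elementary}, and sum the geometric series that arise. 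The symbols $\bigl(\tfrac{-\mu z_y}{F}\bigr)$ and $\bigl(\tfrac{-\mu\nu z_y}{F}\bigr)$ in the final formulas will emerge as the indicators for the leading coefficients of $-2\mu\nu^{-1}\bigl(z_y \pm \sqrt{z_y^2 - \nu}\,\bigr)$ to be squares in $F^\times$, exactly mirroring Proposition \ref{prop:main2}(3).

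Case (5), where $M_{12} = M_{13}$ is a half-integer, is structurally simpler: the identity above gives $v(z_y^2 - \nu) = 1$, which forces $z_y^2 - \nu \notin (F^\times)^2$ automatically. This collapses the $\pm$ branching of the square-root analysis and explains the absence of quadratic symbols in the formula for this case; the sub-region analogous to $(B)$ in the proof of Proposition \ref{prop:main1}(4) vanishes outright, leaving only the analogue of $(C)$, which yields the single geometric series $\tfrac{q^{\lceil M/2\rceil} - 1}{q - 1}$ in the stated formula.

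The main obstacle I anticipate is the careful bookkeeping of subregion boundaries against the parity constraints arising from the odd valuation $v(\nu) = 1$ (hence $v(4\mu^2\nu^{-1})$ odd) and the half-integer values of $M_{13}$ and, in case (5), $M_{12}$. Many inequalities of the form $2m \geq M_{ij}$ or $4k \geq L$ cross subregion boundaries in subtle ways due to these parities, and each boundary case must be checked against the exact ceiling/floor expressions appearing in the statement. A final cross-check is that the formulas so obtained, when fed into Corollary \ref{cor:kappa3}, produce the stated endoscopic sums; in particular, the $\mu$-independent trivial contribution $v(y) + 1$ cancels upon passage to any nontrivial character $\kappa$.
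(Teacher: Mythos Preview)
Your proposal follows the paper's approach exactly: your decoupling observation (that $v(u^4-4\mu^2\nu^{-1})=\min(4k,\,2v(\mu)-1)$ because the two valuations have different parities) is precisely the content of the paper's verification that region $(A)$ contributes nothing in each case, and the subsequent subdivision into regions $(B)$ and $(C)$, the quadratic-congruence analysis via Lemma~\ref{lemma:square_elementary}, and the identification of the symbols $\bigl(\tfrac{-\mu z_y}{F}\bigr)$, $\bigl(\tfrac{-\mu\nu z_y}{F}\bigr)$ all proceed identically to the paper. One arithmetic slip to correct in execution: clearing the denominator by $4\mu\nu^{-1}$ adds $v(4\mu\nu^{-1})=v(\mu)-1$ to the valuation, not $2v(\mu)-1$, so the right-hand side of your second displayed inequality should be $m+k-v(y)-1+v(\mu)$ and the decoupled bound should read $m+k\leq v(y)+v(\mu)$, matching the paper's region-$(A)$ threshold $m+k>M_{13}+v(\mu)-\tfrac12$.
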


Let $\kappa_\nu\in\mathfrak{D}(F,G_{1\gamma},G_1)^\mathrm{D}\cong{\mathbb{Z}}/{2\mathbb{Z}}\times{\mathbb{Z}}/{2\mathbb{Z}}$ be the unique nontrivial character so that
$$\kappa_\nu(\gamma_{\nu})=1.$$

\begin{cor}\label{cor:kappa3}
Suppose $\gamma\in T_{\varpi}\cup T_{\xi^2\varpi}$. Let $\kappa_s$ be defined as in \S\ref{sec:intro}. Then $SO^\kappa_{\gamma}(\mathbbm{1}_{\mathcal{S}(\mathcal{O}_F)})=0$ unless $\kappa=\kappa_\nu$. When that is the case, the endoscopic orbital integrals are computed as
$$SO^{\kappa_\nu}_{\gamma}(\mathbbm{1}_{\mathcal{S}(\mathcal{O}_F)})=\frac{1}{2}\left(\left(\frac{z_y^2-\nu}{F}\right)+1\right)\frac{q^{\lceil\frac{M_{12}}{2}\rceil}-1}{q-1}$$

\end{cor}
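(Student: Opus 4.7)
The plan is to imitate the proofs of Corollaries \ref{cor:kappa1} and \ref{cor:kappa2}: tabulate the rational orbital integrals $O_{\gamma_\mu}$ case by case from Proposition \ref{prop:main3}, take the character sum $SO^{\kappa_s}_\gamma = \sum_\mu \kappa_s(\mu)\, O_{\gamma_\mu}$ against each of the three nontrivial characters $\kappa_{\xi^2},\kappa_\varpi,\kappa_{\xi^2\varpi}$ of $\mathfrak{D}(F,G_{1\gamma},G_1)\cong (\mathbb{Z}/2\mathbb{Z})^2$, and simplify. The two inputs that do all the combinatorial work are the orthogonality relation $\sum_\mu \kappa_s(\mu)=0$ for nontrivial $\kappa_s$, which kills every $\mu$-independent summand, together with the identity
$$\sum_\mu \kappa_s(\mu)\left(\frac{a\mu}{F}\right)=2\kappa_s([a])\qquad (a\in F^\times,\ \kappa_s\text{ nontrivial}),$$
obtained by noting that $\left(\frac{a\mu}{F}\right)$ equals $+1$ exactly when $[\mu]=[a]$ in $F^\times/(F^\times)^2$ and $-1$ on the remaining three cosets.

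Inspection of Proposition \ref{prop:main3} shows that the formulas for $O_{\gamma_\mu}$ in cases (1), (2), and (5) carry no dependence on $\mu$ at all, so their contribution to $SO^{\kappa_s}_\gamma$ vanishes for every nontrivial $\kappa_s$. Cases (3) and (4) have their $\mu$-dependence concentrated in the pair of quadratic symbols $\left(\tfrac{-\mu\nu z_y}{F}\right)+\left(\tfrac{-\mu z_y}{F}\right)$, whose character sum is, by the identity above,
$$2\kappa_s([-z_y])\bigl(1+\kappa_s([\nu])\bigr).$$
The factor $1+\kappa_s([\nu])$ equals $2$ exactly when $\kappa_s=\kappa_\nu$ (since by definition $\kappa_\nu([\nu])=1$ whereas the other two nontrivial characters send $[\nu]$ to $-1$). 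This already proves the first assertion of the corollary: for $\kappa\ne\kappa_\nu$, every case contributes $0$, so $SO^\kappa_\gamma(\mathbbm{1}_{\mathcal{S}(\mathcal{O}_F)})=0$.

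It remains to evaluate $SO^{\kappa_\nu}_\gamma$. After the preceding cancellations, the only surviving piece comes from cases (3) and (4), where the character sum collapses to $\kappa_\nu([-z_y])\cdot\frac{q^{\lceil M_{12}/2\rceil}-1}{q-1}$. To match this against the stated formula $\frac{1}{2}\bigl(1+\bigl(\tfrac{z_y^2-\nu}{F}\bigr)\bigr)\cdot\frac{q^{\lceil M_{12}/2\rceil}-1}{q-1}$, the key step is the identification
$$\kappa_\nu\bigl([-z_y]\bigr)=\tfrac{1}{2}\bigl(1+\bigl(\tfrac{z_y^2-\nu}{F}\bigr)\bigr)$$
in the regimes forced by cases (3) and (4). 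This is where I expect the real work to sit: one has to combine Lemma \ref{lemma:unitary_invariant_valuation}, which pins down $v(z_y\pm\nu^{1/2})=M_{12}-M_{13}+\tfrac12$ and hence the common leading coefficient of $z_y\pm\nu^{1/2}$, with the unitarity constraints $\mathrm N(x)+\nu\mathrm N(y)=1$, $\Tr(x\bar y)=0$, and $\mathrm N(z)=1$. Together these force the relation $(z_y^2-\nu)\mathrm N(y)=(1-z_x^2)\mathrm N(x)$, and since $\mathrm N(E^\times)$ has index two in $F^\times$, the resulting constraint on the class $[-z_y]\in F^\times/(F^\times)^2$ relative to the kernel $\{[1],[\nu]\}$ of $\kappa_\nu$ is exactly what is needed to produce the quadratic symbol $\bigl(\tfrac{z_y^2-\nu}{F}\bigr)$.

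The principal obstacle is this last arithmetic identification: checking it requires a careful leading-coefficient analysis in $k_F$ and a sign-tracking step whose verification relies on $\gamma\in\mathrm U_3(F)$ (and not merely on $\gamma\in\mathrm{GL}_3(\bar F)$). Case (3) and case (4) differ in whether $v(z_y)<0$ or $v(z_y)=0$, and each has to be handled separately, but in both regimes the constraint that $z_y^2-\nu$ is the norm of $z_y+\nu^{1/2}\in F_\nu$ should allow the desired matching. Once this identification is in place, the formula in the corollary drops out by substitution, and putting together the contributions from all cases (with the vanishing of cases (1), (2), (5) serving as consistency checks) completes the proof.
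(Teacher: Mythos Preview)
Your overall approach is exactly the paper's: take character sums of the formulae in Proposition \ref{prop:main3}, observe that cases (1), (2), (5) are $\mu$-independent, and reduce everything to the two quadratic symbols in cases (3) and (4). The paper's entire proof is the one-line assertion
\[
\frac{1}{4}\sum_{\mu}\kappa_s(\mu)\Bigl(\Bigl(\tfrac{-\mu\nu z_y}{F}\Bigr)+\Bigl(\tfrac{-\mu z_y}{F}\Bigr)+2\Bigr)=\begin{cases}1&s=\nu,\\0&\text{else},\end{cases}
\]
and your character-sum computation shows the left side is $\kappa_\nu([-z_y])$ when $s=\nu$. So you have correctly isolated the one nontrivial input, namely $\kappa_\nu([-z_y])=1$ in cases (3) and (4), which the paper simply asserts without argument. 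In that sense your write-up is already more explicit than the paper's.

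Two corrections to your handling of that step. First, the ``identification'' $\kappa_\nu([-z_y])=\tfrac12\bigl(1+(\tfrac{z_y^2-\nu}{F})\bigr)$ is ill-posed as stated: the left side lies in $\{\pm1\}$ and the right in $\{0,1\}$. What you actually need is the conjunction of $(\tfrac{z_y^2-\nu}{F})=1$, which is already Lemmas \ref{lemma:square_root_III_negative} and \ref{lemma:square_root_III_equality}, and $\kappa_\nu([-z_y])=1$. Second, the norm relation you invoke is correct (one checks from the formulae in the proof of Lemma \ref{lemma:unitary_invariant_valuation} that $(z_y^2-\nu)y^2=(z_x^2-1)x^2$, and since $\Tr(x\bar y)=0$ forces $x/y\in\xi F$ one has $x^2/y^2=-\mathrm N(x)/\mathrm N(y)$, giving your identity), but it constrains the square class of $z_y^2-\nu$, not of $-z_y$ itself. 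So the route you sketch does not obviously close the gap; a more direct argument pinning down the leading coefficient of $-z_y$ modulo $\ker\kappa_\nu=\{[1],[\nu]\}$ is still required.
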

\begin{proof}
It follows directly from Proposition \ref{prop:main3}. Note that
$$\frac{1}{4}\sum_{\mu}{\kappa_s(\mu)}(\left(\left(\frac{-\mu\nu z_y}{F}\right)+\left(\frac{-\mu z_y}{F}\right)+2\right)=\begin{cases}
1 & \text{if }s=\nu\\
0 & \text{otherwise}.
\end{cases}.$$
\end{proof}

We now begin the proof of Proposition \ref{prop:main3}.
\vspace{2mm}

Recall equations inside the characteristic function in Proposition \ref{prop:elementary} imply that for the integral to be nonzero, it takes
\begin{align}
v(\mu+z_yu^2-\frac{1}{4}\mu^{-1}\nu u^4) & \geq 2m-M_{13}+\frac{1}{2},\label{eqn:Y_1-general}\\
v(\mu-\frac{1}{4}\mu^{-1}\nu u^4) & \geq{m+k-M_{13}+\frac{1}{2}}.\label{eqn:Y_2-general}
\end{align}

\subsection{Proof of case (1)}
Suppose $M_{13}=\frac{1}{2}$.

In this case $y\in\mathcal{O}_E^\times$, and we have
$$u^4\equiv 4\mu^2\nu^{-1}\quad(\text{mod }\varpi^{m+k+v(\mu)-1}).$$
A solution $u\in F$ for this equation does not exist because ${\nu}^{1/2}\notin F$.

Thus $O_{\gamma_{\mu}}(\mathbbm{1}_{\mathcal{S}(\mathcal{O}_F)})=M_{13}+\frac{1}{2}=1$.
\qed

Assume for the rest of the section that $M_{13}>\frac{1}{2}$.

Then (\ref{eqn:Y_1-general}) can be written as
\begin{align}
v\left((u^2+2\nu^{-1}\mu z_y)^2-4\mu^2\nu^{-2}(z_y^2-\nu)\right)\geq{2m}-M_{13}+v(\mu)-\frac{1}{2}.\label{eqn:Y_1-III_result}
\end{align}
Analogously, (\ref{eqn:Y_2-general}) can be written as
\begin{align}
v(u^4-4\mu^2\nu^{-1})\geq{m+k}-M_{13}+v(\mu)-\frac{1}{2}.\label{eqn:Y_2-III_result}
\end{align}

By Lemma \ref{lemma:unitary_invariant_valuation}, we know that
$$v(z_y^2-\nu)=N_{12}+M_{12}+N_{23}+M_{23}-2M_{13}+1.$$

As seen before in \S\ref{sec:main1}, the possible cases for $M_{ij}$ are:
\begin{itemize}
\item $M_{12}=M_{23}=0$, or
\item both $M_{12}$ and $M_{23}$ are positive, and $N_{12}=N_{13}=0$.
\end{itemize}

\subsection{Proof of case (2)}
Suppose $M_{13}>M_{12}=0$.

Then $k$ so that $2m>2k\geq 2m-M_{12}-M_{23}$ does not exist, and
$$O_{\gamma_\mu}(\mathbbm{1}_{\mathcal{S}(\mathcal{O}_F)})=M_{13}+\frac{1}{2}$$
by Proposition \ref{prop:elementary}.\qed

\subsection{Proof of case (3)}
Suppose $M_{13}>M_{12}+\frac{1}{2}>\frac{1}{2}$.

Under the assumption, we have $v(z_y^2-\nu)=2M_{12}-2M_{13}+1<0$ and therefore $v(z_y)=M_{12}-M_{13}+\frac{1}{2}<0$. We now state a lemma that is analogous to Lemma \ref{lemma:square_root_I_negative}.
\begin{lem}\label{lemma:square_root_III_negative}
If $M_{13}>M_{12}+\frac{1}{2}>\frac{1}{2}$, $z_y^2-\nu$ is a square in $F^\times$. Moreover, $z_y\pm\sqrt{z_y^2-\nu}$ have valuations $M_{12}-M_{13}+\frac{1}{2}$ and $M_{13}-M_{12}+\frac{1}{2}$ respectively.
\end{lem}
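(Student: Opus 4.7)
The plan is to mimic the strategy of Lemmas \ref{lemma:square_root_I_negative} and \ref{lemma:square_root_II_negative}, but keeping careful track of the parity of valuations, which behaves differently here because $v(\nu)=1$ and $M_{13}\in\mathbb{Z}+\tfrac12$ while $M_{12}\in\mathbb{Z}$ (under the hypothesis $M_{13}>M_{12}+\tfrac12$).

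First I would establish that $z_y^2-\nu\in(F^\times)^2$. By Lemma \ref{lemma:unitary_invariant_valuation}, under the standing hypothesis we have $N_{12}=N_{23}=0$ and $M_{12}=M_{23}$, so
\[
v(z_y+\nu^{1/2})=v(z_y-\nu^{1/2})=M_{12}-M_{13}+\tfrac12<0.
\]
Summing yields $v(z_y^2-\nu)=2(M_{12}-M_{13})+1$. A parity check shows this is an even integer (since $2M_{13}$ is odd while $2M_{12}$ is even), so in particular $v(z_y^2-\nu)<v(\nu)=1$. Writing $z_y^2-\nu=z_y^2(1-\nu z_y^{-2})$, the factor $1-\nu z_y^{-2}$ is a unit reducing to $1$ modulo $\varpi$, hence is a square in $\mathcal{O}_F^\times$ by Hensel's lemma; and $z_y^2$ is obviously a square. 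Thus $z_y^2-\nu\in(F^\times)^2$, so $\sqrt{z_y^2-\nu}\in F^\times$ makes sense.

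Next I would compute the two valuations. From $v(z_y^2-\nu)=2(M_{12}-M_{13})+1$ we get
\[
v\!\left(\sqrt{z_y^2-\nu}\right)=M_{12}-M_{13}+\tfrac12=v(z_y).
\]
Using the factorization
\[
\left(z_y+\sqrt{z_y^2-\nu}\right)\!\left(z_y-\sqrt{z_y^2-\nu}\right)=\nu,
\]
the two valuations $v\!\left(z_y\pm\sqrt{z_y^2-\nu}\right)$ are non-negative (relative to $v(z_y)$, in the sense that each is $\geq v(z_y)$) and sum to $v(\nu)=1$. On the other hand, the identity $2z_y=(z_y+\sqrt{z_y^2-\nu})+(z_y-\sqrt{z_y^2-\nu})$ together with $\mathrm{char}\,F\neq 2$ forces at least one summand to have valuation equal to $v(z_y)=M_{12}-M_{13}+\tfrac12$. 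The other then has valuation $1-\bigl(M_{12}-M_{13}+\tfrac12\bigr)=M_{13}-M_{12}+\tfrac12$, completing the lemma.

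The only potential obstacle is the parity bookkeeping: one must verify that $v(z_y^2-\nu)$, even though odd-looking in the $E_\circ$-normalization, lands in $2\mathbb{Z}$ when restricted to $F$, so that the square root actually exists. This is precisely the step where the present case diverges from the analogous statements in Sections \ref{sec:main1} and \ref{sec:main2}, and it is handled by the parity observation above.
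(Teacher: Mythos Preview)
Your proof is correct and follows essentially the same approach as the paper's: both argue that $z_y^2-\nu$ has the same leading term as $z_y^2$ (you via the factorization $z_y^2(1-\nu z_y^{-2})$ and Hensel, the paper by simply noting the leading coefficient agrees), and both use the product identity $(z_y+\sqrt{z_y^2-\nu})(z_y-\sqrt{z_y^2-\nu})=\nu$ together with $\mathrm{char}\,F\neq 2$ for the valuation split. Your explicit parity check on $v(z_y^2-\nu)$ is a bit more than needed---since $z_y\in F$ one has $v(z_y^2-\nu)=v(z_y^2)=2v(z_y)\in 2\mathbb{Z}$ automatically---but it does no harm.
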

\begin{proof}\hfill
For the first assertion, it suffices to note that $z_y^2-\nu\in F$ and the leading coefficient of $z_y^2-\nu$ is the same as that of $z_y^2$ by omitting terms of higher valuation.

Given the existence of $\sqrt{z_y^2-\nu}$, we have $v(\sqrt{z_y^2-\nu})=v(z_y)$. Since $\mathrm{char}\,F\neq 2$, at least one of the above must has valuation $M_{12}-M_{13}+\frac{1}{2}$.

For the last assertion, notice that
$$(z_y+\sqrt{z_y^2-\nu})(z_y-\sqrt{z_y^2-\nu})=\nu$$
and so $v(z_y+\sqrt{z_y^2-\eta})+v(z_y-\sqrt{z_y^2-\eta})=1$.
\end{proof}

We will separate the discussion into three cases:
\begin{enumerate}
\item[$(A)$] $m>k$ and $m+k>M_{13}+v(\mu)-\frac{1}{2}$.
\item[$(B)$] $m>k$, $m+k\leq M_{13}+v(\mu)-\frac{1}{2}$, and $2m>2M_{12}-M_{13}+v(\mu)-\frac{1}{2}$.
\item[$(C)$] $m>k$ and $2m\leq 2M_{12}-M_{13}+v(\mu)-\frac{1}{2}$.
\end{enumerate}

\subsubsection{Contribution from $(A)$}

As before, in this case (\ref{eqn:Y_2-III_result}) has no solution $u\in F$ because ${\nu}$ is not a square in $F^\times$. Therefore this part does not contribute in the orbital integral.

\subsubsection{Contribution from $(B)$}

From (\ref{eqn:Y_2-III_result}) we derive that
$$4k\geq m+k-M_{13}+v(\mu)-\frac{1}{2}.$$

Consider (\ref{eqn:Y_1-III_result}). When $2m>2M_{12}-M_{13}+v(\mu)-\frac{1}{2}$, we have
$$v\left(u^2+2\mu\nu^{-1}(z_y\pm\sqrt{z_y^2-\nu})\right)\geq{2m-M_{12}}.$$

For convenience, we fix the choice of square root $\sqrt{z_y^2-\nu}$ so that $z_y+\sqrt{z_y^2-\nu}$ has valuation $M_{12}-M_{13}+\frac{1}{2}$ (see Lemma \ref{lemma:square_root_III_negative}).

Now, consider the equation
$$v\left(u^2+2\mu\nu^{-1}(z_y+\sqrt{z_y^2-\nu})\right)\geq 2m-M_{12}.$$
Note that in particular
$$v\left(2\mu\nu^{-1}(z_y+\sqrt{z_y^2-\nu})\right)=M_{12}-M_{13}+v(\mu)-\frac{1}{2}<2m-M_{12}.$$
So we have
$$u^2=-2\mu\nu^{-1}(z_y+\sqrt{z_y^2-\nu})\pmod{\varpi^{2m-M_{12}}}.$$

The leading coefficient and valuation of $-2\mu\nu^{-1}(z_y+\sqrt{z_y^2-\nu})$ is the same as those of $-4\nu^{-1}\mu z_y$. Thus a solution for $u$ exists only when $\mu$ is chosen so that $\left(\frac{-\mu\nu z_y}{F}\right)=1$.

Suppose that $\mu$ is chosen so. Then $$k=\frac{1}{2}\left(M_{12}-M_{13}+v(\mu)-\frac{1}{2}\right)$$
and $u$ is determined pairwise, up to translation by $\varpi^{2m-M_{12}-k}\mathcal{O}_F$.

$m$ satisfies
\begin{align*}
3k+M_{13}-v(\mu)+\frac{1}{2} \geq & m,\\
M_{13}-k+v(\mu)-\frac{1}{2} \geq & m,\\
k+M_{12} \geq & m,\\
& m>k,\\
& 2m > -M_{13}+v(\mu)-\frac{1}{2},\\
& 2m > 2M_{12}-M_{13}+v(\mu)-\frac{1}{2}.
\end{align*}
When plug in $k$, we notice that the first inequality implies both the second and the third. Likewise, the last inequality implies both the fourth and the fifth.

Thus with a proper substitution,
$$2k+2M_{12}\geq 2m > 2k+M_{12}.$$

Since $u$ is determined in pairs up to $\varpi^{2m-M_{12}-k}\mathcal{O}_F$, the contribution from this part to the orbital integral is 
\begin{align}
\left(\left(\frac{-\mu\nu z_y}{F}\right)+1\right)\sum_{m}q^{k-m+M_{12}} & =\left(\left(\frac{-\mu\nu z_y}{F}\right)+1\right)\frac{q^{\lceil\frac{M_{12}}{2}\rceil}-1}{q-1}\label{eqn:integral_III.I.B1}
\end{align}
because
$$\frac{M_{12}}{2}>k-m+M_{12}\geq 0.$$

On the other hand, if we consider
$$v\left(u^2+2\nu^{-1}\mu(z_y-\sqrt{z_y^2-\nu})\right)\geq{2m-M_{12}},$$
then the discussion would depend upon the ordering of
$$M_{13}-M_{12}+v(\mu)-\frac{1}{2}\qquad\text{ and }\qquad 2m-M_{12}.$$

Suppose $2m>M_{13}+v(\mu)-\frac{1}{2}$. Then
$$v(2\nu^{-1}\mu(z_y-\sqrt{z_y^2-\nu})\geq{2m+M_{12}}.$$

Since the leading coefficient of $z_y-\sqrt{z_y^2-\eta}$ is the same as that of $\frac{\nu}{2z_y}$ (see the proof of Lemma \ref{lemma:square_root_III_negative}), we have a solution for $u$ only when $\left(\frac{-\mu z_y}{F}\right)=1$. If that is the case, then we know $k=\frac{1}{2}\left(M_{13}-M_{12}+v(\mu)-\frac{1}{2})\right)$ and $u$ is determined pairwise, up to $\varpi^{2m-M_{12}-k}\mathcal{O}_F$.

The inequalities for $m$ to satisfy again simplifies to
$$2k+2M_{12}\geq 2m > 2k+M_{12}$$
and this part contributes
\begin{align}
\left(1+\left(\frac{-\mu z_y}{F}\right)\right)\frac{q^{\lceil\frac{M_{12}-2}{2}\rceil}-1}{q-1}\label{eqn:integral_III.I.B2}
\end{align}
to the orbital integral.

For the situation when $2m\leq M_{13}+v(\mu)-\frac{1}{2}$, we have
$$2k\geq 2m-M_{12}.$$

$m$ satisfies
$$M_{13}+v(\mu)-\frac{1}{2}\geq 2m>2M_{12}-M_{13}+v(\mu)-\frac{1}{2}.$$
For any $m$ fixed, we have
$$2m>2k\geq 2m-M_{12}.$$

As a result, this part of the computation sums up to
\begin{align}
\sum_{m}\sum_{k=m-\lfloor\frac{M_{12}}{2}\rfloor}^{m-1}q^{m-k}(1-q^{-1}) & =\sum_{m}(q^{\lfloor\frac{M_{12}}{2}\rfloor}-1)\notag\\
& =\left(M_{13}-M_{12}-\frac{1}{2}\right)(q^{\lfloor\frac{M_{12}}{2}\rfloor}-1).\label{eqn:integral_III.I.B3}
\end{align}

\subsubsection{Contribution from $(C)$}

From (\ref{eqn:Y_2-III_result}) we derive that
$$4k\geq m+k-M_{13}+v(\mu)-\frac{1}{2}.$$

We assumed $2m\leq 2M_{12}-M_{13}+v(\mu)$, so (\ref{eqn:Y_1-III_result}) becomes
$$2v(u^2+2\mu\nu^{-1} z_y)\geq 2m-M_{13}+v(\mu)-\frac{1}{2}.$$
Or equivalently,
$$4k\geq 2m-M_{13}+v(\mu)-\frac{1}{2}.$$

$m$ satisfies
$$2M_{12}-M_{13}+v(\mu)-\frac{1}{2}\geq 2m>-M_{13}+v(\mu)-\frac{1}{2}.$$

For fixed $m$, we have
\begin{align*}
m> & k\\
M_{13}-m+v(\mu)-\frac{1}{2}\geq & k\\
& k\geq m-M_{12}\\
& 3k\geq m-M_{13}+v(\mu)-\frac{1}{2}\\
& 4k\geq 2m-M_{13}+v(\mu)-\frac{1}{2}.
\end{align*}
That is, $k$ runs over
$$4m>4k\geq 2m-M_{13}+v(\mu)-\frac{1}{2}.$$

So this part of the computation sums to
\begin{align}
\sum_{m}\sum_{k}q^{m-k}(1-q^{-1}) =&\sum_{m}(q^{\lfloor\frac{2m+M_{13}-v(\mu)+1/2}{4}\rfloor}-1)\notag\\
=&\sum_{l}q^{\lfloor\frac{l}{2}\rfloor}-M_{12}\notag\\
=&2\cdot\frac{q^{\lceil M_{12}/2\rceil}-1}{q-1}-1+\delta_{M_{12}}(1)q^{\lfloor M_{12}/2\rfloor}-M_{12}.\label{eqn:integral_III.I.C}
\end{align}
Note that the index $l$ in the sum ranges over $M_{12}\geq l>0$.

\vspace{2mm}

Now we can substitute (\ref{eqn:integral_III.I.B1}) -- (\ref{eqn:integral_III.I.C}) into Proposition \ref{prop:elementary}.

This gives

\begin{align*}
O_{\gamma_\mu}(\mathbbm{1}_{\mathcal{S}(\mathcal{O}_F)})=\frac{1}{4}\Bigg(&\left(\left(\frac{-\mu\nu z_y}{F}\right)+\left(\frac{-\mu z_y}{F}\right)+4\right)\frac{q^{\lceil\frac{M_{12}}{2}\rceil}-1}{q-1}\\
&+\left(M_{13}-M_{12}-\frac{1}{2}+\delta_{M_{12}}(1)\right)q^{\lfloor\frac{M_{12}}{2}\rfloor}\Bigg).
\end{align*}
\qed

\subsection{Proof of case (4)}
Suppose $M_{13}=M_{12}+\frac{1}{2}>\frac{1}{2}$.

\begin{lem}\label{lemma:square_root_III_equality}
Suppose $M_{13}=M_{12}+\frac{1}{2}>\frac{1}{2}$. Then
\begin{itemize}
\item $v(z_y)=0$, and
\item $z_y^2-\nu\in (F^\times)^2$.
\end{itemize}
Moreover, $z_y\pm\sqrt{z_y^2-\nu}$ has valuation $0$ and $1$ respectively.
\end{lem}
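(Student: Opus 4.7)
The plan is to invoke Lemma \ref{lemma:unitary_invariant_valuation} to compute the valuations $v(z_y \pm \nu^{1/2})$ directly, and then read off each of the three bullets in order. Under the hypothesis $M_{13} = M_{12} + \tfrac{1}{2} > \tfrac{1}{2}$ we have $M_{12} = M_{23} > 0$, so the trichotomy recorded earlier in the section (coming from $(\lambda_i+\lambda_2)-(\lambda_i-\lambda_2) = 2z \in \mathcal{O}_E^\times$) forces $N_{12} = N_{23} = 0$. Plugging $v(\nu)=1$ into Lemma \ref{lemma:unitary_invariant_valuation} then gives
\begin{align*}
v(z_y + \nu^{1/2}) &= M_{23} + N_{12} - M_{13} + \tfrac{1}{2} = 0,\\
v(z_y - \nu^{1/2}) &= M_{12} + N_{23} - M_{13} + \tfrac{1}{2} = 0.
\end{align*}

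For the first bullet, $v(\nu^{1/2}) = \tfrac{1}{2} > 0$ while $v(z_y + \nu^{1/2}) = 0$, so the ultrametric inequality forces $v(z_y) = 0$. For the second bullet, adding the two displayed valuations gives $v(z_y^2 - \nu) = 0$, and $z_y^2 - \nu \equiv z_y^2 \pmod{\varpi}$ reduces to a nonzero square in $k_F^\times$; since $\mathrm{char}\, k_F \neq 2$, Hensel's lemma promotes this to $z_y^2 - \nu \in (F^\times)^2$.

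Finally, choose the sign of $\sqrt{z_y^2 - \nu}$ so that $\sqrt{z_y^2 - \nu} \equiv z_y \pmod{\varpi}$; this is possible since $(\sqrt{z_y^2-\nu})^2 \equiv z_y^2 \pmod{\varpi}$ and $\mathrm{char}\, k_F \neq 2$. Then $v(z_y - \sqrt{z_y^2 - \nu}) \geq 1$, and the identity
\[
(z_y - \sqrt{z_y^2 - \nu})(z_y + \sqrt{z_y^2 - \nu}) = \nu,
\]
together with $v(\nu) = 1$, forces $v(z_y - \sqrt{z_y^2-\nu}) = 1$ and $v(z_y + \sqrt{z_y^2-\nu}) = 0$, matching the stated sign convention. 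No step is a genuine obstacle; the only thing to watch is the bookkeeping of half-integer valuations in $F_\nu$ and the fact that the earlier trichotomy on $M_{ij}$ and $N_{ij}$ applies unchanged in the type III setting because $\lambda_2 \in \mathcal{O}_E^\times$.
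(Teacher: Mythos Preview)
Your proof is correct and follows essentially the same approach as the paper: both invoke Lemma~\ref{lemma:unitary_invariant_valuation} to obtain $v(z_y \pm \nu^{1/2})=0$ (the paper computes only one of these), deduce $v(z_y)=0$, observe that $z_y^2-\nu$ has unit valuation with square leading coefficient, and then use the product identity $(z_y+\sqrt{z_y^2-\nu})(z_y-\sqrt{z_y^2-\nu})=\nu$ to pin down the two valuations as $0$ and $1$. Your version is slightly more explicit in justifying $N_{12}=N_{23}=0$ and in choosing the sign of the square root, but the substance is the same.
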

\begin{proof}
By Lemma \ref{lemma:unitary_invariant_valuation}, $v(z_y-\nu^{1/2})=0$. Therefore $v(z_y)=0$. The leading coefficient of $z_y^2-\nu$ is the same as that of $z_y^2$, which is a square in $k_F^\times$.

Since $v(\sqrt{z_y^2-\nu})\geq 0$, at least one of $z_y\pm\sqrt{z_y^2-\nu}$ is a unit. Then, since
$$(z_y+\sqrt{z_y^2-\nu})(z_y-\sqrt{z_y^2-\nu})=\nu$$
we know that the other term has valuation $1$.
\end{proof}

The sum in Proposition \ref{prop:elementary} will be separated into the following divisions:
\begin{enumerate}
\item[$(A)$] $m>k$ and $m+k>M_{12}+v(\mu)$.
\item[$(B)$] $m>k$, $m+k\leq M_{12}+v(\mu)$, and $2m>M_{12}+v(\mu)-1$.
\item[$(C)$] $m>k$ and $2m\leq M_{12}+v(\mu)-1$.
\end{enumerate}

\subsubsection{Contribution from $(A)$}

As before, the contribution from this part is $0$.

\subsubsection{Contribution from $(B)$}

In this case, (\ref{eqn:Y_2-III_result}) implies
$$4k\geq m+k-M_{12}+v(\mu)-1.$$

By (\ref{eqn:Y_1-III_result}) we have
$$v\left(u^2+2\mu\nu^{-1}(z_y\pm\sqrt{z_y^2-\nu})\right)\geq{2m-M_{12}}.$$

For convenience, we will fix $\sqrt{z_y^2-\nu}$ so that $v(z_y+\sqrt{z_y^2-\nu})=0$.

There exists a choice of $\mu$ so that $-2\mu\nu^{-1}(z_y+\sqrt{z_y^2-1})$ is a square. Suppose $\mu$ is chosen as above. Then $u$ is determined in pairs, up to translation by $\varpi^{2m-M_{12}}\mathcal{O}_F$. Furthermore, $k=0$ and $v(\mu)=1$ with such choice of $\mu$.

Furthermore, $m$ is bounded by
$$2M_{12}\geq 2m>M_{12}.$$

Also, note that
$$\left(\frac{-2\mu(z_y+\sqrt{z_y^2-\nu})}{F}\right)=\left(\frac{-\mu z_y}{F}\right)$$
So this part contributes
\begin{align}
\frac{1}{4}\left(1+\left(\frac{-\mu z_y}{F}\right)\right)\frac{q^{\lceil M_{12}/2\rceil}-1}{q-1}.\label{eqn:integral_III.II.B1}
\end{align}

On the other hand, to solve for
$$u^2=-2\mu\nu^{-1}(z_y-\sqrt{z_y^2-\nu})\pmod{\varpi^{2m-M_{12}}}.$$
we have to consider the special case when $2m=M_{12}+v(\mu)$. When this happens,
$$2m>2k\geq 2m-M_{12}.$$
So the contribution from this part is
\begin{align}
\frac{1}{4}\left(q^{\lfloor\frac{M_{12}}{2}\rfloor}-1\right).\label{eqn:integral_III.II.B2}
\end{align}

let $\nu$ be chosen so that $-2\mu\nu^{-1}(z_y-\sqrt{z_y^2-1})$ is a square. Then $u$ is again determined in pairs up to translation by $\varpi^{2m-M_{12}}\mathcal{O}_F$, and we have $k=v(\mu)=0$.

$m$ is bounded by
$$2M_{12}\geq 2m>M_{12}$$
and this part contributes
\begin{align}
\frac{1}{4}\left(1+\left(\frac{-\mu\nu z_y}{F}\right)\right)\frac{q^{\lceil M_{12}/2\rceil}-1}{q-1}.\label{eqn:integral_III.II.B3}
\end{align}

\subsubsection{Contribution from $(C)$}

In this case, (\ref{eqn:Y_2-III_result}) implies
$$4k\geq m+k-M_{12}+v(\mu)-1.$$

Also, (\ref{eqn:Y_1-III_result}) simplifies to
$$v(u^2+2\mu\nu^{-1} z_y)\geq{m-\frac{M_{12}-v(\mu)+1}{2}}.$$

By assumption, we have
$$4k\geq 2m-M_{12}+v(\mu)-1.$$

The bounds for $m$ are
$$M_{12}+v(\mu)+1\geq 2m>-M_{12}+v(\mu)+1.$$
For each $m$, $k$ satisfies
$$4m>4k\geq 2m-M_{12}+v(\mu)-1.$$

As a result, this part of the sum contributes
\begin{align}
\sum_{m}\sum_{k}q^{m-k}(1-q^{-1}) = &\sum_{m}(q^{\lfloor\frac{2m+M_{12}-v(\mu)+1}{4}\rfloor}-1)\notag\\
= &\sum_{l=1}^{M_{12}}q^{\lfloor\frac{l}{2}\rfloor}-M_{12}\notag\\
= &2\cdot\frac{q^{\lceil M_{12}/2\rceil}-1}{q-1}-1+\delta_{M_{12}}(1)q^{\lfloor M_{12}/2\rfloor}-M_{12}\label{eqn:integral_III.II.C}
\end{align}
where we apply the change of variable $2l=2m+M_{12}-v(\mu)+1$.
\vspace{2mm}

The orbital integral is then obtained by plugging (\ref{eqn:integral_III.II.B1}) -- (\ref{eqn:integral_II.III.C}) into Proposition \ref{prop:elementary}. That is,

\begin{align*}
O_{\gamma_\mu}(\mathbbm{1}_{\mathcal{S}(\mathcal{O}_F)})=\frac{1}{4}\left(\left(\left(\frac{-\mu\nu z_y}{F}\right)+\left(\frac{-\mu z_y}{F}\right)+4\right)\frac{q^{\lceil\frac{M_{12}}{2}\rceil}-1}{q-1}+\delta_{M_{12}}(\varpi)q^{\lfloor\frac{M_{12}}{2}\rfloor}-1\right).
\end{align*}
\qed

\subsection{Proof of case (5)}
Suppose $M_{12}=M_{13}=M_{23}>0$.

\begin{lem}\label{lemma:square_root_III_positive}
Suppose $M_{13}=M_{12}>0$. Then
\begin{itemize}
\item $v(z_y)\geq 1$, and
\item $\left(\frac{z_y^2-\nu}{F}\right)=-1$.
\end{itemize}
\end{lem}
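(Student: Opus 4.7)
The plan is to extract the two desired facts from Lemma \ref{lemma:unitary_invariant_valuation} together with the constraint $v(\nu)=1$ and the unitarity of the eigenvalues, following the same pattern as in the analogous lemmas (\ref{lemma:square_root_I_equality}, \ref{lemma:square_root_II_equality}) of the earlier sections.

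First I would show $N_{12}=N_{23}=0$. Since $\lambda_2=z$ satisfies $\lambda_2\overline{\lambda_2}=1$ (and, in particular, $v(2\lambda_2)=0$), we have $\min(N_{i2},M_{i2})=0$ for $i=1,3$ from the identity $(\lambda_i+\lambda_2)-(\lambda_i-\lambda_2)=2\lambda_2$. Under the hypothesis $M_{12}=M_{13}>0$ (which forces $M_{23}=M_{12}>0$ by the preceding lemma), both $M_{12}$ and $M_{23}$ are positive, so $N_{12}=N_{23}=0$.

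Next I would compute $v(z_y^2-\nu)$ using Lemma \ref{lemma:unitary_invariant_valuation}, which gives
\begin{align*}
v(z_y^2-\nu)=N_{12}+M_{12}+N_{23}+M_{23}-2M_{13}+v(\nu).
\end{align*}
Plugging in $N_{12}=N_{23}=0$, $M_{12}=M_{23}=M_{13}$, and $v(\nu)=1$ yields $v(z_y^2-\nu)=1$. Since $z_y\in F$, this valuation is computed in $F$, and being odd it immediately forces $z_y^2-\nu\notin (F^\times)^2$, so $\left(\tfrac{z_y^2-\nu}{F}\right)=-1$.

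For the remaining assertion $v(z_y)\geq 1$, observe that $v(z_y^2)=2v(z_y)$ is an even (nonnegative or zero) integer, while $v(\nu)=1$. If we had $v(z_y)=0$, then $v(z_y^2)=0\neq v(\nu)$, and the strong triangle inequality would give $v(z_y^2-\nu)=\min(0,1)=0$, contradicting the computation above. Hence $v(z_y^2)\geq 2$, i.e., $v(z_y)\geq 1$. No step here is substantive beyond careful bookkeeping; the only subtlety is ensuring $N_{12}$ and $N_{23}$ vanish, which is the ingredient that distinguishes type III behavior (with $v(\nu)=1$) from the integer-valued situations treated in \S\ref{sec:main1} and \S\ref{sec:main2}.
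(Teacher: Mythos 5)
Your argument is correct and in substance follows the paper's own route: both bullets are read off from the valuation identities of Lemma \ref{lemma:unitary_invariant_valuation}, using that $v(\nu)=1$ while $v(z_y)\in\mathbb{Z}$. The paper works with a single factor, observing that $v(z_y-\nu^{1/2})=M_{12}+N_{23}-M_{13}+\tfrac12 v(\nu)$ is a positive half-integer (with $N_{23}=0$ this is $\tfrac12$; the ``$=1$'' printed in the paper's proof is a slip, harmless for the conclusion), which forces $v(z_y)\geq 1$ since $z_y\in F$, and then $v(z_y^2-\nu)=v(\nu)=1$ is odd, giving the symbol $-1$; you instead compute $v(z_y^2-\nu)=1$ first and deduce both statements from it, after checking $N_{12}=N_{23}=0$ explicitly (the paper has this from the dichotomy recorded at the start of \S\ref{sec:main3}). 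The one point to tighten is your parenthetical assertion that $v(z_y^2)$ is nonnegative: this is not known a priori (it fails, for instance, in the regime of Lemma \ref{lemma:square_root_III_negative}), so you must also rule out $v(z_y)<0$; but that is immediate by the very same ultrametric comparison, since $v(z_y)<0$ would give $v(z_y^2-\nu)=2v(z_y)<0$, contradicting $v(z_y^2-\nu)=1$. With that one-line addition your proof is complete and matches the paper's.
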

\begin{proof}
It suffices to note that $v(z_y-\nu^{1/2})=1$ by Lemma \ref{lemma:unitary_invariant_valuation}. Therefore $v(z_y)\geq 1$ because $z_y\in F$.
\end{proof}

For convenience, we will write $M=M_{12}-\frac{1}{2}\in\mathbb{Z}$ for this part. The sum in Proposition \ref{prop:elementary} will be separated into the following divisions:
\begin{enumerate}
\item[$(A)$] $m>k$ and $m+k>M+v(\mu)$.
\item[$(B)$] $m>k$, $m+k\leq M+v(\mu)$, and $2m>M+v(\mu)$.
\item[$(C)$] $m>k$ and $2m\leq M+v(\mu)$.
\end{enumerate}
\subsubsection{Contribution from $(A)$}

As before, the contribution from this part is $0$.

\subsubsection{Contribution from $(B)$}

In this case, (\ref{eqn:Y_1-III_result}) has no solution since $\left(\frac{z_y^2-\nu}{F}\right)=-1$ by Lemma \ref{lemma:square_root_III_positive}.

\subsubsection{Contribution from $(C)$}

In this case, (\ref{eqn:Y_2-III_result}) implies
$$4k\geq m+k-M+v(\mu).$$

Similarly, (\ref{eqn:Y_1-III_result}) simplifies to
$$4k\geq 2m-M+v(\mu).$$

The bounds for $m$ are
$$M+v(\mu)\geq 2m>-M+v(\mu).$$
For each $m$, $k$ satisfies
$$4m>4k\geq 2m-M+v(\mu).$$

As a result, this part of the sum contributes
\begin{align}
\sum_{m}\sum_{k}q^{m-k}(1-q^{-1}) = &\sum_{m}(q^{\lfloor\frac{2m+M-v(\mu)}{4}\rfloor})\notag\\
= &\sum_{l=1}^{M}q^{\lfloor\frac{l}{2}\rfloor}-M\notag\\
= &2\cdot\frac{q^{\lceil \frac{M}{2}\rceil}-1}{q-1}+\delta_{M}(1)q^{\lfloor M/2\rfloor}-M-1\label{eqn:integral_III.IV.C}
\end{align}
where we let $2l=2m+M_{12}-v(\mu)+\frac{1}{2}$.
\vspace{2mm}

The orbital integral is then obtained if we substitute (\ref{eqn:integral_III.IV.C}) into Proposition \ref{prop:elementary}. That is,

\begin{align*}
O_{\gamma_\mu}(\mathbbm{1}_{\mathcal{S}(\mathcal{O}_F)})=\frac{1}{4}\left(2\cdot\frac{q^{\lceil\frac{M}{2}\rceil}-1}{q-1}+\delta_{M}(1)q^{\lfloor\frac{M}{2}\rfloor}\right).
\end{align*}
\qed

\bibliography{refs}{}
\bibliographystyle{alpha}

\end{document}